\documentclass[12pt,fleqn]{article}
\usepackage[english]{babel}
\usepackage{makeidx}
\usepackage{latexsym,amsfonts,amssymb,amsmath,longtable,amsthm}
\input amssym.def
\usepackage{latexsym}
\usepackage[all]{xy}
\usepackage{amsfonts}
\usepackage{amsmath}
\usepackage{mathrsfs}
\usepackage{color}
\usepackage{ulem}
\usepackage[table]{xcolor}
\usepackage[unicode,breaklinks=true,colorlinks=true]{hyperref}
\unitlength1cm \textwidth16cm \oddsidemargin0.1cm
\evensidemargin0.1cm

\setlength{\textheight}{217mm}

\setlength{\topmargin}{-7mm}

\catcode`@=11
\newbox\tr@tto
\setbox\tr@tto=\hbox{{\count0=0\dimen0=-
,9pt\dimen1=1,1pt\loop\ifnum\count0<11 \advance \count0 by1 \vrule
width.51pt height\dimen1
     depth\dimen0\kern-0.17pt\advance\dimen0 by-
0.05pt\advance\dimen1
     by0.1pt\repeat \loop\ifnum\count0<21\advance \count0 by1
\vrule
     width.6pt height\dimen1 depth\dimen0\kern-0.2pt
\advance\dimen0
     by-0.1pt\advance\dimen1 by 0.05pt\repeat}}
\def\medint{\displaystyle\copy\tr@tto\kern-10.4pt\int}
\catcode`@=12

\def\Xint#1{\mathchoice
   {\XXint\displaystyle\textstyle{#1}}%
   {\XXint\textstyle\scriptstyle{#1}}%
   {\XXint\scriptstyle\scriptscriptstyle{#1}}%
   {\XXint\scriptscriptstyle\scriptscriptstyle{#1}}%
   \!\int}
\def\XXint#1#2#3{{\setbox0=\hbox{$#1{#2#3}{\int}$}
     \vcenter{\hbox{$#2#3$}}\kern-.5\wd0}}

\def\dashint{\Xint-}

\newcommand{\R}{{\mathbb R}}
\renewcommand{\C}{{\mathbb C}}

\newcommand{\N}{{\mathbb N}}

\newcommand{\e}{{\varepsilon}}
\newcommand{\ep}{{\epsilon}}
\renewcommand{\th}{{\theta}}
\newcommand{\al}{{\alpha}}
\newcommand{\Le}{{\mathscr L}}
\newcommand{\GG}{{\mathscr G}}

\newcommand{\Be}{{\mathcal  B}}
\newcommand{\Me}{{\mathscr  M}}
\renewcommand{\H}{{\mathcal H}}
\newcommand{\rank}{{\mathrm{rank\,}}}

\newcommand{\p}{{p_\circ}}

\renewcommand{\b}{{q_\circ}}

\renewcommand{\t}{\tau}
\renewcommand{\tt}{\tau_*}

\newcommand{\loc}{{\rm loc}}

\newcommand{\meas}{\mathop{\rm meas}}
\newcommand{\diam}{\mathop{\rm diam}}

\newcommand{\LL}{\mathrm{L}}
\newcommand{\WW}{\mathrm{W}}

\newcommand{\CC}{\mathrm{C}}

\newcommand{\Cc}{C^{k,\alpha}}
\newcommand{\BBB}{B^{k+\al}_{p,\infty}(\R^n,\R^d)}

\newcommand{\LK}{\Le^{\al}_{p}}
\newcommand{\LLL}{\Le^{\al}_{p,1}}

\newtheorem{ttt}{\bf Theorem}[section]

\newtheorem{lem}{\bf Lemma}[section]
\newtheorem{cor}{\bf Corollary}[section]
\newtheorem{df}{\bf Definition}[section]
\theoremstyle{remark}
\newtheorem{rem}{\bf Remark}[section]

\numberwithin{equation}{section}

\newcommand{\dd}{{\rm d}}

\title{ON LUZIN $N$-PROPERTY  AND UNCERTAINTY PRINCIPLE FOR THE SOBOLEV MAPPINGS}
\author{Adele Ferone, \, Mikhail V.~Korobkov,  \, and \, Alba Roviello}

\begin{document}

\maketitle

\begin{abstract}
We say that a mapping $v:\R^n\to\R^d$ satisfies the $(\tau,\sigma)$--N--property, 
if $\H^\sigma(v(E))=0$ whenever $\H^\tau(E)=0$, where $\H^\tau$ means the Hausdorff measure. 
We prove that every mapping~$v$ of Sobolev class $W^k_p(\R^n,\R^d)$ with $kp>n$ satisfies $(\tau,\sigma)$--N--property for every  $0<\tau\ne\tau_*:=n-(\alpha-1) p$
with $$\sigma=\sigma(\tau):=\left\{\begin{array}{lcr}\tau, \ \ &\mbox{\rm if \ }& \tau>\tau_*;\\ [13pt]
\frac{p\,\tau}{\alpha p-n+\tau}, \ \ &\mbox{\rm if \ }& 0<\tau <\tau_*.\end{array}\right.$$

We prove also, that for $k>1$ and for the critical value $\tau=\tau_*$ the corresponding $(\tau,\sigma)$--N--property fails in general. 
Nevertheless, this $(\tau,\sigma)$--N--property holds for $\tau=\tau_*$ if we assume in addition that the highest derivatives $\nabla^k v$ belong to the 
Lorentz space $L_{p,1}(\R^n)$ instead of~$L_p$. 

We extend these results to the case of fractional Sobolev spaces and for the Besov spaces as well. 
Also, we establish  some Fubini type theorems for $N$-properties and discuss their applications to the Morse--Sard theorem and its recent extensions.

\medskip

\noindent {\bf Key words:} {\it Sobolev--Lorentz mappings, fractional Sobolev classes, Luzin $N$--property, Morse--Sard theorem,
Hausdorff measure}
\end{abstract}


\section{Introduction}\label{Introd}

The classical Luzin $N$-property means that for a mapping $f:\R^n\to\R^n$ one has $\meas f(E)=0$ whenever $\meas E=0$. (Here 
$\meas E$ is the usual $n$-dimensional Lebesgue measure.) 

This property plays one of the crucial roles  in the classical real analysis and differentiation theory (see, e.g., \cite{Saks}\,). 
It is very useful also in elasticity theory and in geometrical analysis, especially in the theory of quasiconformal mappings and, more generally, in the theory of 
mappings with bounded distortions, i.e., mappings $f:\Omega\subset\R^n\to\R^n$ of Sobolev class $W^1_n(\R^n)$ 
such that $\|f'(x)\|^n\le K\det f'(x)$ almost everywhere with some constant $K\in[1,+\infty)$. 
The notion of mappings with bounded distortion was introduced by Yu.G.~Reshetnyak (see, e.g., his classical books
\cite{Resh1}, \cite{Resh2}, \cite{GR}\,). He proved that they satisfy the $N$-property and 
this was very helpful in his subsequent proofs of other basic topological 
properties of such mappings (openness, discreteness and etc.). 

The notion of mappings with bounded distortion leads to the theory of more general mappings with finite distortion 
(i.e., when $K$ in the above definition could depend on~$x$  and is not assumed  
to be uniformly bounded; see, e.g., one of the~pioneering papers \cite{VodG} where the monotonicity, continuity and $N$--property of such mappings from the~class~$W^1_n$ were established).  
This theory has been 
intensively developed in the last decades (see, e.g., the book~\cite{HenclBook} for the actual overviews), and studying the~$N$-property constitutes one of the most important direction (see, e.g., 
\cite{KauphMich} and the more recent paper \cite{Onofrio2016}). 

Note that the belonging of a mapping to the Sobolev class $W^1_n(\R^n,\R^n)$ is crucial for $N$-properties. 
Indeed, every mapping of class $W^1_p(\R^n,\R^n)$ with $p>n$ is continuous and supports the $N$-property (it is a simple consequence of the Morrey inequality). But even if a mapping 
$f\in W^1_n(\R^n,\R^n)$ is continuous (which is not guarantied in general), it may have no $N$-property. On the other hand, the~$N$-property holds for functions 
of the~class  $W^1_n(\R^n,\R^n)$ under some additional  assumptions on its topological features, namely, for homeomorphic and open mappings (see~\cite{Resh-op}), for quasi-monotone\footnote{Some of these results were generalised for the more delicate case of Carnot groups and manifolds, see, e.g.,~\cite{Vod-N}.} mappings (see \cite{VodG}, \cite{MalyMartio}\,), etc. 

The above results are very delicate and sharp: indeed, for any $p<n$ there are homeomorphisms $f\in W^1_p(\R^n,\R^n)$ without~$N$-property. This phenomena was discovered by S.P.~Ponomarev \cite{Pon1}.
In the last years his construction has been very refined and it was constructed an example of Sobolev homeomorphism with zero Jacobian a.e. which belongs simultaneously to all the 
classes $W^1_p(\R^n,\R^n)$ with $p<n$  (\,see \cite{Hencl2011} and \cite{Cherny2011}\,)~--- of course,
this "strange" homeomorphism certainly fails to have the~$N$-property\footnote{Moreover, even the examples of bi-Sobolev homeomorphisms of class
$W^1_p(\R^n,\R^n)$, $p<n-1$, with zero Jacobian a.e. were constructed recently, see, 
e.g., \cite{HenclOnofrio2014}, \cite{Cherny2015}. Such homeomorphisms are impossible in the Sobolev class~$W^1_{n-1}(\R^n,\R^n)$.
Furthermore, in \cite{Hencl16-Arch} authors constructed an example of Sobolev homeomorphism $f\in W^1_1((0,1)^n,\R^n)$ such that the Jacobian \,$\det f'(x)$\, changes 
its sign on the sets of positive measures. 
}.  

In the positive direction, it was proved in~\cite{KauhMaly} (see also~\cite{rom1}\,), that every mapping of the Sobolev--Lorentz class $W^1_{n,1}(\R^n,\R^n)$ (i.e., 
its distributional derivatives belong to the Lorentz space
$L_{n,1}$, see section~\ref{prel} for the exact definitions) satisfies the~$N$-property. Note that this space  $W^1_{n,1}(\R^n,\R^n)$ is limiting in a natural sense between 
classes $W^1_n$ and $W^1_p$ with $p>n$. 

Another direction is to study the~$N$-properties with respect to Hausdorff (instead of Lebesgue) measures. One of the most elegant results was achieved for the class of 
plane quasiconformal mappings.

The famous area distortion theorem of K. Astala \cite{AstalaActa1994} implies the following dimension distortion result: if $f:\C\to\C$ is a $K$-quasiconformal mapping (i.e., it is a plane 
homeomorphic mapping with $K$-bounded distortion) and $E$ is a compact set of Hausdorff dimension $t\in(0,2)$, then the image $f(E)$ has Hausdorff dimension at most $t'=\frac{2Kt}{2+(K-1)t}$. This estimate is sharp; however, it leaves open the endpoint case: does $\H^t(E)=0$ imply $\H^{t'}(f(E))=0$? The remarkable paper~\cite{Lacey-Acta2-10}
gives an affirmative answer to Astala's conjecture (see also \cite{As_Riesz2013} where the further implication $H^t(E)<\infty\Rightarrow\H^{t'}(f(E))<\infty$ was considered). 

Let us go to the results which are more close to the~present paper. It is more
natural to discuss the topic in the scale of fractional Sobolev
spaces, i.e., for {\it(Bessel)-potential space} $\LK$ with
$\al>0$. Recall, that function $v:\R^n\to\R^d$ belongs to the
space $\LK$, if it is a convolution of the Bessel
kernel~$K_{\alpha}$ with a function~$g\in L_p(\R^n)$, where
$\widehat{K_\alpha}(\xi)=(1+4\pi^2\xi^2)^{-\alpha/2}$. It is well known that
\begin{equation*}\label{fN1} \LK(\R^n)=W^\al_p(\R^n)\qquad\mbox{ if }\ \al\in\N\quad\mbox{and}\quad 1<p<\infty.
\end{equation*}

In the recent paper~~\cite{H-H} \ Hencl H. and Honz\'{\i}k P.  proved, in particular, the following assertion: 

\begin{ttt}[see \cite{H-H}]
\label{HH1}{\sl Let $n,d\in\N$, $\alpha>0$, $p>1$, $\al p>n$, and $0<\tau\le n$. Suppose that a mapping~$f:\R^n\to\R^d$ belongs to the (fractional) Sobolev class 
$\LK$. Then for any set $E\subset\R^n$ with Hausdorff dimension~$\dim_HE\le \tau$ the inequality
$\dim_Hf(E)\le\sigma(\tau)$ holds, where 
\begin{equation}\label{np1}
\sigma(\tau):=\left\{\begin{array}{lcr}\tau, \ \ &\mbox{\rm if \ }& \tau\ge\tau_*:=n-(\alpha-1) p;\\ [13pt]
\frac{p\,\tau}{\alpha p-n+\tau}, \ \ &\mbox{\rm if \ }& 0<\tau <\tau_*.\end{array}\right.
\end{equation}
 }
\end{ttt}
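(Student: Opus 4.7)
The plan combines a local oscillation estimate for $f$ (derived from its Bessel-potential representation) with a Vitali-type covering argument, treating the two regimes $\tau<\tau_*$ and $\tau>\tau_*$ by different ingredients.

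Write $f=K_\alpha\ast g$ with $g\in L_p(\R^n)$. For a ball $B=B(x_0,r)$ with $r\le 1$, split the convolution representing $f$ into a near-field part over $|z-x_0|\le 2r$ and a far-field part. H\"older's inequality applied to the near field, combined with the local $L_{p'}$-integrability of the Bessel kernel (which holds precisely because $\alpha p>n$ and gives $\|K_\alpha\|_{L_{p'}(B(0,3r))}\le Cr^{\alpha-n/p}$), produces the oscillation estimate
\[
\mathop{\mathrm{osc}}_{B(x_0,r)} f\ \le\ C\,r^{\alpha-n/p}\,\|g\|_{L_p(2B)}\ +\ (\text{far-field correction}).
\]

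For $\tau<\tau_*$, fix $\eta>0$ small. Since $\dim_H E\le\tau$, for every $\delta>0$ one can cover $E$ by balls $\{B(x_i,r_i)\}$ with $r_i<\delta$ and $\sum_i r_i^{\tau+\eta}<\delta$, where a Vitali selection makes $\{2B_i\}$ have bounded overlap. For the target $\sigma<p$, apply H\"older's inequality with conjugate exponents $p/\sigma$ and $p/(p-\sigma)$:
\[
\sum_i r_i^{\sigma(\alpha-n/p)}\|g\|_{L_p(2B_i)}^\sigma\ \le\ \Bigl(\sum_i r_i^{\sigma(\alpha-n/p)\,p/(p-\sigma)}\Bigr)^{(p-\sigma)/p}\Bigl(\sum_i\|g\|_{L_p(2B_i)}^p\Bigr)^{\sigma/p}.
\]
The second factor is $\le C\|g\|_p^\sigma$ by bounded overlap; matching $\sigma(\alpha p-n)/(p-\sigma)=\tau+\eta$ forces $\sigma=p(\tau+\eta)/(\alpha p-n+\tau+\eta)$, and then the first factor is $\le \delta^{(p-\sigma)/p}$. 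Sending $\delta\to 0$ then $\eta\to 0$ yields $\dim_H f(E)\le p\tau/(\alpha p-n+\tau)$, which is exactly the claim in the regime $\tau\le\tau_*$.

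For $\tau>\tau_*$, the previous argument alone does not give the desired $\sigma=\tau$ (the far-field term obstructs the gain), and one must invoke approximate differentiability. By the capacity theory for Bessel potentials, $\nabla f=K_{\alpha-1}\ast g$ (suitably interpreted) is quasi-continuous outside a set $Z$ of $(\alpha-1,p)$-capacity zero, and such capacity-null sets have Hausdorff dimension at most $\tau_*=n-(\alpha-1)p$. At every point of $E\setminus Z$, $f$ therefore admits a first-order Taylor expansion with remainder $o(|y-x|)$. Decompose $E=(E\cap Z)\cup(E\setminus Z)$: since $\dim_H(E\cap Z)\le\tau_*$, the previous covering step applied to $E\cap Z$ gives $\dim_H f(E\cap Z)\le\tau_*<\tau$; on $E\setminus Z$ stratify by the sets $\{|\nabla f(x)|\le M\}$, on each stratum $f$ is locally Lipschitz and hence dimension-preserving, so $\dim_H f(E\setminus Z)\le\tau$. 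Combining, $\dim_H f(E)\le\tau$.

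The main obstacle is this capacity-theoretic step in the regime $\tau>\tau_*$: one must quantitatively relate $(\alpha-1,p)$-Bessel capacity to Hausdorff dimension and upgrade the approximate differentiability of $f$ on $E\setminus Z$ into a bona fide local Lipschitz property along which the image dimension is preserved. The exponent $\tau_*=n-(\alpha-1)p$ arises precisely because $\nabla f$ is a Bessel potential of order $\alpha-1$ with density in $L_p$.
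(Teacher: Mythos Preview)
Your near-field/H\"older argument for $\tau<\tau_*$ has a genuine gap: you write the oscillation as $Cr^{\alpha-n/p}\|g\|_{L_p(2B)}+(\text{far-field correction})$ and then silently drop the correction when you sum. That term cannot be absorbed by the same mechanism. A crude bound on the far field (via $|\nabla K_\alpha|\lesssim |z|^{\alpha-n-1}$ and global H\"older) gives only $Cr^{\alpha-n/p}\|g\|_{L_p(\R^n)}$, so after raising to the power $\sigma$ and summing one is left with $\|g\|_p^\sigma\sum_i r_i^{\sigma(\alpha-n/p)}$; with your choice $\sigma=p(\tau+\eta)/(\alpha p-n+\tau+\eta)$ one has $\sigma(\alpha-n/p)<\tau+\eta$, and this sum does \emph{not} tend to zero. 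In other words, the localisation $\|g\|_{L_p(2B_i)}$ in the near field is exactly what gives the improved exponent, and the far field carries no such localisation.

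The paper (which proves the stronger $(\tau,\sigma)$-$N$-property, Theorem~\ref{LPT1}, rather than the bare dimension statement you are after) handles this by replacing the naive far-field bound with a pointwise estimate in terms of the fractional maximal function $M_{\alpha-1+\theta}g$ (Lemma~\ref{est-potent2}) or the Riesz potential $I_{\alpha-1}|g|$ (Lemma~\ref{est-potent1}), and then invoking Adams' trace inequalities (Theorems~\ref{AM1}--\ref{AM2}) for measures supported on $\tau$-regular families of dyadic cubes. That trace-theoretic step is the missing idea in your undercritical argument.

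Your supercritical strategy via differentiability outside a set $Z$ of dimension $\le\tau_*$ is a legitimate alternative to the paper's direct approach (the paper again uses Adams' trace theorem, now with exponent $s=\tau p/\tau_*>p$, and no capacity theory). The countable-Lipschitz decomposition on the differentiability set (Federer~3.1.8) does give $\dim_H f(E\setminus Z)\le\tau$, so the ``locally Lipschitz'' step you flag as an obstacle is in fact standard. However, your treatment of $E\cap Z$ appeals to the undercritical bound near $\tau_*$, and since $\dim_H Z$ may equal $\tau_*$ you really need that bound at (or arbitrarily close to) the critical value---so the far-field gap above propagates here too. Once the undercritical case is repaired via the Adams machinery, your differentiability route for $\tau>\tau_*$ is a valid, and arguably more conceptual, alternative to the paper's uniform covering argument.
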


But as above (see the discussion around the~Astala theorem), this result raises a natural question. What happens in the limiting case, 
i.e., is it true that $\H^\tau(E)=0$ implies $\H^{\sigma(\tau)}(f(E))=0$? 
Of course, such $N$-property is much more precise and stronger than the assertion of Theorem~\ref{HH1}.

 Six years ago G. Alberti et al. announced \cite{Alb2} (see also \cite{Alb}) the positive answer to this question, i.e., they announced the validity of the following theorem.

\begin{ttt}
\label{AK1}{\sl Let $k, n,d\in\N$, $p>1$, $k p>n$, and $0<\tau\le n$. Suppose that a mapping~$f:\R^n\to\R^d$ belongs to the  Sobolev class 
$W^k_p$ and $\tau\ne\tau_*=n-(k-1)p$. Then $f$ has the~$(\tau,\sigma)$-$N$-property, where the value $\sigma=\sigma(\tau)$ is defined in~(\ref{np1}).
 }
\end{ttt}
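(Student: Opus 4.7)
The plan is to establish the $(\tau,\sigma)$-N-property via a multi-scale decomposition of the domain, exploiting the interplay between the pointwise Taylor polynomial of $f$ and a fractional maximal function controlling the Taylor remainder. Recall that $W^k_p = \Ll$ for $k\in\N$, so in the notation of the formula $\sigma(\tau)$ we have $\alpha=k$.

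The starting point is the pointwise Taylor inequality: for $f \in W^k_p$ with $kp>n$, for $\H^\tau$-a.e.\ $x \in \R^n$,
$$
|f(y) - f(x)|
\;\le\; \sum_{j=1}^{k-1} \frac{|\nabla^j f(x)|}{j!}\,|y-x|^j
\;+\; C\,|y-x|^k \,M^\sharp\!\left(|\nabla^k f|\right)(x),
$$
where $M^\sharp$ is a suitable (fractional) maximal function. The $(k{-}1)$-st Taylor polynomial of $f$ exists at $x$ in a strong pointwise sense via Calder\'on's differentiability theorem, and the remainder bound comes from the classical maximal-function estimate for the Taylor remainder in the $W^k_p$ scale.

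Next, I would introduce dyadic thresholds $\vec\lambda=(\lambda_1,\dots,\lambda_k)$ and partition the domain into level sets
$$
A_{\vec\lambda} \;=\; \bigl\{x : |\nabla^j f(x)| \le \lambda_j\;(1 \le j \le k-1),\ M^\sharp(|\nabla^k f|)(x) \le \lambda_k\bigr\}.
$$
On each $A_{\vec\lambda}$ the pointwise bound becomes uniform, so $f(B(x,r) \cap A_{\vec\lambda})$ is contained in a ball of radius $\rho(r;\vec\lambda) := \sum_{j=1}^{k-1}\lambda_j r^j + \lambda_k r^k$. Starting from a Hausdorff $\tau$-cover of $E \cap A_{\vec\lambda}$ by balls $B(x_i,r_i)$ with $\sum r_i^\tau < \ve$, the image is covered by balls of radii $\rho(r_i;\vec\lambda)$, and one must control $\sum_i \rho(r_i;\vec\lambda)^{\sigma}$ uniformly, then sum the resulting bounds over the countable collection of $\vec\lambda$.

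The exponent $\sigma(\tau)$ in (\ref{np1}) arises from optimizing this sum. Using $\nabla^j f \in W^{k-j}_p$ together with strong-type bounds on $M^\sharp$ controls how much mass the sets $A_{\vec\lambda}$ can carry, while the contribution of $\R^n \setminus \bigcup_{\vec\lambda} A_{\vec\lambda}$ (points where the maximal function of $\nabla^k f$ is infinite) is eliminated by Sobolev capacity estimates, which show it has $\H^{\tau_*}$-measure zero. For $\tau > \tau_*$, the linear term $\lambda_1 r$ dominates $\rho$ at the relevant scales, giving $\sigma=\tau$; for $\tau<\tau_*$, the top-order term $\lambda_k r^k$ dominates, and balancing the $L^p$-bound on $\nabla^k f$ against the cover condition $\sum r_i^\tau<\ve$ is an elementary optimization that produces the ratio $p\tau/(\alpha p-n+\tau)$.

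The main obstacle is precisely the excluded critical value $\tau=\tau_*$, where the two regimes meet and the optimization becomes borderline: the required bound is at the endpoint of a strong-type-$(p,p)$ maximal inequality, and $\nabla^k f\in L_p$ is just barely insufficient. This is the structural reason the theorem excludes $\tau=\tau_*$ and why the Lorentz assumption $\nabla^k v \in \LLL$ rescues the endpoint in the companion statement. A second technical point that must be handled with care is verifying that the Taylor polynomial exists $\H^\tau$-a.e.\ in the full range $\tau<\tau_*$ where the linear Lipschitz reduction no longer applies; this rests on a refined Calder\'on--Zygmund analysis at the Hausdorff scale.
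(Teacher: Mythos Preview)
Your strategy is viable but takes a genuinely different route from the paper. You propose a pointwise Taylor-remainder inequality together with a level-set stratification in the thresholds $\vec\lambda$; this is essentially the approach announced (but never published) by Alberti et al.\ in \cite{Alb2,Alb}. The paper instead works entirely through the Bessel-potential representation $v=K_\alpha*g$: it splits $v=v_1+v_2$ with $v_1$ smooth and $\|g\|_{L_p}$ small for $v_2$, proves the diameter bounds of Lemmas~\ref{est-potent1}--\ref{est-potent2} expressing $\diam v(Q)$ via $\|Mg\|_{L_p(Q)}$ and either $I_{\alpha-1}|g|$ or $M_{\alpha-1+\theta}g$, and then sums over a \emph{regular} family of dyadic cubes, invoking Adams' trace inequalities (Theorems~\ref{AM1}--\ref{AM2}) to close the estimate. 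The potential-theoretic route buys an immediate extension to fractional $\alpha\notin\N$ and a clean mechanism for the Lorentz endpoint (replace Theorem~\ref{AM1} by Theorem~\ref{lb7}); your route is closer to classical Morse--Sard arguments and avoids Adams' machinery, but is tied to integer $k$ through the Taylor expansion.

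Two points in your sketch need tightening before it becomes a proof. First, the pointwise remainder bound as you wrote it, with $M^\sharp(|\nabla^k f|)$ evaluated only at $x$, is not quite available; the standard Bojarski--Haj\l asz inequality has the maximal function at \emph{both} $x$ and $y$, which forces you to intersect with level sets at both endpoints when covering. Second, your heuristic that ``for $\tau<\tau_*$ the top-order term $\lambda_k r^k$ dominates'' is backwards at face value (for small $r$ the linear term is largest), and the phrase ``elementary optimization'' hides the real work: one must sum over the dyadic thresholds $\lambda_k\sim 2^m$, weight by the Hausdorff $\tau$-content of $\{M|\nabla^k f|\sim 2^m\}\cap E$, and balance against $\|\nabla^k f\|_{L_p}^p$ via the weak-type bound. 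This computation does produce $\sigma=p\tau/(kp-n+\tau)$, but it is precisely where the argument becomes delicate and where $\tau=\tau_*$ fails, so it should be written out rather than asserted.
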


Here for convenience we use the following notation: a mapping~$f:\R^n\to\R^d$ is said to satisfy the $(\tau,\sigma)$-$N$-property, if $\H^\sigma(f(E))=0$ whenever $\H^\tau(E)=0$, \ $E\subset\R^n$. 

Let us note two things concerning this result. First of all, in announcements \cite{Alb2}--\cite{Alb} authors left the limiting case $\tau=\tau_*>0$ as {\it an open question}. 
Further, as far as we know, they did not publish a paper with the proofs of the~results announced in~\cite{Alb2}--\cite{Alb} (it was written in~\cite{Alb} that the work is still~"in progress"\,). 

In the present paper we extend the above assertion to the case of fractional Sobolev spaces and also we cover the critical case $\tau=\tau_*$ as well. 

\begin{ttt}
\label{LPT1}{\sl Let $\al>0$, \,$1<p<\infty$, \,$\alpha p>n$,  and
$v\in \LK(\R^n,\R^d)$. Suppose that $0<\tau\le n$. Then   the following assertions hold:
\begin{itemize}
\item[(i)] \,if $\tau\ne\tau_*=n-(\alpha-1)p$, then $v$ has
the~$(\tau,\sigma)$-$N$-property,  where the value $\sigma=\sigma(\tau)$ is defined in~(\ref{np1}).

\item[(ii)] \,if $\al>1$ \,and \,$\tau=\tau_*>0$ then $\sigma(\tau)=\tt$ and  the mapping $v$ in
general has NO \ $(\tt,\tt)$-$N$-property, i.e., it could be
$\H^{{\tt}}(v(E))>0$ for some $E\subset\R^n$ with
$\H^{{\tt}}(E)=0$.
\end{itemize}
}
\end{ttt}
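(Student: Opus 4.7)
The plan is to handle (i) and (ii) by quite different methods. For (i), write $v = K_\alpha * g$ with $g \in L_p(\R^n)$ and use the local H\"older estimate for Bessel potentials,
\[
\diam v(B(x,r)) \leq C r^{\alpha - n/p} \|g\|_{L_p(B(x,cr))},
\]
valid under $\alpha p > n$. Given $E$ with $\H^\tau(E) = 0$ and $\varepsilon > 0$, choose a Besicovitch-type cover $\{B_i = B(x_i, r_i)\}$ of $E$ of bounded multiplicity with $r_i \to 0$ and $\sum_i r_i^\tau < \varepsilon$. In the subcritical regime $0 < \tau < \tau_*$, raise the oscillation bound to the power $\sigma = \sigma(\tau)$ and apply H\"older's inequality with exponents $p/\sigma$ and $p/(p-\sigma)$ to obtain
\[
\sum_i \diam v(B_i)^{\sigma} \leq C \Bigl(\sum_i r_i^{\eta}\Bigr)^{(p-\sigma)/p} \Bigl(\int_{\bigcup_i cB_i}|g|^p\Bigr)^{\sigma/p},
\]
where a direct computation gives $\eta = \sigma(\alpha p - n)/(p - \sigma)$; this equals $\tau$ precisely when $\sigma = p\tau/(\alpha p - n + \tau)$, i.e., by the very definition of $\sigma(\tau)$ in (\ref{np1}). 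Both factors tend to $0$ (the first by choice of cover, the second by absolute continuity of the $L_p$-integral as $\meas \bigcup_i cB_i \to 0$), so $\H^{\sigma(\tau)}(v(E)) = 0$.

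In the supercritical regime $\tau > \tau_*$, one has $\sigma(\tau) = \tau$ and the naive oscillation bound alone does not suffice. I would stratify: write $E = \bigcup_k E_k$ with $E_k$ the set of $x \in E$ at which the pointwise Lipschitz surrogate $L(x) := \sup_{r > 0} r^{-1}\diam v(B(x,r))$ lies in $[2^{k-1}, 2^k)$. On $E_k$ the map $v$ is $2^k$-Lipschitz, whence $\H^\tau(v(E_k)) \leq C\, 2^{k\tau} \H^\tau(E_k) = 0$. The residual set $\{L = \infty\}$ has vanishing $\H^\tau$-measure by a sharp $(\alpha,p)$-Bessel capacitary inequality, valid precisely in the range $\tau > \tau_*$. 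Summing over $k$ yields $\H^\tau(v(E)) = 0$.

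For (ii), construct at geometric scales $r_k \to 0$ a generalised Cantor set $E = \bigcap_k E_k \subset \R^n$ with $E_k$ a union of $N_k$ disjoint cubes of side $r_k$ and $N_k r_k^{\tau_*} \to 0$ (so $\H^{\tau_*}(E) = 0$). Build $v$ as a smooth ``staircase'' that at generation $k$ sends each of the $N_k$ cubes bijectively onto a piece of a fixed target $F \subset \R^d$ with $\H^{\tau_*}(F) > 0$ (say, a standard self-similar $\tau_*$-set). A scale-by-scale estimate gives $\|\nabla^\alpha v\|_{L_p}^p \sim \sum_k N_k r_k^{n - \alpha p} h_k^p$, where $h_k$ is the step amplitude at scale $k$. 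The constraints $N_k r_k^{\tau_*} \to 0$, $v(E) \supseteq F$, and summability of this series are simultaneously satisfiable precisely when $\alpha > 1$: the extra derivative supplies just enough slack in the exponent $n - \alpha p$ to beat the critical balance, in line with the positive result for $\LLL$ stated in the companion part of the paper.

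The hardest step is the supercritical case in (i), depending on the sharp capacitary estimate asserting that the ``non-Lipschitz'' set $\{L = \infty\}$ is $\H^\tau$-null for every $\tau > \tau_*$; this is precisely the threshold left open in the announcements \cite{Alb2}, \cite{Alb}. In (ii), the delicate point is the explicit parameter bookkeeping at $\tau_*$: one must saturate the $L_p$-summability of $\nabla^\alpha v$ while keeping $\H^{\tau_*}(v(E))$ strictly positive, leaving essentially no slack---which both accounts for the tightness of the result and explains why the hypothesis $\alpha > 1$ is needed.
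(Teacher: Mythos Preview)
Your subcritical argument for (i) rests on the local oscillation bound
\[
\diam v(B(x,r)) \leq C\, r^{\alpha-n/p}\,\|g\|_{L_p(B(x,cr))},
\]
and this estimate is simply false for Bessel potentials. The kernel $K_\alpha$ is not compactly supported: if $g\equiv 0$ on $cB$ but $g\not\equiv 0$ elsewhere, the right-hand side vanishes while $v$ can still oscillate on $B$. The correct replacement (the paper's Lemmas~\ref{est-potent1}--\ref{est-potent2}) has \emph{two} terms,
\[
\diam v(Q)\le C\Bigl[\|Mg\|_{L_p(Q)}\,r^{\alpha-n/p}+r^{1-n-\theta}\!\int_Q M_{\alpha-1+\theta}g\Bigr],
\]
the second capturing the far-field contribution. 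Your H\"older computation handles the first term exactly (it is the paper's $S_2$), but the second term is the whole difficulty: it is controlled in the paper via Adams' trace inequality (Theorems~\ref{AM1}--\ref{AM2}) applied to the measure carried by a \emph{regular} dyadic family, not by absolute continuity of $\int|g|^p$. Without this, the sharp value $\sigma(\tau)=p\tau/(\alpha p-n+\tau)$ is unreachable; the global H\"older bound gives only the weaker $\sigma=p\tau/(\alpha p-n)$.

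Your supercritical argument is circular. You correctly observe that $\H^\tau(\{L=\infty\})=0$ for $\tau>\tau_*$, but the task is to bound $\H^\tau\bigl(v(E\cap\{L=\infty\})\bigr)$, and on $\{L=\infty\}$ you have no Lipschitz (or any quantitative) control of $v$---this is precisely the $(\tau,\tau)$-$N$-property you are trying to prove. The global H\"older exponent $\alpha-n/p$ does not close the gap either: one would need $\tau_*/(\alpha-n/p)<\tau$, which already fails for $\tau\in(\tau_*,\,p\tau_*/(\alpha p-n)]$ (e.g.\ $n=4$, $\alpha=2$, $p=3$ gives the range $(1,3/2]$). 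The paper avoids stratification entirely and runs the same two-term-plus-Adams scheme, now with the Riesz potential $I_{\alpha-1}$ and exponent $s=\tau p/\tau_*>p$ in Theorem~\ref{AM1}.

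For (ii), your Cantor-staircase sketch is too schematic to assess; in particular the displayed heuristic $\|\nabla^\alpha v\|_{L_p}^p\sim\sum_k N_k r_k^{n-\alpha p}h_k^p$ is neither stated nor provable at this level of generality, and the requirement that the staircase land on a fixed $\tau_*$-set $F$ while remaining in $\LK$ is exactly where such constructions typically fail. The paper takes a completely different and much shorter route: fix $n=4$, $\alpha=2$, $p=3$ (so $\tau_*=1$), use the trace identity $W^2_3(\R^4)|_{\R}=B^1_{3,3}(\R)$, and exhibit the explicit lacunary series $f_\sigma(x)=e^{-x^2}\sum_m 5^{-m}m^{-\sigma}\cos(5^m x)\in B^1_{3,3}(\R)$ with $\sigma\in(\tfrac13,\tfrac12)$, which is a.e.\ non-differentiable (Zygmund) and hence fails the one-dimensional $N$-property (Saks).
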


\begin{rem}
\label{LPT1-rem}Note that if $\al=1$ and $p>n$, then $\tt=n$ and $\LK(\R^n,\R^d)=W^1_p(\R^n,\R^d)$, and the 
validity of $(\tau,\sigma)$-$N$-property for all $\tau\in(0,n]$ and for all 
mappings of these spaces is a~simple corollary of the classical Morrey inequality (see, e.g., \cite{MalyMartio}\,). 
\end{rem}

The above Theorem~\ref{LPT1} omits the limiting cases $\alpha p=n$
and $\tau=\tt$. It is possible to cover these cases as well using
the Lorentz norms. Namely, denote by $\LLL(\R^n,\R^d)$ the space
of functions which could be represented as a~convolution of
the~Bessel potential $K_\al$ with a function~$g$ from the Lorentz
space~$L_{p,1}$ (see the definition of these spaces in the
section~\ref{prel}); respectively,
$$\|v\|_{\LLL}:=\|g\|_{L_{p,1}}.$$

\begin{ttt}
\label{LPT2}{\sl Let $\al>0$, \,$1<p<\infty$, \,$\alpha p\ge n$,
and $0<\tau\le n$. Suppose that $v\in \LLL(\R^n,\R^d)$. Then $v$ is a~continuous function satisfying
the~$(\tau,\sigma)$-$N$-property,  where again the value $\sigma=\sigma(\tau)$ is defined in~(\ref{np1})
(i.e., the limiting case $\tau=\tt$ is INCLUDED).
}
\end{ttt}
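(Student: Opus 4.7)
The plan is to adapt the scheme of Theorem~\ref{LPT1}(i) and exploit the stronger integrability built into the Lorentz norm $\|\cdot\|_{L_{p,1}}$. The decisive feature of $L_{p,1}$ is absolute continuity: for every $\varepsilon>0$ there exists $\delta>0$ such that $\|g\chi_A\|_{L_{p,1}}<\varepsilon$ whenever $|A|<\delta$. This is precisely the mechanism that promotes $W^1_n$ to $W^1_{n,1}$ in the Kauhanen--Mal\'y theorem on the $N$-property, and I would use it in the same spirit here.

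First, for $v=K_\alpha*g$ I would establish the pointwise oscillation bound
\begin{equation*}
\diam v(B(x,r))\;\le\;C\,r^{\alpha-n/p}\,\|g\|_{L_{p,1}(B(x,Cr))}\;+\;C\,r,
\end{equation*}
where the second (``smooth tail'') term comes from the portion of the Bessel kernel that is bounded away from the origin. The first term is obtained by writing the difference $v(y)-v(x)$ as a convolution of $K_\alpha(y-\cdot)-K_\alpha(x-\cdot)$ with $g$, invoking the Riesz-type decay $|K_\alpha(z)|\lesssim|z|^{\alpha-n}$ near $0$, and pairing $L_{p,1}$ against $L_{p',\infty}$ by the Lorentz--H\"older inequality together with the weak-type computation $\bigl\||\cdot-x|^{\alpha-n}\bigr\|_{L_{p',\infty}(B(x,Cr))}\lesssim r^{\alpha-n/p}$ (valid precisely when $\alpha p\ge n$). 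Continuity of $v$ then follows because absolute continuity of the Lorentz norm forces $\|g\|_{L_{p,1}(B(x,Cr))}\to 0$ as $r\to 0$.

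For $\alpha p>n$ and $\tau\ne\tau_*$ the desired $(\tau,\sigma(\tau))$-$N$-property is an immediate consequence of Theorem~\ref{LPT1}(i) via $L_{p,1}\subset L_p$; so the new content consists of the critical exponent $\tau=\tau_*$ (where $\sigma(\tau_*)=\tau_*$) and the endpoint $\alpha p=n$, both of which are handled by the same covering argument. Given $\H^{\tau_*}(E)=0$ and $\eta>0$, I would cover $E$ by balls $B_i=B(x_i,r_i)$ with $r_i<\delta$ and $\sum_ir_i^{\tau_*}<\eta$, arranged by a Besicovitch-type selection so that the enlargements $CB_i$ have bounded overlap. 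Then $\bigl|\bigcup_iCB_i\bigr|\le C\sum r_i^n\le C\delta^{n-\tau_*}\eta$ is small, so by absolute continuity $\|g\|_{L_{p,1}(\bigcup_iCB_i)}$ can be made arbitrarily small by shrinking $\delta$. The oscillation bound yields
\begin{equation*}
\sum_i\bigl(\diam v(B_i)\bigr)^{\tau_*}\;\le\;C\sum_i r_i^{\tau_*(\alpha-n/p)}\,\|g\|_{L_{p,1}(CB_i)}^{\tau_*},
\end{equation*}
and the right-hand side is controlled by H\"older's inequality, peeling off $\bigl(\sum r_i^{\tau_*}\bigr)^\theta$ for a suitable $\theta$ and absorbing the remaining mass factor through bounded overlap and the smallness of $\|g\|_{L_{p,1}(\bigcup CB_i)}$. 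Sending $\delta\to 0$ and then $\eta\to 0$ forces $\H^{\tau_*}(v(E))=0$.

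The main obstacle is this final summation at the critical exponent: one must balance the H\"older exponents so that the geometric factor $\sum r_i^{\tau_*}$ separates cleanly from the mass factor, and then absorb that mass factor using a subadditivity property of $L_{p,1}$ norms over the bounded-overlap cover. The endpoint $\alpha p=n$ (where $\tau_*=p$ and the scaling $r^{\alpha-n/p}$ degenerates to $r^0$) is the most delicate case, since there the entire smallness must come from the absolute continuity of the Lorentz norm rather than from any gain in the radius; this is exactly the analog of the Kauhanen--Mal\'y phenomenon and is the reason why $L_p$ is insufficient.
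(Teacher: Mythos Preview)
Your oscillation bound is incorrect in the cases that matter. The ``smooth tail'' term is not $O(r)$ uniformly in~$x$ unless $(\alpha-1)p>n$, i.e., unless $\tau_*<0$---and then $v\in C^1$ and there is nothing to prove. If you split $K_\alpha$ at the scale~$r$, the piece $K_\alpha\chi_{\{|z|>Cr\}}$ has $\|\nabla K_\alpha\|_{L_\infty(\{|z|>Cr\})}\sim r^{\alpha-n-1}$, so its contribution to $\diam v(B(x,r))$ is \emph{not} $\lesssim r$; if instead you split at a fixed scale, the near piece involves $g$ on a ball of \emph{fixed} radius, not on $B(x,Cr)$, and then absolute continuity on the small-measure cover $\bigcup CB_i$ gives you nothing. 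The correct replacement for the tail is the estimate of Lemma~\ref{est-crit-lor},
\[
\diam v(Q)\;\le\;C\,\|Mg\|_{L_{p,1}(Q)}\,r^{\alpha-n/p}\;+\;C\,r^{1-n}\!\int_Q I_{\alpha-1}|g|(y)\,dy,
\]
and the second term depends on $g$ \emph{globally}, through the Riesz potential $I_{\alpha-1}|g|$ evaluated on~$Q$. It cannot be absorbed by the Lorentz norm of $g$ restricted to the cover, so your mechanism (absolute continuity of $\|g\chi_A\|_{L_{p,1}}$ on sets of small measure) simply does not reach it.

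This is exactly where the critical case $\tau=\tau_*$ bites. Summing $\bigl(r_i^{1-n}\!\int_{Q_i}I_{\alpha-1}|g|\bigr)^{\tau_*}$ over a $\tau_*$-regular family leads, after H\"older, to $\int (I_{\alpha-1}|g|)^p\,d\mu$ for a measure $\mu$ with $|\!|\!|\mu|\!|\!|_{\tau_*}\le 1$. The classical Adams inequality (Theorem~\ref{AM1}) fails at $s=p$; the substitute that closes the argument is the Lorentz--Adams estimate of Theorem~\ref{lb7},
\[
\int (I_{\alpha-1}|g|)^p\,d\mu\;\le\;C\,|\!|\!|\mu|\!|\!|_{n-(\alpha-1)p}\,\|g\|_{L_{p,1}}^p,
\]
which is the genuinely new ingredient separating $\LLL$ from $\LK$ at $\tau=\tau_*$, and your proposal does not invoke it. The paper combines this with the decomposition $v=v_1+v_2$ where $v_1\in C^\infty$ is globally Lipschitz and $v_2=\GG_\alpha(g)$ with $\|g\|_{L_{p,1}}$ small (density of smooth functions in $L_{p,1}$, not absolute continuity on a cover), so that the right-hand side above is small. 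The remaining cases ($\alpha p=n$ with $\tau>\tau_*$, and $\alpha p=n$ with $\tau<\tau_*$) are handled by the same scheme together with the $p$-subadditivity $\sum_i\|f\|^p_{L_{p,1}(Q_i)}\le\|f\|^p_{L_{p,1}(\cup Q_i)}$, respectively by reduction to the critical case since $\sigma(\tau)\equiv p$ there.
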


\begin{rem}
\label{LPT2-rem}In the case $\al=k\in\N$, $kp=n$, $p\ge1$, we have $\tt=p$ and the validity of $(\tau,\sigma)$-$N$-property for mappings of the corresponding Sobolev--Lorentz 
space $W^k_{p,1}(\R^n,\R^d)$ was proved in the previous papers~\cite{BKK2}, \cite{KK15}.
\end{rem}

\subsection{The counterexample for the limiting case $\tau=\tt$ in Theorem~\ref{LPT1}-(ii).}

Suppose again that
\begin{equation*}
\label{ass3} n>(\al-1)p>n-p.
\end{equation*}
 Let us demonstrate that the positive assertion in Theorem~\ref{LPT1}~(i) \,is very sharp:
it fails in general for the limiting case~
\begin{equation*}
\label{aass3} \t=\tt=n-(\al-1)p.
\end{equation*}

Take
\begin{equation*}
\label{ass4} n=4,\qquad \al=2, \qquad p=3.
\end{equation*}
Then by definition
\begin{equation*}
\label{assa5} \tau_*=1.
\end{equation*}
So we have to construct a~function from the Sobolev space
$\Le^2_3(\R^4)=W^2_3(\R^4)$ which does not have $N$-property with
respect to $\H^1$-measure. Consider the restrictions (traces) of
functions from $W^4_3(\R^4)$ to the real line. It is well known
that the space of these traces coincides exactly with the Besov
space $B^1_{3,3}(\R)$ (see, e.g., \cite[Chapter~1, Theorem~4 on
p.~20]{JW}\,). Consider the function of one real  variable
$$f_\sigma(x)= e^{-x^2}
\sum\limits_{m=1}^\infty 5^{-m}\,{m^{-\sigma}}\,\cos(5^mx),$$
where
\begin{equation*}
\label{ass5} \frac13<\sigma<\frac12.
\end{equation*}
It is known that $f_\sigma\in B^1_{3,3}(\R)$ under above
assumptions (see, e.g., \S6.8 in Chapter~V of~\cite{St}\,). Nevertheless, the following result holds.

\begin{ttt}\label{cex1}{\sl The above function $f_\sigma:\R\to\R$ does not have $(1,1)$-$N$-property (with respect to $\H^1$-measure). }
\end{ttt}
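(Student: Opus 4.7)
The plan is to construct a compact set $E\subset\R$ with $\H^1(E)=0$ but $\H^1(f_\sigma(E))>0$, violating the $(1,1)$-$N$-property. I would start by splitting $f_\sigma=e^{-x^2}(S_M+T_M)$ with partial sum $S_M(x)=\sum_{m\leq M}5^{-m}m^{-\sigma}\cos(5^m x)$ and tail $T_M$ of size $\|T_M\|_\infty\lesssim 5^{-M}M^{-\sigma}$. The formal derivative $S_M'(x)=-\sum_{m\leq M}m^{-\sigma}\sin(5^m x)$ is a lacunary trigonometric polynomial with $\|S_M'\|_{L^2([-R,R])}^2\asymp R\cdot M^{1-2\sigma}\to\infty$ (using $\sigma<1/2$). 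By Chebyshev, for any $R$ the set $G_M:=\{x\in[-R,R]:|S_M'(x)|\gtrsim M^{1/2-\sigma}\}$ would have measure $\gtrsim R$; moreover, because all frequencies in $S_M'$ are at most $5^M$, the function $S_M'$ is essentially constant on sub-intervals of length $\ll 5^{-M}$ and approximately affine on sub-intervals of length $\sim 5^{-M}$.

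Next I would choose a rapidly growing sequence $M_0<M_1<\cdots$ with $M_{k+1}/M_k\to\infty$ and build nested compact sets $E_0\supset E_1\supset\cdots$, where $E_k$ is a disjoint union of closed intervals of length $\ell_k:=5^{-M_k}$. Within each stage-$(k-1)$ interval $I$, I would divide it into $5^{M_k-M_{k-1}}$ sub-intervals $J$ of length $\ell_k$. On those $J$ meeting $G_{M_k-1}$, the map $f_\sigma$ is approximately affine with slope $\sim M_k^{1/2-\sigma}$, so $f_\sigma(J)$ is a sub-interval of $f_\sigma(I)$ of length $\sim\ell_k M_k^{1/2-\sigma}$. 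The total length of these sub-images is $\sim\ell_{k-1}M_k^{1/2-\sigma}$, exceeding $|f_\sigma(I)|\sim\ell_{k-1}M_{k-1}^{1/2-\sigma}$ by the amplification factor $(M_k/M_{k-1})^{1/2-\sigma}\to\infty$. A greedy selection (equivalently, keeping every $r_k$-th good sub-interval with $r_k\sim(M_k/M_{k-1})^{1/2-\sigma}$) would then extract $n_k\sim 5^{M_k-M_{k-1}}(M_{k-1}/M_k)^{1/2-\sigma}$ sub-intervals whose images still cover $f_\sigma(I)$; I take $E_k$ to be their union.

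The verification would then be straightforward: stage-by-stage $|E_k|=|E_{k-1}|\cdot(M_{k-1}/M_k)^{1/2-\sigma}$, so $|E_k|\to 0$ and $\H^1(E)=0$. Meanwhile $f_\sigma(E_k)\supseteq f_\sigma(E_0)$ for every $k$, and by compactness and continuity $f_\sigma(E)=\bigcap_k f_\sigma(E_k)\supseteq f_\sigma(E_0)$, yielding $\H^1(f_\sigma(E))>0$.

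The main obstacles would be the lower bound $|S_{M_k-1}'|\gtrsim M_k^{1/2-\sigma}$ on a positive-density subset of each parent interval --- best secured via the Salem--Zygmund law of the iterated logarithm for lacunary trigonometric series, though a direct $L^2$-moment estimate combined with a greedy selection of parents should also suffice --- together with the combinatorial arrangement ensuring that the chosen sub-images really do cover $f_\sigma(I)$ at each scale (using near-equidistribution of sub-interval centers, preserved under the approximately affine $f_\sigma|_I$). The condition $\sigma<1/2$ is essential for the amplification $(M_k/M_{k-1})^{1/2-\sigma}$ to exceed $1$; the condition $\sigma>1/3$ ensures $f_\sigma\in B^1_{3,3}(\R)$, placing it in the trace class of $W^2_3(\R^4)$ so that this one-dimensional example extends to the counterexample required by Theorem~\ref{LPT1}(ii).
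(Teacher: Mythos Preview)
Your construction is heuristically plausible, but it takes a far more laborious route than the paper and leaves real work undone. The paper's proof is two lines: it invokes the classical fact (Saks, \textit{Theory of the Integral}, Ch.~IX, Thm.~7.7) that any $f:\R\to\R$ with the $N$-property must be differentiable on a set of positive measure, and then Zygmund's theorem that a lacunary series $\sum b^{-m}\varepsilon_m\cos(b^m x)$ with $\varepsilon_m\to 0$ and $\sum\varepsilon_m^2=\infty$ is almost everywhere non-differentiable. Since $\sigma<\tfrac12$ gives $\sum m^{-2\sigma}=\infty$, the function $e^{x^2}f_\sigma$ is a.e.\ non-differentiable, hence so is $f_\sigma$, and by contrapositive $f_\sigma$ cannot have the $N$-property. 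No Cantor set, no covering, no scale-matching.

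Your approach tries to build the null set $E$ and witness $\H^1(f_\sigma(E))>0$ by hand. The scaling heuristics are internally consistent (once you restrict to ``good'' parents so that $|f_\sigma(I)|\sim\ell_{k-1}M_{k-1}^{1/2-\sigma}$), and the amplification factor $(M_k/M_{k-1})^{1/2-\sigma}>1$ is exactly what $\sigma<\tfrac12$ buys. But the two obstacles you flag are genuine: (a) the lower bound $|S'_{M_k-1}|\gtrsim M_k^{1/2-\sigma}$ on a \emph{positive-density subset of each individual parent interval} (not just globally on $[-R,R]$) requires a localized second-moment or Salem--Zygmund argument that you have not carried out; and (b) the claim that a $1/r_k$-fraction of good children can be chosen so that their images \emph{cover} $f_\sigma(I)$ is not automatic --- long images can overlap heavily, and ``near-equidistribution under an approximately affine map'' needs a quantitative statement and proof. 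Neither gap is fatal, but closing them is substantially more work than the entire paper's argument.

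The conceptual point you are missing is precisely the Saks link between the $N$-property and differentiability: once you know that, the lacunary structure (which you are already exploiting for the $L^2$ growth of $S_M'$) delivers non-differentiability directly, and the explicit Cantor construction becomes unnecessary.
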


This result follows directly from the following two classical
facts:

\begin{ttt}[see, e.g., Theorem~7.7 of Chapter~IX in \cite{Saks}]\label{cex2}{\sl If a function $f:\R\to\R$ has the $N$-property,
then it is differentiable on the set of positive measure. }
\end{ttt}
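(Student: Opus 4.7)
The plan is to prove the contrapositive: supposing the set $D_f := \{x\in\R : f'(x)\text{ exists finitely}\}$ has Lebesgue measure zero, I construct a null set $E\subset\R$ with $\meas f(E)>0$, contradicting the $N$-property.

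The first step is to invoke the Denjoy--Young--Saks theorem, which asserts that for any real-valued function $f$ of a real variable, almost every point $x$ falls into one of four exhaustive types: (a)~$f'(x)$ exists finitely, or (b)--(d)~$f$ has one of three specific patterns of infinite Dini derivatives (e.g.~$D^+f(x)=+\infty$ paired with $D_-f(x)=-\infty$). Under the assumption $\meas(D_f)=0$, type (a) contributes only a null set, so at least one of the remaining three alternatives---say, by symmetry, the one with $D^+f(x)=+\infty$---holds on a Borel set $A\subset\R$ of positive Lebesgue measure.

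The second step is a Vitali-type measure-stretching construction. Since $D^+f(x)=+\infty$ on $A$, for each $n\in\N$ the family
\[
\mathcal{V}_n \;=\; \bigl\{[x,x+h] : x\in A,\ 0<h<2^{-n},\ f(x+h)-f(x) > 2^n h\bigr\}
\]
is a Vitali cover of $A$. I extract a countable disjoint subfamily $\{I_{n,j}\}_j$, then pass to a finite subcollection $J_n$ whose total source length lies in $(2^{-n-2},\,2^{-n})$. The corresponding union $U_n := \bigcup_{j\in J_n} I_{n,j}$ satisfies $\meas U_n < 2^{-n}$, while each image $f(I_{n,j})$ is an interval of length at least $2^n |I_{n,j}|$ (by continuity of $f$), so $\sum_{j\in J_n}|f(I_{n,j})| > 2^n\cdot 2^{-n-2} = 1/4$. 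A Borel--Cantelli estimate then gives $\meas\bigl(\limsup_n U_n\bigr)=0$; one aims to show $\meas f(\limsup_n U_n)>0$.

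The main obstacle, and the technical heart of the proof, is to upgrade the bound $\sum_j|f(I_{n,j})|\ge 1/4$ to a genuine lower bound $\meas f(U_n)\ge c_0>0$: the source intervals $I_{n,j}$ are disjoint, but their images may overlap heavily, and without further information one only obtains $\meas f(U_n)\ge \max_j |f(I_{n,j})|$, which is far too weak (a wildly oscillating $f$, such as $\sin(1/x)$-type examples, shows that $D^+f=+\infty$ alone does not force $\meas f(A)\ge\lambda\meas A$). To surmount this, one must use the \emph{paired} DYS condition, namely $D_-f(x)=-\infty$ alongside $D^+f(x)=+\infty$, to pass to a positive-measure subset $A'\subset A$ on which $f$ admits a Jordan-type pointwise decomposition into monotone pieces at every scale; on monotone pieces the image intervals are essentially disjoint and the stretching bound transfers to genuine image measure. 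Restricting the $U_n$ construction to $A'$ and arranging a common accumulation in the image across stages~$n$ delivers $\meas f(E)>0$ for $E=\limsup_n(U_n\cap A')$, which is the desired contradiction. This delicate decoupling of null source from positive image, combining DYS with a Jordan-type splitting, is executed carefully in Saks's original proof of Theorem~7.7 in Chapter~IX of~\cite{Saks}, and adapting it is where most of the effort would be concentrated.
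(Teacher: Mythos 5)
The paper does not prove this statement at all: Theorem~\ref{cex2} is quoted verbatim from Saks (Chapter~IX, Theorem~7.7) and used as a black box, so there is no internal proof to compare against. Judged on its own, your proposal is an outline rather than a proof, and the step you yourself flag as ``the technical heart'' is genuinely missing. From $D^+f=+\infty$ on a set $A$ of positive measure you obtain disjoint source intervals $I_{n,j}$ with $\sum_j|I_{n,j}|<2^{-n}$ and $\sum_j|f(I_{n,j})|>1/4$, but, as you correctly observe, this does not bound $\meas f(U_n)$ from below, since the image intervals may overlap with unbounded multiplicity; indeed no lower bound on $\meas f(A)$ can follow from the infinitude of one upper derivate alone. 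The mechanism you propose to close this gap --- using the paired condition $D_-f=-\infty$ to extract a positive-measure subset on which $f$ admits ``a Jordan-type pointwise decomposition into monotone pieces at every scale'' --- is not substantiated and is implausible: a function that is differentiable almost nowhere has unbounded variation on every subinterval, so no decomposition into monotone pieces is available, and the coexistence of a $+\infty$ upper derivate with a $-\infty$ lower derivate signals violent oscillation rather than local monotonicity. Saks's argument does not run through such a covering estimate; it rests on the machinery relating condition $(N)$ to the Banach conditions and the full Denjoy relations, which is precisely the content your sketch defers back to the reference. As written, the proposal therefore proves nothing beyond what is already conceded to Saks.

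Two further points would need attention even if the main lemma were supplied. First, $\meas f(U_n)\ge c_0$ for every $n$ does not yield $\meas f\bigl(\limsup_n U_n\bigr)>0$, because $f(\limsup_n U_n)$ is in general strictly smaller than $\limsup_n f(U_n)$; the standard repair is to nest the stages, choosing the intervals of generation $n+1$ inside $\Int U_n$ (which requires checking that $U_n$ captures a positive measure portion of $A$, e.g.\ by working at density points of $A$) so that $f\bigl(\bigcap_n U_n\bigr)=\bigcap_n f(U_n)$ by compactness and continuity. Second, your argument uses continuity of $f$ (to know that $f(I_{n,j})$ is an interval of length at least $2^n|I_{n,j}|$) and implicitly measurability (to apply the measurable form of Denjoy--Young--Saks and the Vitali covering theorem on $A$); these hypotheses are harmless for the application to $f_\sigma$ in the paper, but they are not stated in Theorem~\ref{cex2} and should be made explicit.
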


\begin{ttt}[see, e.g., \S6 of Chapter~V, page~206, in \cite{Zyg}]\label{cex3}{\sl The continuous function $f(x)=
\sum\limits_{m=1}^\infty b^{-m}\e_m\,\cos(b^mx)$ with $b>1$ and
$\e_m\to0$, $\sum\limits_{m=1}^\infty\e_m^2=\infty$, is not
differentiable almost everywhere. }
\end{ttt}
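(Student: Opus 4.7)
The plan is to argue by contradiction: assume $f$ is differentiable on a measurable set $E\subset\R$ with $|E|>0$ and show that the lacunary structure forces the formal derived series $\sum \e_m\sin(b^m x)$ to converge on $E$. Since by the classical Zygmund--Kolmogorov theorem on lacunary trigonometric series (see \cite[Ch.~V, \S6]{Zyg}), a lacunary series with $\sum \e_m^2=\infty$ diverges a.e., this will produce the desired contradiction and in fact the stronger conclusion that $f$ is differentiable only on a null set.

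The heart of the argument is an explicit asymptotic identity for the symmetric difference quotients with the distinguished increment $h_N := 2\pi/b^N$. Expanding termwise,
$$\frac{f(x+h_N)-f(x)}{h_N}\;=\;\sum_{m=1}^\infty \frac{b^{-m}\e_m}{h_N}\bigl[\cos(b^m(x+h_N))-\cos(b^m x)\bigr],$$
I split the series at the index $N$. For the high-frequency tail $m\ge N$, each summand is controlled by $2\,b^{N-m}|\e_m|/(2\pi)$, hence the whole tail is bounded by $C\sup_{m\ge N}|\e_m|$, which vanishes because $\e_m\to 0$. For the low-frequency head $m<N$, the increment $\delta_m := b^m h_N = 2\pi b^{m-N}$ is bounded, so the Taylor expansion $\cos(y+\delta_m)-\cos(y) = -\delta_m\sin(y)+O(\delta_m^2)$ produces the main contribution $-\e_m\sin(b^m x)$ together with a quadratic remainder whose modulus is $\le \pi|\e_m|b^{m-N}$. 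Summing these remainders and splitting at a large cutoff $M$ (so that the first $M$ terms are killed by the factor $b^{M-N}\to 0$, while the upper range is controlled by $\sup_{m>M}|\e_m|\cdot \sum b^{-k}$) yields an error that is $o(1)$ uniformly in $x$.

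Combining the two estimates, I obtain the uniform identity
$$\frac{f(x+h_N)-f(x)}{h_N}\;=\;-\sum_{m=1}^{N-1}\e_m\sin(b^m x) + o(1)\qquad(N\to\infty).$$
Therefore, at every point $x\in E$ where $f'(x)$ exists and is finite, the partial sums $S_N(x):=\sum_{m=1}^N \e_m\sin(b^m x)$ converge to $-f'(x)$. In particular, $S_N$ would converge on the positive-measure set $E$. But the classical lacunary divergence theorem asserts, under the gap condition $b>1$ and $\sum\e_m^2=\infty$, that $S_N(x)$ diverges a.e.; this rules out convergence on any set of positive measure, giving the contradiction.

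The main obstacle I anticipate is the clean uniform control of the $o(1)$ remainder: one must exploit $\e_m\to 0$ simultaneously on both the high-frequency tail (where it kills the $O(1)$ cosine differences weighted by $b^{N-m}$) and on the head (where it kills the geometric sum $\sum b^{m-N}$ of quadratic Taylor remainders). Once this difference-quotient identity is in hand, the theorem reduces to the classical divergence result for lacunary series, which I would simply cite from Zygmund rather than reprove.
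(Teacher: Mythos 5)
Your argument is correct, and it is worth noting that the paper itself offers no proof of this statement: it is quoted verbatim from Zygmund (Ch.~V, \S6), so the ``paper's proof'' is a citation. Your derivation is essentially the classical one behind that citation: the special increments $h_N=2\pi/b^N$ make the $m=N$ term vanish and give the uniform identity $\frac{f(x+h_N)-f(x)}{h_N}=-\sum_{m<N}\e_m\sin(b^mx)+o(1)$ (your tail bound $\sum_{m\ge N}b^{N-m}|\e_m|$ and the head estimate $\sum_{m<N}|\e_m|b^{m-N}$, each killed by $\e_m\to0$ after splitting at a fixed cutoff, are both sound and uniform in $x$), so differentiability at $x$ forces convergence of the lacunary sine series at $x$, and differentiability on a set of positive measure contradicts the Kolmogorov--Zygmund theorem that a Hadamard-lacunary series with $\sum\e_m^2=\infty$ converges only on a null set. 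What your route buys is a self-contained reduction of the nondifferentiability statement to that single convergence theorem, which you then cite exactly as the paper cites the nondifferentiability result itself. One small caveat: the convergence theorem in Zygmund is stated for integer frequencies, while the statement allows arbitrary $b>1$; for non-integer $b$ you should either invoke its routine extension to real Hadamard-lacunary frequencies (the $L^2$ argument with Riemann--Lebesgue estimates for $\int_E\cos(2b^mx)\,dx$ and the cross terms goes through unchanged, since all frequency differences tend to infinity), or observe that the paper's application only needs $b=5$. With that remark added, the proof is complete.
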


Note that the above functions $f_\sigma$, $f$ from
Theorems~\ref{cex1}, \ref{cex3} are the typical examples of the so
called lacunary Fourier series.

From Theorem~\ref{cex1} it follows that there exists a function
$v\in W^2_3(\R^4)$ such that its restriction to the real line
coincides with $f_\sigma$, i.e., $v$ does {\bf not} have the
$N$-property. The construction of the counterexample is finished.

\subsection{"The uncertainty principle"  and Fubini--type theorems for $N$-properties}

The above formulated $N$-properties have an important application in the recent extension of the Morse--Sard theorem to Sobolev spaces (\cite{FKR}, see also Subsection~\ref{MST}\,). 
Here we need the following notion. 

For a pair number $\tau,\sigma>0$ we will say that a continuous function
$v:\R^n\to\R^d$ satisfies the $(\tau,\sigma)$-$N_*$-property, if for every
$q\in[0,\sigma]$ and for any set $E\subset\R^n$ with $H^\tau(E)=0$
we have
\begin{equation}\label{dub4-q}
\H^{\tau(1-\frac{q}\sigma)}(E\cap v^{-1}(y))=0\qquad\mbox{ for \ $\H^q$-a.a.
}y\in\R^d.
\end{equation}
This implies, in particular, the usual $(\tau,\sigma)$-$N$-property
\begin{equation*}\label{dub4-q''}
\H^\sigma\bigl(v(E)\bigr)=0\qquad\mbox{ whenever}\quad\H^\tau(E)=0.
\end{equation*}
(Indeed, it is sufficient to take~$q=\sigma$ in~(\ref{dub4-q}).) In other words, $(\tau,\sigma)$-$N_*$-property is stronger than the~usual $(\t,\sigma)$ one. 

Intuitively, the sense of $N_*$-property is very close to
{\it Heisenberg's uncertainty principle} in theoretical physics:
the more precise information we received on measure of the image
of the critical set, the less precisely the preimages are
described, and vice versa.

Also, the $N_*$-property could be considered as Fubini type theorem for the~usual $N$-property.

Now we could strength our previous results in the following way. 

\begin{ttt}
\label{LPT1-m}{\sl Let $\al>0$, \,$1<p<\infty$, \,$\alpha p>n$,  and
$v\in \LK(\R^n,\R^d)$. Suppose that $0<\tau\le n$. Then   the following assertions hold:
\begin{itemize}
\item[(i)] \,if $\tau\ne\tau_*=n-(\alpha-1)p$, then $v$ has
the~$(\tau,\sigma)$-$N_*$-property,  where the value $\sigma=\sigma(\tau)$ is defined in~(\ref{np1}).

\item[(ii)] \,if $\al>1$ and $\tau=\tau_*$, then $\sigma(\tau)=\tt$ and  the mapping $v$ in
general has no \ $(\tt,\tt)$-$N$-property, i.e., it could be
$\H^{{\tt}}(v(E))>0$ for some $E\subset\R^n$ with
$\H^{{\tt}}(E)=0$.
\end{itemize}
}
\end{ttt}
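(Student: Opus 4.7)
Part~(ii) is immediate: the counterexample already constructed in the previous subsection refutes the $(\tt,\tt)$-$N$-property, and since the $N_*$-property is strictly stronger than the $N$-property, it is refuted \emph{a fortiori}. Thus the real task is to upgrade the proof of Theorem~\ref{LPT1}(i) to deliver the Fubini strengthening~(\ref{dub4-q}). The overall strategy is a two-step reduction: first, decompose $E$ into uniformly H\"older pieces on which $v$ is controlled; then run a Marstrand-type covering--Fubini estimate on each piece.

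\emph{Step 1 (H\"older decomposition).} The proof of Theorem~\ref{LPT1}(i) already produces, for any $E\subset\R^n$ with $\H^\tau(E)=0$, a decomposition $E=\bigcup_k E_k$ such that each restriction $v|_{E_k}$ is uniformly $\gamma$-H\"older with an exponent $\gamma=\gamma(\tau)\in(0,1]$ calibrated precisely so that $\tau/\gamma=\sigma(\tau)$. In the supercritical regime $\tau>\tt$ one has $\gamma=1$ (Lipschitz pieces extracted from level sets of a truncated maximal function of the distributional gradient of~$v$), while in the subcritical regime $0<\tau<\tt$ one uses Hedberg-type pointwise estimates for the Bessel potential $v=K_\alpha*g$ together with level sets of a suitable fractional maximal function of~$g$.

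\emph{Step 2 (H\"older--Fubini lemma).} The key auxiliary statement is the following Marstrand-type inequality: if $A\subset\R^n$ has $\H^\tau(A)<\infty$ and $u\colon A\to\R^d$ is $\gamma$-H\"older with constant $L$, then for every $q\in[0,\tau/\gamma]$,
\begin{equation*}
\int^*_{\R^d}\H^{\tau-q\gamma}\bigl(A\cap u^{-1}(y)\bigr)\,d\H^q(y)\le C(n,d,\gamma,q)\,L^q\,\H^\tau(A),
\end{equation*}
the outer integral accommodating possible non-measurability in~$y$. The proof is a direct covering argument: for $\e>0$ pick a cover of $A$ by balls $B(x_i,r_i)$ with $x_i\in A$ and $\sum_i r_i^\tau<\H^\tau(A)+\e$; the H\"older bound gives $u(A\cap B(x_i,r_i))\subset B(u(x_i),Lr_i^\gamma)$, so for each $y\in\R^d$ the subfamily $\{B(x_i,r_i):y\in B(u(x_i),Lr_i^\gamma)\}$ covers $A\cap u^{-1}(y)$; integrating the resulting $(\tau-q\gamma)$-content estimate against $\H^q$ via the elementary bound $\H^q(B(\cdot,\rho))\le C\rho^q$ closes the argument.

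\emph{Step 3 (Conclusion and main obstacle).} Applying the lemma to $u=v|_{E_k}$ on each piece (where $\H^\tau(E_k)=0$ makes the right-hand side vanish) yields $\H^{\tau(1-q/\sigma)}(E_k\cap v^{-1}(y))=0$ for $\H^q$-a.e.\ $y\in\R^d$; a countable union over $k$ of the exceptional $\H^q$-null sets then gives~(\ref{dub4-q}), completing~(i). The main obstacle is Step~1: producing the H\"older decomposition with the \emph{exact} exponent $\gamma(\tau)$ forced by~(\ref{np1})---and especially capturing the continuous dependence of $\gamma(\tau)$ throughout the subcritical range $0<\tau<\tt$---requires a family of maximal operators parametrized by~$\tau$ together with sharp pointwise Hedberg-type bounds for Bessel (and Besov) potentials. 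These are precisely the technical ingredients that already drive the proof of Theorem~\ref{LPT1}(i); once they are in hand, the Fubini upgrade of Steps~2--3 is soft and essentially dimension-agnostic.
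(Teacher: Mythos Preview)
Your outline diverges from the paper's argument and carries two genuine gaps.

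\textbf{Step~1 is not available.} The paper's proof of Theorem~\ref{LPT1}(i) does \emph{not} produce a H\"older decomposition $E=\bigcup_k E_k$ with $v|_{E_k}$ uniformly $\gamma$-H\"older. It works directly with $\tau$-regular families of dyadic cubes $\{Q_i\}$ covering~$E$ and proves (this is exactly Remark~\ref{summ}) that for every $\e>0$ there is $\delta>0$ such that $\sum_i\ell(Q_i)^\tau<\delta$ forces $\sum_i[\diam v(Q_i)]^\sigma<\e$. No Lusin-type pieces appear. Worse, in the subcritical range your required exponent is $\gamma=\tau/\sigma=\alpha-n/p+\tau/p$, which is \emph{strictly larger} than the global Morrey exponent $\alpha-n/p$; manufacturing pieces with that improved H\"older control, together with $\H^\tau$-null exceptional sets, is a substantial independent task not carried out anywhere in the paper.

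\textbf{Step~2 is technically wrong as written.} The ``elementary bound $\H^q(B(\cdot,\rho))\le C\rho^q$'' is false for Hausdorff \emph{measure} when $q<d$: the left side is $+\infty$. The bound holds only for the content $\H^q_\infty$, but then you cannot integrate against $d\H^q$ and close via Tonelli, since $\H^q$ is not $\sigma$-finite on~$\R^d$ for $q<d$. Repairing this is precisely the content of Theorem~\ref{FubN} (from~\cite{HKK}), which handles the passage from the covering quantity $\sum(\diam D_j)^\mu[\diam v(D_j)]^q$ to the $\H^q$-a.e.\ statement by a careful content argument---not the naive integration you sketch.

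\textbf{The paper's route is a one-line H\"older.} From Remark~\ref{summ} one obtains, for any $E$ with $\H^\tau(E)=0$, covers $\{D_i\}$ with both $\sum(\diam D_i)^\tau<\e$ and $\sum[\diam v(D_i)]^\sigma<\e$ simultaneously. H\"older's inequality with exponents $\frac{\sigma}{\sigma-q},\frac{\sigma}{q}$ then gives $\sum(\diam D_i)^{\tau(1-q/\sigma)}[\diam v(D_i)]^q<\e$ (this is Theorem~\ref{ff}), i.e., $\Phi(E)=0$ in the notation of Theorem~\ref{FubN}, and that theorem delivers~(\ref{dub4-q}). No decomposition of~$E$, no H\"older pieces, no Marstrand lemma.
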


\begin{rem}
\label{LPT1-rem*}Note that if $\al=1$ and $p>n$, then $\tt=n$ and $\LK(\R^n,\R^d)=W^1_p(\R^n,\R^d)$, and the 
validity of $(\tau,\sigma)$-$N_*$-property for all $\tau\in(0,n]$ and for all 
mappings of these spaces is a~simple corollary of the classical Morrey inequality and Theorem~\ref{ff} below. 
\end{rem}

Of course, the above Theorem~\ref{LPT1-m} omits the limiting cases $\alpha p=n$
and $\tau=\tt$. Again, it is possible to cover these cases as well using
the Lorentz norms. 

\begin{ttt}
\label{LPT2-m}{\sl Let $\al>0$, \,$1<p<\infty$, \,$\alpha p\ge n$,
and $0<\tau\le n$. Suppose that $v\in \LLL(\R^n,\R^d)$. Then $v$ is a~continuous function satisfying
the~$(\tau,\sigma)$-$N_*$-property,  where again the value $\sigma=\sigma(\tau)$ is defined in~(\ref{np1}).
}
\end{ttt}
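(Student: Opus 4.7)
The plan is to strengthen Theorem~\ref{LPT2} from the $(\tau,\sigma)$-$N$-property to the $(\tau,\sigma)$-$N_*$-property via a Fubini/Eilenberg-type level-set argument, in the spirit of Theorem~\ref{ff} alluded to in Remark~\ref{LPT1-rem*}. Continuity of $v$ is immediate from the Lorentz--Sobolev embedding at $\alpha p\ge n$, so the substantive content is to control the pre-images $v^{-1}(y)\cap E$ when $\H^\tau(E)=0$.

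My first step is to refine the proof of Theorem~\ref{LPT2} to produce, for each $k\in\N$, a covering of $E$ by balls $B_i^{(k)}=B(x_i^{(k)},r_i^{(k)})$ with $r_i^{(k)}\le 1/k$, $\sum_i(r_i^{(k)})^\tau<2^{-k}$, and \emph{individual} oscillation control
\begin{equation*}
\diam v(B_i^{(k)})\le C\,(r_i^{(k)})^{\tau/\sigma}.
\end{equation*}
Such a selection is produced by a Frostman/Whitney-type procedure inside the superlevel sets of a Lorentz-type maximal function of the density $g$ (where $v=K_\alpha*g$, $g\in L_{p,1}$): off a prescribed superlevel set, $v$ obeys Hölder oscillation estimates with the correct exponent, while any $\H^\tau$-null set $E$ can be engulfed by balls whose radii are dictated by the level, the total $\sum_i(r_i^{(k)})^\tau$ being controlled by the $L_{p,1}$-norm of a truncation of the maximal function.

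Given such coverings, the second step is the Eilenberg-type inequality: for every $k$ and every $q\in[0,\sigma]$,
\begin{align*}
\int^*_{\R^d}\Big[\sum_i\chi_{v(B_i^{(k)})}(y)\,(r_i^{(k)})^{\tau(1-q/\sigma)}\Big]\,\dd\H^q(y)
&\le C\sum_i(r_i^{(k)})^{\tau(1-q/\sigma)}\bigl(\diam v(B_i^{(k)})\bigr)^q\\
&\le C\sum_i (r_i^{(k)})^\tau<C\,2^{-k}.
\end{align*}
By Chebyshev and Borel--Cantelli, for $\H^q$-a.e.\ $y\in\R^d$ the bracketed sum is $\le 2^{-k/2}$ for all sufficiently large $k$; since this sum bounds from above the $\H^{\tau(1-q/\sigma)}_{1/k}$-premeasure of $E\cap v^{-1}(y)$, we conclude $\H^{\tau(1-q/\sigma)}(E\cap v^{-1}(y))=0$ for $\H^q$-a.e.\ $y$, which is exactly the $N_*$-property.

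The principal obstacle is the joint critical case $\alpha p=n$ and $\tau=\tt$: here the Hölder exponent $\alpha-n/p$ degenerates and the Sobolev embedding delivers only continuity, so the pointwise bound $\diam v(B(x,r))\le C\,r^{\tau/\sigma}$ is false uniformly. It is exactly here that the hypothesis $g\in L_{p,1}$ (rather than $L_p$) becomes indispensable: Lorentz-type Riesz-potential bounds replace the missing Hölder gain by a $g$-dependent quantity whose control on a Frostman covering still yields the required $\diam v(B_i)\le C\,r_i^{\tau/\sigma}$. The technically delicate part is thus not any single inequality but the \emph{simultaneous} bookkeeping producing a covering with $\sum_i r_i^\tau$ small and with each diameter power-controlled, keeping the $L_{p,1}$-norm of $g$ as the single master quantity throughout.
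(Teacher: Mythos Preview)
Your overall architecture---an Eilenberg-type inequality followed by a Borel--Cantelli/level-set argument---is exactly the mechanism behind the paper's Theorem~\ref{FubN} and Theorem~\ref{ff}, so the outer shell of your proof is correct and matches the paper. The gap is in your \emph{first step}: the individual oscillation bound
\[
\diam v\bigl(B_i^{(k)}\bigr)\le C\,\bigl(r_i^{(k)}\bigr)^{\tau/\sigma}
\]
is in general unattainable, and you do not need it. Observe that for every $\tau\ge\tt$ one has $\sigma(\tau)=\tau$, so your bound reads $\diam v(B_i)\le C\,r_i$, i.e.\ a uniform Lipschitz estimate on each covering ball. But whenever $\tt=n-(\alpha-1)p>0$ the Morrey exponent is $\alpha-\tfrac{n}{p}<1$, and a generic $v\in\LLL$ is only H\"older of that order; no Frostman/Whitney selection can manufacture a Lipschitz bound on \emph{every} ball of a cover of an arbitrary $\H^\tau$-null set. (In the doubly critical case $\alpha p=n$, $\tau=\tt=p$ the exponent degenerates to~$0$ and the situation is worse still.) Your own closing paragraph flags exactly this difficulty without resolving it.

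The paper's route avoids the individual bound entirely. What the proof of Theorem~\ref{LPT2} actually delivers (recorded in Remark~\ref{summ}) is the pair of \emph{summed} estimates: given $\e>0$ there is $\delta>0$ such that any $\tau$-regular cover $\{Q_i\}$ with $\sum_i\ell(Q_i)^\tau<\delta$ satisfies simultaneously
\[
\sum_i\bigl(\diam Q_i\bigr)^\tau<\e
\qquad\text{and}\qquad
\sum_i\bigl(\diam v(Q_i)\bigr)^\sigma<\e.
\]
Now a single application of H\"older with exponents $\tfrac{\sigma}{\sigma-q}$ and $\tfrac{\sigma}{q}$ (this is Theorem~\ref{ff}) gives, for $\mu=\tau(1-\tfrac{q}{\sigma})$,
\[
\sum_i\bigl(\diam Q_i\bigr)^{\mu}\bigl(\diam v(Q_i)\bigr)^{q}
\le\Bigl(\sum_i(\diam Q_i)^{\tau}\Bigr)^{1-q/\sigma}
\Bigl(\sum_i(\diam v(Q_i))^{\sigma}\Bigr)^{q/\sigma}<\e,
\]
and Theorem~\ref{FubN} finishes. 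In short: replace your pointwise step by this H\"older step and your argument becomes correct---and identical to the paper's.
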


\begin{rem}
\label{LPT2-rem*}In the case $\al=k\in\N$, $kp=n$, $p\ge1$, we have $\tt=p$ and the validity of $(\tau,\sigma)$-$N_*$-property for mappings of the corresponding Sobolev--Lorentz 
space $W^k_{p,1}(\R^n,\R^d)$ was proved in the previous papers~\cite{BKK2}, \cite{HKK}.
\end{rem}

\subsection{Application to the Morse--Sard--Dubovitski\u{\i}--Federer theorems}
\label{MST}

The classical Morse--Sard theorem claims  that for a mapping $v
\colon \R^n\to\R^m$ of class $\CC^k$ the measure of critical
values $v(Z_{v,m})$ is zero under condition $k
> \max(n-m, 0)$. Here the critical set, or
$m$-critical set is defined as $Z_{v,m} = \{ x \in \R^n : \, \rank
\nabla v(x) < m \}$. Further Dubovitski\u{\i}   in 1957~\cite{Du} and
independently Federer and Dubovitski\u{\i}  in 1967 (see \cite{Du2} and \cite[Theorem
3.4.3]{Fed}\,) found some
elegant extensions of this theorem to the case of other (e.g.,
lower) smoothness assumptions. They also established the sharpness
of their results within the $\CC^k$ category.

Recently (see \cite{FKR}) it was found the 
following \textit{bridge theorem} that
includes all the above results as particular cases.

We  say that a~mapping $v:\R^n\to\R^d$ belongs to the class $\Cc$
for some integer positive~$k$ and $0<\alpha\le1$ if there exists
a~constant $L\ge 0$ such that
\begin{equation*}\label{hic1}\mbox{$|\nabla^kv(x)-\nabla^kv(y)|\le
L\,|x-y|^\alpha$ \qquad for all $x,y\in\R^n$.}
\end{equation*}

To simplify the notation, let us make the following agreement: for
$\alpha=0$ we identify~$C^{k,\alpha}$ with usual spaces of 
$C^k$-smooth mappings. The following theorem was obtained in \cite{FKR}.

\begin{ttt}\label{DST-H}{\sl Let
$m\in\{1,\dots,n\}$, \,$k\ge1$, \,$d\ge m$, \,$0\le\al\le1$, \,and
\,$v\in \Cc(\R^n,\R^d)$. Then for any $q\in (m-1,\infty)$ the
equality
\begin{equation*}\label{hf1}
\H^{\mu_q}(Z_{v,m}\cap v^{-1}(y))=0\qquad\mbox{ for \ $\H^q$-a.a.
}y\in\R^d
\end{equation*}
holds, where
\begin{equation*}\label{hf2}\mu_q=n-m+1-(k+\alpha)(q-m+1),
\end{equation*} and
$Z_{v,m}$ denotes the set of $m$-critical points of~$v$:\ \
$Z_{v,m}=\{x\in\R^n\,:\,\rank\nabla v(x)\le m-1\}$.}
\end{ttt}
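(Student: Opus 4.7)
The plan is to adapt the classical Dubovitski\u{\i}--Federer proof of the Morse--Sard theorem to the H\"older category $C^{k,\alpha}$, and to track fiber-level Hausdorff measures via an integral-geometric covering inequality. By $\sigma$-additivity of Hausdorff measure, it suffices to prove, for any compact $K\subset Z_{v,m}$ contained in a fixed ball $B_{R}\subset\R^{n}$, that
$$\int_{\R^{d}}\H^{\mu_{q}}\bigl(K\cap v^{-1}(y)\bigr)\,d\H^{q}(y)=0.$$
At each $x_{0}\in Z_{v,m}$, the $C^{k,\alpha}$ Taylor expansion yields $v(x)=P_{x_{0}}(x)+R_{x_{0}}(x)$ with $|R_{x_{0}}(x)|\le L|x-x_{0}|^{k+\alpha}$ and $P_{x_{0}}$ the $k$-th Taylor polynomial; the condition $\rank\nabla v(x_{0})\le m-1$ forces the linear part of $P_{x_{0}}$ to map into some $(m-1)$-dimensional affine subspace $V_{x_{0}}\subset\R^{d}$.

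Next, stratify $Z_{v,m}=\bigsqcup_{\ell=1}^{k}A_{\ell}$ according to the maximal order $\ell$ for which the Taylor terms $\nabla^{i}v(x_{0})$, $i=1,\dots,\ell$, are jointly rank-controlled into a common $(m-1)$-dimensional subspace. On the deepest stratum $A_{k}$, the full $k$-jet of $v-v(x_{0})$ at $x_{0}$ lies in $V_{x_{0}}$, so that
$$\bigl|\pi^{\perp}_{x_{0}}\bigl(v(x)-v(x_{0})\bigr)\bigr|\le C|x-x_{0}|^{k+\alpha},$$
where $\pi^{\perp}_{x_{0}}$ projects onto $V_{x_{0}}^{\perp}$. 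Covering $A_{k}$ by a Besicovitch family $\{B(x_{i},r_{i})\}$ with $x_{i}\in A_{k}$ and $r_{i}\le r$, each image $v(B(x_{i},r_{i}))$ fits into an affine box $\mathcal{B}_{i}$ with $m-1$ tangential sides of length $O(r_{i})$ and $d-m+1$ normal sides of length $O(r_{i}^{k+\alpha})$, giving $\H^{q}(\mathcal{B}_{i})\le Cr_{i}^{m-1+(k+\alpha)(q-m+1)}$ for $q\ge m-1$. Combining with the Hausdorff-content bound $\H^{\mu_{q}}_{\infty}\bigl(B(x_{i},r_{i})\cap v^{-1}(y)\bigr)\le Cr_{i}^{\mu_{q}}$ for $y\in v(B(x_{i},r_{i}))$ and integrating in $y$,
$$\int\H^{\mu_{q}}_{\infty}\bigl(K\cap A_{k}\cap v^{-1}(y)\bigr)\,d\H^{q}(y)\le C\sum_{i}r_{i}^{\,\mu_{q}+m-1+(k+\alpha)(q-m+1)}=C\sum_{i}r_{i}^{\,n}.$$
A dyadic/Vitali refinement of the cover, combined with the finiteness $\H^{q_{*}}\bigl(v(K\cap A_{k})\bigr)<\infty$ (itself a consequence of the same box-sum argument at the critical value $q_{*}=m-1+(n-m+1)/(k+\alpha)$), upgrades this to a vanishing bound of the form $Cr^{\mu_{q}}\H^{q}\bigl(v(K\cap A_{k}^{(r)})\bigr)\to 0$ as $r\downarrow 0$ whenever $\mu_{q}>0$; the extreme cases $\mu_{q}\le 0$ reduce to the companion image-measure vanishing $\H^{q}\bigl(v(K\cap A_{k})\bigr)=0$ for $q>q_{*}$.

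The main obstacle is the shallow strata $A_{\ell}$, $\ell<k$, where naive Taylor truncation yields only normal decay of order $r_{i}^{\ell+1}$, insufficient for the sharp exponent $k+\alpha$. Federer's inductive remedy is to restrict $v$ to lower-dimensional submanifolds on which higher-order rank conditions hold automatically (essentially freezing the coordinates in which the rank of $\nabla v$ has already been attained), recursing into a Morse--Sard problem of strictly smaller dimension. In the H\"older setting, one must verify that each reduction preserves the $C^{k,\alpha}$ Taylor estimate after a bi-Lipschitz reparametrization adapted to the varying $V_{x_{0}}$; this inductive bookkeeping, balancing the strata against the covering radii and the scaling exponents to yield the homogeneous degree $n$ in the final sum, is the technical heart of the proof.
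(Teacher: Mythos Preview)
First, note that this paper does not itself prove Theorem~\ref{DST-H}; the result is quoted from the companion paper~\cite{FKR}, so there is no in-paper proof to compare against directly.

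Your outline follows the classical Dubovitski\u{\i}--Federer template (stratify by order of vanishing, Taylor-expand, cover, sum), which is the natural route. However, there is a genuine gap at the core of your argument for the deepest stratum~$A_k$. The covering estimate you write down yields
\[
\int \H^{\mu_q}_\infty\bigl(K\cap A_k\cap v^{-1}(y)\bigr)\,d\H^q(y)\ \le\ C\sum_i r_i^{\,n}\ \le\ C\,\Le^n(K),
\]
which is \emph{bounded}, not zero. Your proposed upgrade --- that a refinement gives ``$Cr^{\mu_q}\,\H^q\bigl(v(K\cap A_k^{(r)})\bigr)\to 0$ whenever $\mu_q>0$'' --- tacitly uses $\H^q\bigl(v(K\cap A_k)\bigr)<\infty$ for $q<q_*$, and that is precisely what is \emph{not} available: a set of finite $\H^{q_*}$-measure can have infinite $\H^q$-measure for every $q<q_*$. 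In the $C^k$ case ($\alpha=0$) this gap closes because the modulus of continuity $\omega(r)$ of $\nabla^k v$ contributes an extra factor $\omega(r)^q\to 0$ in front of $\sum_i r_i^{\,n}$; for $0<\alpha\le 1$ that factor has already been absorbed into the exponent $k+\alpha$ defining $\mu_q$, so a genuinely different mechanism is required, and your sketch does not supply one.

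Two further points. First, integrating against $\H^q$ on $\R^d$ for $q<d$ is delicate since $\H^q$ is not $\sigma$-finite there; the cleaner device used throughout this paper is the set function $\Phi(E)=\inf\sum(\diam D_j)^{\mu}\bigl[\diam v(D_j)\bigr]^{q}$ of Theorem~\ref{FubN}, whose vanishing yields the fiberwise conclusion without any integration. But even via~$\Phi$ your estimate gives only $\Phi(A_k)\le C\sum_i r_i^{\,n}<\infty$, so the same gap remains. Second, your stratification is not precisely defined (what does ``jointly rank-controlled into a common $(m{-}1)$-dimensional subspace'' mean for a higher-order tensor $\nabla^i v$?), and the inductive reduction on the shallow strata --- including the assertion that the $C^{k,\alpha}$ Taylor estimate survives bi-Lipschitz reparametrization adapted to the varying $V_{x_0}$ --- is asserted rather than argued; this reduction is where most of the actual work lies.
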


Here and in the following we interpret $\H^\beta$ as the counting
measure when $\beta\le0$. Let us note, that for the classical $C^k$-case, i.e., when
$\alpha=0$, the behavior of the function $\mu_q$ is very natural:
\begin{equation*}\label{dub3-q}
\begin{array}{lcr}
\mu_q=0\qquad\mbox{ for }q=\b=m-1+\frac{n-m+1}k\ \ \qquad\mbox{(Dubovitski\u{\i}--Federer Theorem~1967)};\\
[13pt]
\mu_q<0 \qquad\mbox{ for }q>\b\ \ \ \qquad\mbox{[ibid.]};\\
[13pt] \mu_q=n-m-k+1 \qquad\mbox{ for }q=m\ \ \qquad\mbox{(Dubovitski\u{\i} Theorem~1957)}; \\
[13pt]\mu_q=n-m+1\qquad\mbox{ for }q=m-1.\end{array}
\end{equation*}
The last value cannot be improved in view of the trivial example
of a linear mapping $L\colon \R^n\to\R^d$ of rank $m-1$.

Thus, Theorem~\ref{DST-H} contains all the previous theorems
(Morse--Sard, Dubovitski\u{\i}--Federer, and even the Bates theorem for
$C^{k,1}$-Lipschitz functions~\cite{Bates}) as particular cases.

Intuitively, the sense of this bridge theorem again is very close to
{\it Heisenberg's uncertainty principle} in theoretical physics:
the more precise information we received on measure of the image
of the critical set, the less precisely the preimages are
described, and vice versa.

The above bridge theorem was extended also in~\cite{FKR} to the case of fractional Sobolev spaces. 

Let $k\in\N$, $1<p<\infty$  and $0<\alpha<1$. There exist two main
types of fractional Sobolev spaces $W^{k+\alpha}_p(\R^n)$ (which
is a Sobolev analog of classical Holder classes $\Cc$), namely,
{\it (Bessel) potential spaces} $\Le^{k+\alpha}_p$ \ (see above) \ and {\it Besov
spaces} $B^{k+\al}_{p,s}$. We are not going to discuss many
elegant relations of these spaces. For our purposes it is
sufficient to indicate that
\begin{equation}\label{fs1}
\begin{array}{lcr}
\Le^{k+\alpha}_p\subset B^{k+\al}_{p,\infty};\\
[13pt] \forall s\in (1,\infty)\qquad B^{k+\al}_{p,s}\subset
B^{k+\al}_{p,\infty}.\end{array}
\end{equation}

So if one  proves the Bridge F.-D.-theorem for the largest space
$B^{k+\al}_{p,\infty}(\R^n,\R^d)$ (see below its definition~\ref{DST-df}\,), then automatically the result
will be true for any other $(k+\alpha,p)$-fractional Sobolev space
of above kind. 

In \cite{FKR} we obtained the following analog of the Bridge--Morse--Sard--Dubovitski\u{\i}--Federer Theorem for the Sobolev (items (i)-(ii)) and 
fractional Sobolev (items (iii)--(iv)) cases:

\begin{ttt}[\cite{FKR}]\label{DFT-F}{\sl Let
$m\in\{1,\dots,n\}$, \,$k\ge1$, \,$d\ge m$, \,$0\le\al<1$, \,$p\ge1$
\,and \,let $v:\R^n\to \R^d$ be a~mapping 
for which one of the following cases holds:

\begin{itemize}
\item[(i)] \,$\al=0$, \,$kp>n$, \,and \,$v\in W^k_p(\R^n,\R^d)$;

\item[(ii)] \,$\al=0$, \,$kp=n$, \,and \,$v\in W^k_{p,1}(\R^n,\R^d)$;

\item[(iii)] \,$0<\al<1$, \,$p>1$, \,$(k+\al)p>n$, \,and \,$v\in \BBB$;

\item[(iv)] \,$0<\al<1$, \,$p>1$, \,$(k+\al)p=n$, \,and \,$v\in \Le^{k+\al}_{p,1}(\R^n,\R^d)$.
\end{itemize}
Then for any $q\in (m-1,\infty)$ the equality
\begin{equation*}\label{sob-hf1-hhh}
\H^{\mu_q}(Z_{v,m}\cap v^{-1}(y))=0\qquad\mbox{ for \ $\H^q$-a.a.
}y\in\R^d
\end{equation*}
holds, where again 
\begin{equation*}\label{sob-hf2-hhh}\mu_q=n-m+1-(k+\alpha)(q-m+1),
\end{equation*} and
$Z_{v,m}$ denotes the set of $m$-critical points of~$v$:\ \
$Z_{v,m}=\{x\in\R^n\setminus A_v\,:\,\rank\nabla v(x)\le m-1\}$.
}
\end{ttt}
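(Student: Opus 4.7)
The plan is to reduce the statement to the classical bridge Theorem~\ref{DST-H} for $\Cc$-mappings by means of a Lusin-type approximation, and then to dispose of the remaining null set either by the $(\tau,\sigma)$-$N_*$-properties of Theorems~\ref{LPT1-m}--\ref{LPT2-m} or by repeating the covering argument underlying Theorem~\ref{DST-H} directly on that set.

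\textbf{Stage 1 ($\Cc$-Lusin approximation).} In each of the cases (i)--(iv) I would produce a countable family of closed sets $E_j\subset\R^n$ with $\H^n(\R^n\setminus\bigcup_j E_j)=0$ and mappings $\tilde v_j\in\Cc(\R^n,\R^d)$ such that
\begin{equation*}
v(x)=\tilde v_j(x),\qquad \nabla^i v(x)=\nabla^i\tilde v_j(x),\quad i=1,\dots,k,\quad \forall x\in E_j.
\end{equation*}
Case (i) is the classical $C^k$-Lusin theorem for supercritical Sobolev functions (Bojarski--Haj\l asz--Ziemer); case (iii) rests on a Calder\'on-type pointwise $C^{k,\alpha}$-estimate coming from sharp maximal-function inequalities for $\BBb$, combined with the Whitney extension theorem to build $\tilde v_j$. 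The Lorentz cases (ii), (iv) follow the same route but use the summability $L_{p,1}$ to secure the pointwise estimate at the borderline integrability $(k+\alpha)p=n$.

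\textbf{Stage 2 (bridge theorem on each $E_j$).} Theorem~\ref{DST-H} applied to $\tilde v_j$ yields
\begin{equation*}
\H^{\mu_q}\bigl(Z_{\tilde v_j,m}\cap\tilde v_j^{-1}(y)\bigr)=0\qquad\text{for }\H^q\text{-a.a.\ }y\in\R^d,
\end{equation*}
and since $v$ and $\tilde v_j$ share their derivatives of order $\le k$ on $E_j$ outside the non-differentiability set $A_v$, the inclusion $Z_{v,m}\cap E_j\setminus A_v\subset Z_{\tilde v_j,m}$ transfers the estimate to $v$:
\begin{equation*}
\H^{\mu_q}\bigl(Z_{v,m}\cap E_j\cap v^{-1}(y)\bigr)=0\qquad\text{for }\H^q\text{-a.a.\ }y.
\end{equation*}

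\textbf{Stage 3 (exceptional set) and main obstacle.} The residual set $A=(\R^n\setminus\bigcup_j E_j)\cup A_v$ has $\H^n(A)=0$, hence $\H^\tau(A)=0$ for every $\tau\in(0,n]$. I would invoke the $(\tau,\sigma)$-$N_*$-property of Theorems~\ref{LPT1-m}--\ref{LPT2-m} (or its Besov counterpart) with $\tau$ chosen so that $\tau(1-q/\sigma(\tau))\le\mu_q$, splitting into the subrange $q<q_\circ$ (where $\mu_q>0$ and a choice $\tau<\tt$ works via the formula $\sigma(\tau)=p\tau/((k+\alpha)p-n+\tau)$) and the subrange $q\ge q_\circ$ (where $\mu_q\le0$, $\H^{\mu_q}$ is interpreted as the counting measure, and the ordinary $(\tau,\sigma)$-$N$-property of Theorems~\ref{LPT1}--\ref{LPT2} already forces $A\cap v^{-1}(y)=\emptyset$ for $\H^q$-a.a.\ $y$ once $q>\sigma(\tau)$). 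Either way, $\H^{\mu_q}(A\cap v^{-1}(y))=0$ for $\H^q$-a.a.\ $y$, and countable subadditivity combines this with Stage 2 to conclude. The hardest step is Stage 1 in the critical Lorentz cases (ii), (iv): constructing the Lusin approximation preserving all derivatives up to order $k$ hinges on a sharp Calder\'on-type pointwise $\Cc$-estimate, available precisely because of the Lorentz summability $L_{p,1}$; the subtlety in Stage 3 is ensuring, across the transition $q=q_\circ$, that the two subranges patch together to give the claimed uniform conclusion.
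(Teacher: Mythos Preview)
First, note that the present paper does not contain a proof of this theorem: it is quoted from~\cite{FKR}, so there is no ``paper's own proof'' here to compare against directly. Your three-stage outline---Lusin approximation, smooth bridge theorem on the good pieces, $N_*$-property on the residual set---is indeed the architecture used in~\cite{FKR} and its predecessors \cite{BKK2}, \cite{HKK}, \cite{KK15}.

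That said, Stage~3 as written contains a genuine error. You assert that $\H^n(A)=0$ and ``hence $\H^\tau(A)=0$ for every $\tau\in(0,n]$.'' This implication is false: any hyperplane has $\H^n$-measure zero but infinite $\H^\tau$-measure for every $\tau<n$. Without $\H^\tau(A)=0$ for the particular $\tau$ you later feed into the $(\tau,\sigma)$-$N_*$-property, the rest of Stage~3 does not go through.

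What actually makes the argument work is that the Lusin-type approximation in Stage~1 is much sharper than ``complement of Lebesgue measure zero.'' The approximation theorems in \cite{BHS1}, \cite{Sw}, \cite{KK15} allow one to choose the closed sets $E_j$ so that the residual set has zero \emph{Bessel capacity} of the appropriate order, which forces $\H^\tau\bigl(\R^n\setminus\bigcup_j E_j\bigr)=0$ for every $\tau>\tt$ in cases (i), (iii) and for every $\tau\ge\tt$ in the Lorentz cases (ii), (iv). This is exactly the input one needs for Theorems~\ref{LPT1-m}--\ref{LPT2-m}. Note in particular that your proposed choice ``$\tau<\tt$'' in the subrange $q<\b$ is precisely the range where the Lusin approximation does \emph{not} supply $\H^\tau(A)=0$; the correct bookkeeping (compare the proof of Theorem~\ref{DFT-F-negl} in this paper, which treats the closely related set~$A_v$) is to take $\tau=q+\mu_q$, observe via~(\ref{negl1}) that $\tau\ge\tt$, and use the supercritical branch $\sigma(\tau)=\tau$ of the $N_*$-property, which yields $\H^{\tau-q}=\H^{\mu_q}$ on the preimages.
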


Here $A_v$ means the set of nondifferentiability points for~$v$.
Recall, that by approximation results (see, e.g.,
\cite{Sw} \,and \,\cite{KK15}\,) under conditions of Theorem~\ref{DFT-F} the equalities
\begin{eqnarray*}\label{hf0}\!\!\!\!\!\!\!\!\!\!\!\!\!\!\!\!\!\!\!\H^\tau(A_v)=0\qquad\forall\tau>\tt:=n-(k+\al-1)p\qquad\mbox{in cases (i), (iii)};\\
\label{hf0--}\!\!\!\!\!\!\!\!\H^{\tt}(A_v)=\H^p(A_v)=0\qquad\tt:=n-(k+\al-1)p=p\qquad\mbox{in cases (ii), (iv)}
\end{eqnarray*}
are valid (in particular, $A_v=\emptyset$ if $(k+\al-1)p>n$).
Our purpose is to prove that the impact of the "bad"\ set~$A_v$ is negligible
in the~ Bridge D.-F. Theorem~\ref{DFT-F}, i.e., that the following statement holds:

\begin{ttt}\label{DFT-F-negl}{\sl Let the conditions of Theorem~\ref{DFT-F} be fulfilled for a~function $v:\R^n\to\R^d$. Then
\begin{equation*}\label{dub7-qqqq} \H^{\mu_q}(A_v\cap
v^{-1}(y))=0\qquad\mbox{ for \ $\H^q$--a.a. }y\in\R^d
\end{equation*}
for any $q>m-1$.
}
\end{ttt}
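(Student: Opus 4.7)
My plan is to derive the conclusion by applying the $(\tau,\sigma(\tau))$-$N_*$-property (established in Theorems~\ref{LPT1-m} and~\ref{LPT2-m}, together with their Besov counterparts that govern case~(iii), and Remark~\ref{LPT1-rem*} for the classical edge $k=1$, $\alpha=0$, $p>n$) to the non-differentiability set $A_v$. By the approximation results recalled in the paragraph preceding the statement,
\begin{equation*}
\H^\tau(A_v)=0\text{ for every }\tau>\tt\text{ in cases (i), (iii)},\qquad \H^{\tt}(A_v)=0\text{ in cases (ii), (iv)}.
\end{equation*}
If $(k+\alpha-1)p>n$ then $A_v=\emptyset$ and the statement is trivial, so I assume $\tt\ge 0$. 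Writing $\beta:=k+\alpha$, the crucial algebraic identity is
\begin{equation*}
\mu_q-(\tt-q)=(\beta-1)\bigl(p+m-1-q\bigr),
\end{equation*}
and since $\beta p\ge n\ge n-m+1$ one has $\tt\le p+m-1$, with strict inequality except in the subcase $\beta p=n$ and $m=1$.

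I would split according to the position of $q$ relative to $\tt$. For $q>\tt$, the ordinary $(\tau,\tau)$-$N$-property (a direct consequence of $N_*$) applied at some $\tau\in(\tt,q]$, or at $\tau=\tt$ in cases (ii), (iv), yields $\H^\tau(v(A_v))=0$; by monotonicity of Hausdorff measures in the exponent this gives $\H^q(v(A_v))=0$, so $A_v\cap v^{-1}(y)=\emptyset$ for $\H^q$-a.e.\ $y\in\R^d$ and the conclusion is immediate (whatever the sign of $\mu_q$).

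For $m-1<q\le\tt$ I pick $\tau\in(\tt,\mu_q+q]$; by the identity above this interval is non-empty whenever $q<p+m-1$, which holds throughout this range except in the extremal subcase noted. For such $\tau$ we have $\sigma(\tau)=\tau\ge q$ and $\H^\tau(A_v)=0$, so the $N_*$-property yields
\begin{equation*}
\H^{\tau-q}\bigl(A_v\cap v^{-1}(y)\bigr)=0\qquad\text{for }\H^q\text{-a.e.\ }y\in\R^d,
\end{equation*}
and monotonicity together with $\tau-q\le\mu_q$ gives $\H^{\mu_q}(A_v\cap v^{-1}(y))=0$, as required.

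The main obstacle is the borderline subcase $\beta p=n$, $m=1$, $q=\tt$ (and, in parallel, the classical edge $\beta=1$): here the interval $(\tt,\mu_q+q]$ collapses to the single point $\tt$, and one must invoke the $N_*$-property at the critical exponent $\tau=\tt$ itself. Theorem~\ref{LPT1-m} does not cover this value, but the Lorentz-scale refinement Theorem~\ref{LPT2-m} does, and Remark~\ref{LPT1-rem*} supplies it in the $\beta=1$ edge. This is precisely where the sharper Lorentz-norm formulation of the underlying $N_*$-theorems becomes essential.
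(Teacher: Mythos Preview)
Your argument is correct and follows essentially the same route as the paper: both proofs rest on the inequality $\mu_q+q\ge\tt$ (your identity $\mu_q-(\tt-q)=(\beta-1)(p+m-1-q)$ is just a rewriting of the paper's~(\ref{negl1})--(\ref{negl3})) and then invoke the $(\tau,\tau)$--$N_*$--property with $\tau$ in $[\tt,\mu_q+q]$, treating the borderline subcases $\beta=1$ and $\beta p=n,\ m=1,\ q=\tt$ via Remark~\ref{LPT1-rem*} and Theorem~\ref{LPT2-m} respectively.

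The only organisational difference is that you split on $q>\tt$ versus $q\le\tt$ (handling the former by the plain $N$--property and the observation $\H^q(v(A_v))=0$), whereas the paper first restricts to $q\le q_\circ$ via Remark~\ref{remq0} and then runs through the four function-space cases (i)--(iv) separately; your packaging is arguably tidier. One wording point: there is no separately stated ``Besov counterpart'' of Theorems~\ref{LPT1-m}--\ref{LPT2-m}; case~(iii) is handled, as in the paper's Case~III, through the embedding $B^{k+\alpha}_{p,\infty}\subset\Le^{k+\alpha-\varepsilon}_p$ for every $\varepsilon>0$, which recovers the $(\tau,\tau)$--$N_*$--property for all $\tau>\tt$.
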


\

\begin{rem}\label{remq0}Note, that since $\mu_q\le0$ \,for \,$q\ge\b=m-1+\frac{n-m+1}{k+\alpha}$, the assertions of 
Theorems~\ref{DFT-F}--\ref{DFT-F-negl} are equivalent to the equality
$0=\H^q\bigl[v(A_v\cup Z_{v,m})\bigr]$ \ for \ $q\ge\b$, so  
it is sufficient to check the assertions of 
Theorems~\ref{DFT-F}--\ref{DFT-F-negl} for
\,$q\in(m-1,\b]$ only.
\end{rem}

\

Finally, let us comment briefly that the merge ideas for the proofs 
are from our previous papers~\cite{BKK2}, \cite{KK3,KK15}
and~\cite{HKK}. In particular, the joint papers \cite{BKK,BKK2} by
one of the authors with J.~Bourgain contain many of the key ideas
that allow us to consider nondifferentiable Sobolev mappings. For
the implementation of these ideas one relies on estimates for the
Hardy--Littlewood maximal function in terms of Choquet type
integrals with respect to Hausdorff capacity. In order to take
full advantage of the Lorentz context we exploit the recent
estimates from~\cite{KK15} (recalled in Theorem~\ref{lb7} below,
see also ~\cite{Ad} for the case~$p=1$). 
\bigskip

\noindent {\em Acknowledgment.} M.K. was partially supported by
the Ministry of Education and Science of the Russian Federation
(Project number 1.8126.2017/8.9). The main part of the paper was written
during a visit of M.K. to the University of Campania "Luigi
Vanvitelli" in~2017, and he is very thankful for the hospitality.
Also the authors are very grateful to the~academician Sergei Konyagin and to the professor Sergey
Vodop'yanov for the fruitful discussion concerning the lacunary Fourier series and Besov
spaces.

\section{Preliminaries}
\label{prel}

\noindent

\noindent By  an {\it $n$--dimensional interval} we mean a closed
cube in $\R^n$ with sides parallel to the coordinate axes. If $Q$
is an $n$--dimensional cubic interval then we write $\ell(Q)$ for
its sidelength.

For a subset $S$ of $\R^n$ we write $\Le^n(S)$ for its outer
Lebesgue measure (sometimes we use the symbol~$\meas S$ for the
same object\,). The $m$--dimensional Hausdorff measure is denoted
by $\H^m$ and the $m$--dimensional Hausdorff content by
$\H^{m}_{\infty}$. Recall that for any subset $S$ of $\R^n$ we
have by definition
$$
\H^m (S)=\lim\limits_{t\searrow 0}\H^m_t (S) = \sup_{t >0}
\H^{m}_{t}(S),
$$
where for each $0< t \leq \infty$,
$$
\H^m_t (S)=\inf\left\{ \sum_{i=1}^\infty(\diam S_i)^m\ :\ \diam
S_i\le t,\ \ S \subset\bigcup\limits_{i=1}^\infty S_i \right\}.
$$
It is well known that $\H^n(S)  = \H^n_\infty(S)\sim\Le^n(S)$ for
sets~$S\subset\R^n$ \ (\,"$\sim$"\ \  means, here and in the following, that these values have upper and lower bounds with positive constants independent on the set~$E$\,).

By $L_p(\R^n)$,  $1\le p \le \infty$, we will denote the usual Lebesgue space equipped with the norm $\|\cdot \|_{L_p}$. The notation  $\|f\|_{L_{p}(E)}$ means $\|1_E\cdot f\|_{L_{p}}$,
where $1_E$ is the indicator function of~$E$.

Working with locally integrable functions, we always assume that
the precise representatives are chosen. If $w\in
L_{1,\loc}(\Omega)$, then the precise representative $w^*$ is
defined for {\em all} $x \in \Omega$ by
\begin{equation*}
\label{lrule}w^*(x)=\left\{\begin{array}{rcl} {\displaystyle
\lim\limits_{r\searrow 0} \dashint_{B(x,r)}{w}(z)\,\dd z}, &
\mbox{ if the limit exists
and is finite,}\\
 0 \qquad\qquad\quad & \; \mbox{ otherwise},
\end{array}\right.
\end{equation*}
where the dashed integral as usual denotes the integral mean,
$$
\dashint_{B(x,r)}{ w}(z) \, \dd
z=\frac{1}{\Le^n(B(x,r))}\int_{B(x,r)}{ w}(z)\,\dd z,
$$
and $B(x,r)=\{y: |y-x|<r\}$ is the open ball of radius $r$
centered at $x$. Henceforth we omit special notation for the
precise representative writing simply $w^* = w$.

\noindent For $0\le \beta <n$, the {\it fractional maximal function} of $w$ of order $\beta$ is given by
\begin{equation}\label{maximal}
M_\beta w\,(x)=\sup_{r>0}\, r^\beta \dashint_{B(x,r)}|w(z)|dz\,.
\end{equation}
When $\beta=0$, $M_0$ reduce to the usual Hardy-Littlewood maximal operator $M$. 
\medskip

The Sobolev space $\WW^{k}_{p} (\R^n,\R^d)$ is as usual defined as
consisting of those $\R^d$-valued functions $f\in \LL_p(\R^n)$
whose distributional partial derivatives of orders $l\le k$ belong
to $\LL_p(\R^n)$ (for detailed definitions and differentiability
properties of such functions see, e.g., \cite{EG}, \cite{M},
\cite{Ziem}, \cite{Dor}). Denote by $\nabla^k f$ the vector-valued
function consisting of all $k$-th order partial derivatives of $f$
arranged in some fixed order. However, for the case of first order
derivatives $k=1$ we shall often think of $\nabla f(x)$ as the
Jacobi matrix of $f$ at $x$, thus the $d \times n$ matrix whose
$r$-th row is the vector of partial derivatives of the $r$-th
coordinate function.

We use the norm
$$
\|f\|_{\WW^{k}_{p}}=\|f\|_{\LL_p}+\|\nabla
f\|_{\LL_p}+\dots+\|\nabla^kf\|_{\LL_p},
$$
 and unless otherwise specified all norms on the
spaces $\R^s$ ($s \in \N$) will be the usual euclidean norms.

If $k<n$, then it is well-known that functions from Sobolev spaces
$\WW^{k}_{p}(\R^n)$ are continuous for $p>\frac{n}k$ and could be
discontinuous for $p\le \p=\frac{n}k$ (see, e.g., \cite{M,Ziem}).
The Sobolev--Lorentz space $\WW^{k}_{\p,1}(\R^n)\subset
\WW^{k}_{\p}(\R^n)$ is a refinement of the corresponding Sobolev
space. Among other things functions that are locally in
$\WW^{k}_{\p,1}$ on $\R^n$ are in particular continuous.

Here we only mentioned the Lorentz space $\LL_{p,1}$, and in this
case one may rewrite the norm as (see for instance
\cite[Proposition 3.6]{Maly2})
\begin{equation*}\label{lor1}
\|f\|_{L_{p,1}}=
\int\limits_0^{+\infty}\bigl[\Le^n(\{x\in\R^n:|f(x)|>t\})\bigr]^{
\frac1p} \, \dd t.
\end{equation*}
As for Lebesgue norm we set $\|f\|_{L_{p,1}(E)}:=\|1_E\cdot f\|_{L_{p,1}}$.
Of course, we have the inequality
\begin{equation}\label{lor1---}
\|f\|_{L_p}\le \|f\|_{L_{p,1}}.\end{equation}
Moreover, recall, that by properties of Lorentz spaces, the standard
estimate \begin{equation} \label{max-1} \|M f\|_{L_{p,q}}\le
C\,\|f\|_{L_{p,q}}
\end{equation}
holds for $1<p<\infty$ (see, e.g., \cite[Theorem 4.4]{Maly2}\,).

Denote by $\WW^{k}_{p,1}(\R^n)$ the space of all functions $v\in
\WW^k_p(\R^n)$ such that in addition the Lorentz norm~$\|\nabla^k
v\|_{\LL_{p,1}}$ is finite.

\subsection{On the~largest Besov spaces~$B^{k+\al}_{p,\infty}(\R^n,\R^d)$}

Recall the following definition which was used in the Bridge--Morse--Sard Theorem~\ref{DFT-F}.

\begin{df}\label{DST-df}{\sl Let $k\in\N$, $1<p<\infty$,  and $0<\alpha<1$. 
We will say that a~mapping  $v:\R^n\to\R^d$ belongs to the class $B^{k+\al}_{p,\infty}(\R^n,\R^d)$, \ if \
$v\in W^k_p(\R^n,\R^d)$ and there exists a~constant $C$ such that
for any $t>0$ the estimate
\begin{equation*}\label{fs2}
\|\Omega^k_v(\cdot,t)\|_{L^p(\R^n)}\le C\,t^{\alpha}
\end{equation*}
holds,  where
\begin{equation*}\label{fs3}
\Omega^k_v(x,t)=\dashint_{Q(x,t)}|\nabla^kv(x)-\nabla^kv(Q)|\,dx,
\end{equation*}
and $\nabla^kv(Q)$ denotes the mean value of \  $\nabla^kv$ over
the~$n$-dimensional cube~$Q=Q(x,t)$ centered at~$x$ of
size~$t=\ell(Q)$.}
\end{df}

This is the largest space among  other $(k+\alpha,p)$-fractional Sobolev space $B^{k+\al}_{p,q}$ and $\Le^{k+\al}_p$ (see~(\ref{fs1})\,).

\subsection{On potential spaces~$\LK$}

In the paper we deal with {\it(Bessel)-potential space} $\LK$ with
$\al>0$. Recall that a~function $v:\R^n\to\R^d$ belongs to the
space $\LK$, if it is a convolution of the Bessel
kernel~$K_{\alpha}$ with a function~$g\in L_p(\R^n)$:
$$v=\GG_\alpha(g):=K_\al*g,$$
where
$\widehat{K_\alpha}(\xi)=(1+4\pi^2\xi^2)^{-\alpha/2}$. In
particular, \
$$\|v\|_{\LK}:=\|g\|_{L_p}.$$
It is well known that
\begin{equation}\label{fN1----} \LK(\R^n)=W^\al_p(\R^n)\qquad\mbox{ if }\ \al\in\N\quad\mbox{and}\quad 1<p<\infty\,,
\end{equation}
and we use the agreement that 
$\LK(\R^n)=L_p(\R^n)$ when $\al=0$. Moreover, the following well-known result holds:

\begin{ttt}[see, e.g., Lemma~3 on page 136 in~\cite{St}]\label{propert-pot1}{\sl
Let $\alpha\ge1$ and $1<p<\infty$. Then $v\in
\LK(\R^n)$ iff $v\in\Le^{\al-1}_p(\R^n)$ \ and \ $\frac{\partial v}{\partial x_j}\in \Le^{\al-1}_p(\R^n)$ for every $j=1,\dots,n$. }
\end{ttt}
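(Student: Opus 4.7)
The plan is to prove the equivalence by Fourier multiplier analysis. Since $v\in\Le^\alpha_p$ means exactly that $(1+4\pi^2|\xi|^2)^{\alpha/2}\hat v$ is the Fourier transform of an $L_p$ function, the statement reduces to showing that the symbols which transform between the levels $\alpha$ and $\alpha-1$ (together with differentiation) are $L_p$-bounded Fourier multipliers for $1<p<\infty$. The key observation is that the symbol
$$m_j(\xi)=\frac{2\pi i\,\xi_j}{(1+4\pi^2|\xi|^2)^{1/2}}$$
is smooth away from the origin and, on every annulus $|\xi|\sim R$, its partial derivatives of order $\beta$ are bounded by $C_\beta |\xi|^{-|\beta|}$ (direct inspection: bounded near $0$ because $\xi_j/(1+4\pi^2|\xi|^2)^{1/2}$ is $C^\infty$ there, and decaying at infinity since $m_j\to\pm i$). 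Hence the H\"ormander--Mihlin theorem yields that the associated convolution operator $T_j$ is bounded on $L_p(\R^n)$ for $1<p<\infty$. This will be the only non-trivial analytic input; the rest is bookkeeping with Bessel kernels and the semigroup identity $K_\alpha*K_\beta=K_{\alpha+\beta}$, which is clear on the Fourier side.

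For the forward direction, suppose $v=K_\alpha*g$ with $g\in L_p$. Since $K_1\in L^1(\R^n)$, Young's inequality gives $h:=K_1*g\in L_p$; then $v=K_{\alpha-1}*h\in\Le^{\alpha-1}_p$. For the partial derivatives, write on the Fourier side
$$\widehat{\partial_j v}(\xi)=2\pi i\,\xi_j\,\hat v(\xi)=\frac{2\pi i\,\xi_j}{(1+4\pi^2|\xi|^2)^{1/2}}\cdot(1+4\pi^2|\xi|^2)^{-(\alpha-1)/2}\,\widehat{T_j^*\,g}(\xi),$$
more concretely $\partial_jv=K_{\alpha-1}*(T_jg)$ with $T_jg\in L_p$ by the Mihlin boundedness above. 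Hence $\partial_j v\in\Le^{\alpha-1}_p$ as required.

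For the reverse direction, assume $v,\partial_1v,\dots,\partial_n v\in\Le^{\alpha-1}_p$, so there exist $g_0,g_1,\dots,g_n\in L_p$ with $v=K_{\alpha-1}*g_0$ and $\partial_jv=K_{\alpha-1}*g_j$. Use the algebraic identity
$$(1+4\pi^2|\xi|^2)^{1/2}=\frac{1}{(1+4\pi^2|\xi|^2)^{1/2}}+\sum_{j=1}^n\frac{2\pi\xi_j}{(1+4\pi^2|\xi|^2)^{1/2}}\cdot 2\pi\xi_j,$$
and multiply through by $(1+4\pi^2|\xi|^2)^{(\alpha-1)/2}\hat v$ to obtain
$$(1+4\pi^2|\xi|^2)^{\alpha/2}\hat v(\xi)=\widehat{K_1*g_0}(\xi)+\sum_{j=1}^n\frac{2\pi\xi_j}{(1+4\pi^2|\xi|^2)^{1/2}}\,\frac{1}{i}\,\hat g_j(\xi).$$
The first summand is the Fourier transform of $K_1*g_0\in L_p$ (again since $K_1\in L^1$), while each term in the sum lies in $L_p$ by the same Mihlin bound applied to $g_j$. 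Therefore $(1+4\pi^2|\xi|^2)^{\alpha/2}\hat v=\hat g$ for some $g\in L_p$, i.e.\ $v\in\Le^\alpha_p$.

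The only delicate step is the verification of the Mihlin condition for the family $\{m_j\}$ (and for the auxiliary symbol $\tfrac{4\pi^2|\xi|^2}{1+4\pi^2|\xi|^2}$ implicit in the decomposition); this is classical but must be done carefully near $\xi=0$, where a naive differentiation produces terms like $\xi_j/|\xi|$ that one has to avoid by differentiating the full smooth symbol directly rather than splitting into $|\xi|\cdot\xi_j/|\xi|$. All other steps are purely formal manipulations on the Fourier side, justified in the tempered distribution setting since $v\in\Le^{\alpha-1}_p\subset \mathcal S'$.
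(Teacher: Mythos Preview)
Your argument is correct and is precisely the classical Fourier-multiplier proof: the symbols $m_j(\xi)=2\pi i\,\xi_j(1+4\pi^2|\xi|^2)^{-1/2}$ satisfy the H\"ormander--Mihlin condition (smooth at the origin, homogeneous of degree zero at infinity), so $T_j$ is $L_p$-bounded for $1<p<\infty$; combined with $K_1\in L^1$ and the semigroup identity $K_{\alpha-1}*K_1=K_\alpha$, this gives both implications. The paper does not supply its own proof of this statement but simply cites Stein~\cite[p.~136, Lemma~3]{St}, and your proof is exactly the argument given there, so the approaches coincide.

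One minor presentational slip: in your displayed formula for $\widehat{\partial_j v}$ you have written the factor $m_j(\xi)$ \emph{and} also $\widehat{T_j^{*}g}$, which double-counts the multiplier; the correct identity is simply $\widehat{\partial_j v}(\xi)=(1+4\pi^2|\xi|^2)^{-(\alpha-1)/2}\,\widehat{T_j g}(\xi)$, which is what you then state in words (``more concretely $\partial_j v=K_{\alpha-1}*(T_j g)$''). This is cosmetic and does not affect the validity of the argument.
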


The following technical bounds will be used on several
occasions (for the convenience, we prove them in the Appendix\,).

\begin{lem}\label{est-potent1}{\sl
Let $\alpha>1$, \,$\alpha p>n$, \,and \,$p>1$. Suppose that $v\in
\LK(\R^n)$, i.e., \ $v=\GG_\al(g)$ for some $g\in L_p(\R^n)$. Then for every $n$-dimensional cubic
interval $Q\subset \R^n$ with $r=\ell(Q)\le 1$ the estimate
\begin{equation}
\label{est-pot-1} \diam v(Q)\le
C\,\biggl[\|Mg\|_{L_p(Q)}r^{\alpha-\frac{n}p}+\frac1{r^{n-1}}\int\limits_QI_{\al-1}|g|(y)\,dy\,\biggr]
\end{equation}
holds, where the constant~$C$
depends on $n,p,d,\alpha$ only, \ and
$$I_{\beta}f(x):=\int_{\R^n}\frac{f(y)}{|y-x|^{n-
{\beta}}}\,\dd y$$ is  the Riesz potential of order $\beta$. }
\end{lem}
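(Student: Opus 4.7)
The plan is to estimate $|v(x_1)-v(x_2)|$ for arbitrary $x_1,x_2\in Q$ by splitting $g=g\chi_Q+g\chi_{\R^n\setminus Q}$ into an ``inside'' and an ``outside'' part, and correspondingly writing $v=v_1+v_2$ with $v_1=K_\al\ast(g\chi_Q)$ and $v_2=K_\al\ast(g\chi_{\R^n\setminus Q})$. The two summands on the right-hand side of the estimate will come from these two pieces.

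For $v_1$, the Bessel kernel obeys $|K_\al(w)|\le C|w|^{\alpha-n}$ near the origin (its exponential decay at infinity is harmlessly absorbed since $r\le 1$; the case $\alpha\ge n$, where $K_\al$ is bounded, is even simpler). Hölder's inequality with conjugate exponents $p,p'$ then yields
\[ |v_1(x_i)|\le C\int_Q|x_i-y|^{\alpha-n}|g(y)|\,dy\le C\Bigl(\int_Q|x_i-y|^{(\alpha-n)p'}\,dy\Bigr)^{1/p'}\|g\|_{L^p(Q)}\le Cr^{\alpha-n/p}\|g\|_{L^p(Q)}, \]
the integral converging because $(n-\alpha)p'<n$, a consequence of $\alpha p>n$. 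Since $|g|\le Mg$ a.e., $\|g\|_{L^p(Q)}\le\|Mg\|_{L^p(Q)}$, whence $|v_1(x_1)-v_1(x_2)|\le 2Cr^{\alpha-n/p}\|Mg\|_{L^p(Q)}$, giving the first summand. For $v_2$, which is $C^\infty$ on $Q$ (because $g\chi_{\R^n\setminus Q}$ vanishes there), the bound $|\nabla K_\al(w)|\le C|w|^{\alpha-1-n}$ gives the pointwise estimate $|\nabla v_2(z)|\le CI_{\al-1}|g|(z)$ for every $z\in Q$. Applying the fundamental theorem of calculus along the segment $[x_2,x_1]\subset Q$, swapping orders of integration via Fubini, and replacing the resulting line integral of $|\nabla v_2|$ by a $Q$-average should yield
\[ |v_2(x_1)-v_2(x_2)|\le Cr\dashint_Q I_{\al-1}|g|(\xi)\,d\xi=\frac{C}{r^{n-1}}\int_Q I_{\al-1}|g|(\xi)\,d\xi, \]
which is the second summand.

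The main technical obstacle is precisely the sup-to-average reduction for $v_2$: when $y\in\R^n\setminus Q$ lies close to $\partial Q$, $|\nabla K_\al(\xi-y)|$ varies substantially as $\xi\in Q$ moves, so a naive replacement of line or pointwise evaluations by the $Q$-average fails. A workable fix is to split $\R^n\setminus Q$ further into the distant piece $(2Q)^c$, where $|\xi-y|$ is comparable across all $\xi\in Q$ (so the sup and the $Q$-average of $|\nabla K_\al(\xi-y)|$ agree up to constants and the Fubini swap produces $\dashint_Q I_{\al-1}|g|$), and the near-boundary annulus $2Q\setminus Q$, whose contribution is of the same order as the Hölder estimate for $v_1$ and can be absorbed into the first summand via a pointwise Hedberg-type bound $\int_{B(x,cr)}|x-y|^{\alpha-n}|g(y)|\,dy\le Cr^\alpha Mg(x)$ for $x\in Q$. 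The exponential-decay tails of $K_\al$ and $\nabla K_\al$ at infinity produce only strictly lower-order contributions, easily absorbed into the main estimates.
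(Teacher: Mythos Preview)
Your overall strategy---split $v$ into a local piece and a far piece, treat the local piece by H\"older and the far piece by a gradient/mean-value estimate---matches the paper's, and your handling of $g\chi_Q$ and of $g\chi_{(2Q)^c}$ is essentially correct. The gap is precisely the ``near-boundary annulus'' $2Q\setminus Q$, where your proposed absorption into the first summand does not work.

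The Hedberg bound you quote, $\int_{B(x,cr)}|x-y|^{\alpha-n}|g(y)|\,dy\le Cr^\alpha Mg(x)$, is true but only yields $\diam v_{2,\mathrm{near}}(Q)\le Cr^\alpha\sup_{x\in Q}Mg(x)$, and $r^{n/p}\sup_Q Mg$ is \emph{not} controlled by $\|Mg\|_{L^p(Q)}$. A direct H\"older estimate gives instead $r^{\alpha-n/p}\|g\|_{L^p(2Q\setminus Q)}$, and this too is \emph{not} bounded by $\|Mg\|_{L^p(Q)}$: take $g=\lambda^{-n/p}\chi_{B(z_0,\lambda)}$ with $z_0\in 2Q\setminus Q$ at a fixed positive distance from~$Q$ and $\lambda\to 0$; then $\|g\|_{L^p(2Q)}\sim 1$ while $\|Mg\|_{L^p(Q)}\sim\lambda^{n/p'}\to 0$. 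The paper closes this gap with a dedicated lemma (Lemma~\ref{lem-a1}): for $x\in Q$ one decomposes $2Q$ into dyadic annuli $B_j=B(x,r_j)\setminus B(x,r_{j+1})$, observes that $\int_{2Q\cap B_j}|x-y|^{\alpha-n}|g(y)|\,dy\le Cr_j^\alpha Mg(z)$ for \emph{every} $z\in Q\cap B_j$, and then---this is the key point---averages over $z\in Q\cap B_j$, using that $\meas(Q\cap B_j)\gtrsim r_j^n$ since the centre $x$ lies in~$Q$. Summing over $j$ gives $\int_{2Q}|x-y|^{\alpha-n}|g(y)|\,dy\le C\int_Q|x-z|^{\alpha-n}Mg(z)\,dz$, after which H\"older produces $\|Mg\|_{L^p(Q)}r^{\alpha-n/p}$ exactly. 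This annulus-and-average trick, converting an integral of $g$ over $2Q$ into an integral of $Mg$ over $Q$, is the missing ingredient in your argument.
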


Sometimes it is not convenient to work with the~Riesz potential, and we need also the following variant of above estimates. 

\begin{lem}\label{est-potent2}{\sl
Let $\alpha>0$, \,$\alpha p>n$, \,and \,$p>1$. Suppose that $v\in
\LK(\R^n)$, i.e., \ $v=\GG_\al(g)$ for some $g\in L_p(\R^n)$.  Fix arbitrary  $\theta>0$ such that 
$\al+\theta\ge1$. Then for every $n$-dimensional cubic
interval $Q\subset \R^n$ with $r=\ell(Q)\le 1$ the estimate
\begin{equation}
\label{est-pot-2} \diam v(Q)\le
C\,\biggl[\|Mg\|_{L_p(Q)}r^{\alpha-\frac{n}p}+\frac1{r^{n+\theta-1}}\int\limits_QM_{\al-1+\theta}g(y)\,dy\,\biggr]
\end{equation}
holds, where the constant~$C$
depends on $n,p,d,\alpha,\theta$ only. }
\end{lem}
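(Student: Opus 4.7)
The plan is to adapt the proof of Lemma~\ref{est-potent1}, which uses the first--order mean-value inequality for the Bessel kernel~$K_\alpha$ (valid when $\alpha>1$), by replacing that inequality with a fractional analogue that is available under the weaker hypothesis $\alpha+\theta\ge 1$. The key new pointwise ingredient is
\begin{equation*}
|K_\alpha(w_1)-K_\alpha(w_2)|\le C\,|w_1-w_2|^{1-\theta}\bigl[\min(|w_1|,|w_2|)\bigr]^{\alpha-n-1+\theta}
\end{equation*}
for $|w_1|,|w_2|\le 1$, together with an exponentially decaying analogue when $\min(|w_1|,|w_2|)\ge 1$. This is a fractional interpolation between the size bound $|K_\alpha(w)|\lesssim|w|^{\alpha-n}$ and the derivative bound $|\nabla K_\alpha(w)|\lesssim|w|^{\alpha-n-1}$; the assumption $\alpha+\theta\ge 1$ forces $\alpha-n-1+\theta\ge -n$, keeping the resulting singular integral manageable. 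Such an estimate is most cleanly deduced from the heat-semigroup representation $K_\alpha(w)=c_{n,\alpha}\int_0^\infty t^{\alpha/2-1}e^{-t}(4\pi t)^{-n/2}e^{-|w|^2/(4t)}\,dt$ by applying the elementary modulus inequality $|e^{-a}-e^{-b}|\le|a-b|^{1-\theta}(e^{-a}+e^{-b})$ to the Gaussian factors and integrating in~$t$.

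Next I would localize by writing $g=g_1+g_2$ with $g_1=g\cdot 1_{cQ}$ for an enlargement $cQ$ of~$Q$, and correspondingly $v=v_1+v_2$ with $v_i=K_\alpha*g_i$. For the local part~$v_1$, the Morrey--Sobolev embedding $\LK\hookrightarrow C^{\alpha-n/p}$ (valid since $\alpha p>n$) gives $\diam v_1(Q)\le C r^{\alpha-n/p}\|g\|_{L_p(cQ)}$, and a standard comparison yields $\|g\|_{L_p(cQ)}\le C\|Mg\|_{L_p(Q)}$, producing the first term of~(\ref{est-pot-2}). For the global part~$v_2$, the support of $g_2$ lies at distance $\gtrsim r$ from~$Q$, so applying the fractional mean-value estimate pointwise gives, for any reference $\xi\in Q$,
\begin{equation*}
|v_2(x)-v_2(y)|\le C|x-y|^{1-\theta}\int_{|\xi-z|\ge cr}|\xi-z|^{\alpha-n-1+\theta}|g(z)|\,dz
\end{equation*}
(plus an analogous, exponentially decaying tail at large distances).

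To convert this into the form claimed, decompose the outer integral dyadically and use the defining inequality $\int_{B(\xi,\rho)}|g|\le C\rho^{n-\alpha+1-\theta}M_{\alpha-1+\theta}g(\xi)$ on each annulus. The exponential decay of $K_\alpha$ truncates the sum to scales $\lesssim 1$, and a harmless perturbation of the exponent (take $\theta_0$ slightly smaller than~$\theta$ with $\alpha+\theta_0\ge 1$ in the fractional mean-value estimate, while retaining $\theta$ in the fractional maximal function) forces geometric convergence of the dyadic series. This delivers $|v_2(x)-v_2(y)|\le C|x-y|^{1-\theta}M_{\alpha-1+\theta}g(\xi)$ pointwise in~$\xi$. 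Averaging in $\xi$ over~$Q$ and bounding $|x-y|\le\sqrt n\,r$ converts $M_{\alpha-1+\theta}g(\xi)$ into the integral mean $\dashint_Q M_{\alpha-1+\theta}g$, yielding the second term of~(\ref{est-pot-2}).

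The main obstacle is establishing the fractional mean-value estimate for $K_\alpha$ in the regime $\alpha\le 1$ where $\nabla K_\alpha$ is not locally integrable; the heat-semigroup representation handles this cleanly by performing the interpolation inside the $t$-integral rather than in physical space. A secondary technicality is the convergence of the dyadic sum above, which is controlled by the exponent-perturbation trick together with the exponential decay of~$K_\alpha$ at infinity; both explain why the constant $C$ is permitted to depend on $\theta$ in the statement.
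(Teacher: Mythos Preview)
Your proposal contains a genuine gap in the treatment of the local piece~$v_1$. The inequality $\|g\|_{L_p(cQ)}\le C\|Mg\|_{L_p(Q)}$ is false: take $g=N\cdot 1_E$ with $E\subset cQ$ a set of measure~$\epsilon$ sitting at the far edge of~$cQ$, at distance $\sim r$ from every point of~$Q$. Then $\|g\|_{L_p(cQ)}=N\epsilon^{1/p}$, while for $x\in Q$ one has $Mg(x)\lesssim N\epsilon\,r^{-n}$, hence $\|Mg\|_{L_p(Q)}\lesssim N\epsilon\,r^{n/p-n}$, and the ratio $(r^n/\epsilon)^{1-1/p}$ is unbounded as $\epsilon\to0$. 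Thus Morrey's embedding, which only yields $\diam v_1(Q)\le C\,r^{\alpha-n/p}\|g\|_{L_p(cQ)}$, cannot be upgraded to the form stated in~(\ref{est-pot-2}). The paper obtains $\|Mg\|_{L_p(Q)}$ (with domain $Q$, not $cQ$) by a different mechanism: Lemma~\ref{lem-a1} shows directly that $\int_{2Q}|g(y)|\,|x-y|^{\alpha-n}\,dy\le C\int_Q Mg(y)\,|x-y|^{\alpha-n}\,dy$, via a dyadic decomposition of $2Q$ into annuli centred at~$x$, each of which meets $Q$ in a set of comparable volume. Only after this pointwise transfer does H\"older produce the right-hand side.

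Your treatment of the far piece~$v_2$ is workable but is built on a misconception. The restriction $\alpha>1$ in Lemma~\ref{est-potent1} is there because the Riesz potential $I_{\alpha-1}$ in its conclusion only makes sense for $\alpha>1$; it is \emph{not} because the mean-value theorem for $K_\alpha$ breaks down. In fact $K_\alpha\in C^\infty(\R^n\setminus\{0\})$ for every $\alpha>0$, with $|\nabla K_\alpha(w)|\le C|w|^{\alpha-n-1}$ for $0<|w|\le1$, so since $g_2$ vanishes on $2Q$ the ordinary mean-value theorem already gives
\[
\diam v_2(Q)\le C\,r\int_{\R^n\setminus 2Q}\frac{|g(y)|}{|y|^{n-\alpha+1}}\,dy
\]
(this is exactly~(\ref{a-est11}) in the paper's Lemma~\ref{lem-a3}). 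One then inserts $r_j^{\theta}\cdot r_j^{-\theta}$ by hand in the dyadic sum and uses $r_j^{\alpha-1+\theta}\dashint_{B_j}|g|\le M_{\alpha-1+\theta}g(0)$; the series $\sum_j r_j^{-\theta}$ converges for every $\theta>0$ without any perturbation. Your fractional mean-value inequality and heat-kernel argument are correct but unnecessary, and your $\theta_0<\theta$ trick actually fails at the endpoint $\alpha+\theta=1$ (there is no room to lower $\theta_0$ while keeping $\alpha+\theta_0\ge1$), whereas the paper's simpler route covers that case automatically.
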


For reader's convenience, we prove Lemmas~\ref{est-potent1}--\ref{est-potent2} in the
Appendix~\ref{appendix}.

\subsection{On Lorentz potential spaces~$\LLL$}

To cover some other limiting cases,
denote by $\LLL(\R^n,\R^d)$ the space
of functions which could be represented as a~convolution of
the~Bessel potential $K_\al$ with a function~$g$ from the Lorentz
space~$L_{p,1}$; respectively,
$$\|v\|_{\LLL}:=\|g\|_{L_{p,1}}.$$

Because of inequality~(\ref{lor1---}), we have the~evident inclusion
\begin{equation*}\label{lor2}
\LLL(\R^n)\subset \LK(\R^n).\end{equation*}

Since these spaces are not so common, let us discuss briefly some of their properties. We need some technical facts concerning the Lorentz spaces. 

\begin{lem}[see, e.g., \cite{Georgia1} ] \label{lorl-1}{\sl
Let $1<p<\infty$. Then for any $j=1,\dots,n$ the Riesz transform $\mathscr R_j$ is continuous from $L_{p,1}(\R^n)$ \ to \ $L_{p,1}(\R^n)$. 
}
\end{lem}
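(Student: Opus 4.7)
The plan is to combine the classical $L_r$-boundedness of the Riesz transforms with real interpolation in the Lorentz scale. The statement is essentially the assertion that Calder\'on--Zygmund singular integrals, which are bounded on every $L_r(\R^n)$ with $1<r<\infty$, are also bounded on the finer scale $L_{p,q}(\R^n)$ for the same range of $p$ and for every $1\le q\le\infty$; the case $q=1$ is what we need.

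First, recall that for each $j=1,\dots,n$, the Riesz transform $\mathscr R_j$ is a classical Calder\'on--Zygmund singular integral with kernel $c_n\,x_j/|x|^{n+1}$. By the standard Calder\'on--Zygmund theory (see, e.g., \cite{St}), the operator $\mathscr R_j$ maps $L_r(\R^n)$ boundedly into itself for every $1<r<\infty$. Fix the exponent $p$ given in the statement and pick auxiliary exponents $1<p_0<p<p_1<\infty$. Then the pair of strong-type bounds
$$
\mathscr R_j:L_{p_0}(\R^n)\to L_{p_0}(\R^n),\qquad \mathscr R_j:L_{p_1}(\R^n)\to L_{p_1}(\R^n)
$$
holds.

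Second, I would identify $L_{p,1}(\R^n)$ with the real interpolation space $\bigl(L_{p_0}(\R^n),L_{p_1}(\R^n)\bigr)_{\theta,1}$, where $\theta\in(0,1)$ is determined by $\frac1p=\frac{1-\theta}{p_0}+\frac{\theta}{p_1}$. This identification is a classical result in the theory of Lorentz spaces, obtained by computing the $K$-functional of the couple $(L_{p_0},L_{p_1})$ in terms of the nonincreasing rearrangement; it can be found, for example, in Bergh--L\"ofstr\"om or in the reference~\cite{Georgia1}. Applying the Lions--Peetre real interpolation theorem to the strong-type pair above then yields at once the continuity of $\mathscr R_j:L_{p,1}(\R^n)\to L_{p,1}(\R^n)$.

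The main (and essentially only) technical obstacle is the identification $\bigl(L_{p_0},L_{p_1}\bigr)_{\theta,1}=L_{p,1}$, which requires a nontrivial computation of the $K$-functional, but is entirely standard. An alternative, equally clean route would be to use the weak type $(1,1)$ bound for Calder\'on--Zygmund operators together with a strong-type bound at some $p_1>p$, and then invoke the Marcinkiewicz interpolation theorem in its Lorentz form, which gives boundedness on $L_{p,q}$ for all $1\le q\le\infty$ and $1<p<\infty$; either route reduces the lemma to a textbook statement, which is why the proof is referenced out to~\cite{Georgia1}.
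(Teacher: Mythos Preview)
Your proposal is correct and is precisely the standard argument one expects here: bound $\mathscr R_j$ on two $L_r$ spaces with $p_0<p<p_1$ and interpolate using $(L_{p_0},L_{p_1})_{\theta,1}=L_{p,1}$. The paper itself does not supply a proof of this lemma at all---it simply records the statement with a reference to~\cite{Georgia1}---so there is nothing further to compare.
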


\begin{lem}[see, e.g., \cite{Schep1}] \label{lorl-11}{\sl
Let $1<p<\infty$ and $\mu$ be a finite Borel measure on~$\R^n$. Then the convolution transform $f\mapsto f*\mu$ is continuous in the space
$L_{p,1}(\R^n)$ \ and  in \ $\LK(\R^n)$ for all~$\al>0$.
}
\end{lem}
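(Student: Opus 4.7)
The plan is to reduce both halves of the lemma to Minkowski- and Young-type estimates, treating the Bessel-potential case and the Lorentz case independently; the only nontrivial ingredient is the real interpolation identification of the Lorentz scale.

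First I would dispose of the Bessel potential half, which is essentially formal. Writing an arbitrary $v\in \LK(\R^n)$ as $v=K_\al * g$ with $g\in L_p(\R^n)$ and $\|v\|_{\LK}=\|g\|_{L_p}$, associativity of convolution---justified by Fubini, since $K_\al\in L_1(\R^n)$, $g\in L_p(\R^n)$, and $\mu$ is a finite measure---gives
\[
v*\mu \;=\; K_\al * (g*\mu).
\]
Classical Minkowski in $L_p$ yields $\|g*\mu\|_{L_p}\le |\mu|(\R^n)\,\|g\|_{L_p}$, hence $v*\mu\in \LK(\R^n)$ with
\[
\|v*\mu\|_{\LK} \;\le\; |\mu|(\R^n)\,\|v\|_{\LK}.
\]

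Next I would handle continuity on $L_{p,1}(\R^n)$. My preferred route is real interpolation. The crude endpoint bounds $\|f*\mu\|_{L_q}\le |\mu|(\R^n)\,\|f\|_{L_q}$ for $q\in\{1,\infty\}$ follow immediately from Minkowski applied to $f*\mu(x)=\int f(x-y)\,d\mu(y)$; the identification $(L_1,L_\infty)_{\theta,1}=L_{p,1}$ with $\theta=1-\tfrac{1}{p}$ then transports the estimate to $L_{p,1}(\R^n)$ for every $1<p<\infty$, with a constant of size $|\mu|(\R^n)$ up to a universal factor. Alternatively, because $L_{p,1}$ is translation invariant and, for $1<p<\infty$, a Banach space under an equivalent norm (e.g.\ the one built from $f^{**}$), one could run Minkowski directly on the Bochner integral $y\mapsto f(\cdot-y)\in L_{p,1}$ against $\mu$.

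The only point requiring care is the Lorentz bound, because the defining quasi-norm for $L_{p,1}$ used in the paper is not obviously subadditive under integrals against a measure; this forces one either to pass to the equivalent Banach norm or to route the argument through interpolation. I regard this as the main, but essentially cosmetic, obstacle: once the Lorentz estimate is in hand, the rest is a direct application of Fubini and the $L_p$-Minkowski inequality, exactly as in the Bessel potential case.
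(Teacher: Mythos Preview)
Your argument is correct. The paper does not actually prove this lemma; it simply attributes it to Schep's paper on Minkowski's integral inequality for general function norms, which is precisely your ``alternative'' route (pass to an equivalent Banach norm on $L_{p,1}$ and integrate the $L_{p,1}$-valued translation $y\mapsto f(\cdot-y)$ against $\mu$). Your primary route via real interpolation between $L_1$ and $L_\infty$ is a legitimate and equally short substitute that avoids invoking the abstract Minkowski inequality; either way the Bessel-potential half reduces, as you say, to the $L_p$ case by associativity of convolution.
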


Using these facts and repeating almost "word by word" the arguments from \cite[\S3.3 and 3.4]{St}, one could obtain the following very natural results.

\begin{ttt}[cf. with Lemma~3 on page 136 in~\cite{St}]\label{propert-pot-lor1}{\sl
Let $\alpha\ge1$ and $1<p<\infty$. Then $f\in
\LLL(\R^n)$ iff $f\in\Le^{\al-1}_{p,1}(\R^n)$ \ and \ $\frac{\partial f}{\partial x_j}\in \Le^{\al-1}_{p,1}(\R^n)$ for every $j=1,\dots,n$. }
\end{ttt}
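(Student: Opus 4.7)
The plan is to adapt, essentially verbatim, Stein's proof of his Lemma~3 on page~136 of~\cite{St} (characterizing $\Le^\al_p$ via first derivatives), substituting $L_{p,1}$ for $L_p$ at each step. Stein's argument is built entirely from three types of operators: Bessel potentials $\GG_\beta$, Riesz transforms $\mathscr{R}_j$, and convolutions with integrable kernels (equivalently, with finite Borel measures). The latter two are bounded on $L_{p,1}$ by Lemmas~\ref{lorl-1} and~\ref{lorl-11} respectively; the first preserves the potential scale by definition. Hence every link in Stein's chain of operator identities survives the replacement.

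For the \emph{necessity} direction, write $f = \GG_\al g$ with $g \in L_{p,1}$. The semigroup identity $K_\al = K_{\al-1}*K_1$ (an immediate consequence of $\widehat{K_\al}\widehat{K_\beta}=\widehat{K_{\al+\beta}}$) gives $f = \GG_{\al-1}(K_1*g)$, and since $K_1 \in L^1(\R^n)$ defines a finite Borel measure, Lemma~\ref{lorl-11} yields $K_1*g\in L_{p,1}$, i.e., $f \in \Le^{\al-1}_{p,1}$. For the derivatives, factor the multiplier
$$\widehat{\partial_j K_\al}(\xi) \;=\; \frac{-i\xi_j}{|\xi|}\cdot\frac{-2\pi|\xi|}{\sqrt{1+4\pi^2|\xi|^2}}\cdot(1+4\pi^2|\xi|^2)^{-(\al-1)/2},$$
where the first factor is the symbol of $\mathscr{R}_j$ and the middle factor differs from a constant by the Fourier transform of an $L^1$ radial function (cf.\ Stein's \S V.3.3). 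Hence $\partial_j f = \GG_{\al-1}(h)$ with $h = \mathscr{R}_j(g - \nu * g)$ for an explicit finite measure $\nu$; Lemmas~\ref{lorl-1} and~\ref{lorl-11} then place $h$ in $L_{p,1}$.

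For the \emph{sufficiency} direction, suppose $f, \partial_j f \in \Le^{\al-1}_{p,1}$ for all $j$. The key algebraic identity, via rationalizing $\sqrt{1+r^2} = 1 + r^2/(1+\sqrt{1+r^2})$ with $r=2\pi|\xi|$, reads
$$(1+4\pi^2|\xi|^2)^{1/2}\hat f(\xi) \;=\; \hat f(\xi) + \sum_{j=1}^n \frac{2\pi i \xi_j}{1+\sqrt{1+4\pi^2|\xi|^2}}\cdot 2\pi i \xi_j \hat f(\xi),$$
in which each fractional multiplier factors as a Riesz symbol times the Fourier transform of an $L^1$ function. Consequently $\GG_\al^{-1} f$ is expressible as a finite linear combination of Riesz transforms and finite-measure convolutions applied to $\GG_{\al-1}^{-1} f$ and $\GG_{\al-1}^{-1}(\partial_j f)$, each of which lies in $L_{p,1}$ by hypothesis. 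A final application of Lemmas~\ref{lorl-1}--\ref{lorl-11} gives $\GG_\al^{-1} f \in L_{p,1}$, i.e., $f \in \Le^\al_{p,1}$.

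The main obstacle I foresee is not conceptual but bookkeeping: one must verify that the auxiliary multipliers $2\pi|\xi|/\sqrt{1+4\pi^2|\xi|^2}$ and $2\pi i \xi_j/(1+\sqrt{1+4\pi^2|\xi|^2})$ really decompose, respectively, as a constant plus the Fourier transform of a finite Borel measure and as a Riesz-transform symbol times such a Fourier transform. These facts are implicit in Stein's computations in \S V.3.3 of~\cite{St}; once accepted, the $L_{p,1}$-boundedness of every operator appearing in the proof reduces mechanically to Lemmas~\ref{lorl-1} and~\ref{lorl-11}.
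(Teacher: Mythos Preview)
Your proposal is correct and follows exactly the approach the paper indicates: the paper gives no detailed proof but simply states that one should repeat Stein's argument from \S3.3--3.4 of~\cite{St} ``almost word by word'', invoking Lemmas~\ref{lorl-1} and~\ref{lorl-11} to transfer the $L_p$-boundedness of the Riesz transforms and of convolution with finite measures over to $L_{p,1}$. Your write-up is in fact more explicit than the paper's own treatment (modulo a harmless sign in the displayed identity for the sufficiency direction), which merely points to these two lemmas and to Stein.
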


\begin{cor}\label{propert-pot-lor2}{\sl
Let $k\in\N$ and $1<p<\infty$. Then $\Le^k_{p,1}(\R^n)=W^k_{p,1}(\R^n)$, where 
$W^k_{p,1}(\R^n)$ is the space of functions such that all its distributional partial derivatives of order~$\le k$ belong to~$L_{p,1}(\R^n)$.  }
\end{cor}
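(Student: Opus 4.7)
The plan is to proceed by induction on $k$, with Theorem~\ref{propert-pot-lor1} serving as the single analytic input; no new estimates are needed, only bookkeeping of the characterization.

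\emph{Base case $k=1$.} Apply Theorem~\ref{propert-pot-lor1} with $\al=1$. It gives that $f\in\Le^1_{p,1}(\R^n)$ iff $f\in\Le^0_{p,1}(\R^n)=L_{p,1}(\R^n)$ and $\frac{\partial f}{\partial x_j}\in\Le^0_{p,1}(\R^n)=L_{p,1}(\R^n)$ for every $j=1,\dots,n$ (using the natural convention $\Le^0_{p,1}=L_{p,1}$, parallel to the convention $\LK=L_p$ at $\al=0$ already adopted after (\ref{fN1----})). This is exactly the requirement that all distributional partial derivatives of $f$ of order $\le 1$ belong to $L_{p,1}(\R^n)$, i.e.~$f\in W^1_{p,1}(\R^n)$.

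\emph{Inductive step.} Assume that $\Le^{k-1}_{p,1}(\R^n)=W^{k-1}_{p,1}(\R^n)$ has already been verified. Applying Theorem~\ref{propert-pot-lor1} with $\al=k\ge1$, we get
\[
f\in\Le^{k}_{p,1}(\R^n)\ \Longleftrightarrow\ f\in\Le^{k-1}_{p,1}(\R^n)\ \text{ and }\ \tfrac{\partial f}{\partial x_j}\in\Le^{k-1}_{p,1}(\R^n)\ \forall j.
\]
By the induction hypothesis, the first membership is equivalent to $f$ together with all its distributional partial derivatives of order $\le k-1$ lying in $L_{p,1}(\R^n)$, while the second (applied to each $\frac{\partial f}{\partial x_j}$) is equivalent to all distributional partial derivatives of $f$ of orders $1,\dots,k$ lying in $L_{p,1}(\R^n)$. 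Conjoining these two conditions gives precisely that every distributional partial derivative of $f$ of order $\le k$ is in $L_{p,1}(\R^n)$, i.e.~$f\in W^k_{p,1}(\R^n)$.

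The only point requiring minor care is the identification of derivatives: the distributional partial derivative of the Bessel-potential representation $f=\GG_\al(g)$ coincides with $\GG_{\al-1}$ of the corresponding symbol multiplied by the appropriate Riesz-transform factor, and this identification is exactly what Theorem~\ref{propert-pot-lor1} (via the ingredients Lemmas~\ref{lorl-1}--\ref{lorl-11}) encodes. Thus there is no real obstacle; the corollary is simply an iterate of the $\al\mapsto\al-1$ reduction.
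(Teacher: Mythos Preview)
Your argument is correct and is precisely the route the paper intends: the corollary is presented immediately after Theorem~\ref{propert-pot-lor1} with no separate proof, since it is just the obvious $k$-fold iterate of that theorem (with the natural convention $\Le^{0}_{p,1}=L_{p,1}$ paralleling the one adopted after~(\ref{fN1----})). There is nothing to add.
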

Note, that the space $W^k_{p,1}(\R^n)$ admits even more simple (but equivalent) description: it consists of functions $f$ from the usual Sobolev space~$W^k_p(\R^n)$  satisfying the additional condition 
$\nabla^kf\in L_{p,1}(\R^n)$ (i.e., this condition is on the highest derivatives only), see, e.g.,~\cite{Maly2}. 

As before, we need some standard estimates. 

\begin{lem}\label{est-crit-lor}{\sl
Let $\alpha>0$, \,$\alpha p\ge n$, \,and \,$p>1$. Suppose that $v\in
\LLL(\R^n)$, i.e., \ $v=\GG_\al(g)$ for some $g\in L_{p,1}(\R^n)$.  
Then the
function~$v$ is continuous and for every $n$-dimensional cubic
interval $Q\subset \R^n$ with $\ell(Q)\le 1$ the estimate
\begin{equation}
\label{est-pot-4} \diam v(Q)\le
C\,\biggl[\|Mg\|_{L_{p,1}(Q)}r^{\alpha-\frac{n}p}+\frac1{r^{n+\theta-1}}\int\limits_QM_{\al-1+\theta}g(y)\,dy\,\biggr]
\end{equation}
holds for arbitrary (fixed) parameter $\theta>0$ such that 
$\al+\theta\ge1$ (here  the constant~$C$
again depends on $n,p,d,\alpha,\theta$ only).  Furthermore, if $\al>1$, then 
\begin{equation}
\label{est-pot-3} \diam v(Q)\le
C\,\biggl[\|Mg\|_{L_{p,1}(Q)}r^{\alpha-\frac{n}p}+\frac1{r^{n-1}}\int\limits_QI_{\al-1}|g|(y)\,dy\,\biggr].
\end{equation}
}
\end{lem}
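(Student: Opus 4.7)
The plan is to parallel the proofs of Lemmas~\ref{est-potent1} and~\ref{est-potent2} from the Appendix, upgrading every $L_p$-based step to its $L_{p,1}$-counterpart wherever the critical relation $\al p\ge n$ pushes the Lebesgue-based argument to the borderline. The Lorentz-specific ingredients I would use throughout are: the generalised H\"older inequality $\|fh\|_{L_1}\le C\|f\|_{L_{p,1}}\|h\|_{L_{p',\infty}}$, the maximal function bound~(\ref{max-1}), and the pointwise control $|K_\al(z)|\le C\,|z|^{\al-n}$ near the origin, which places the truncation $K_\al\cdot 1_{B(0,cr)}$ in $L_{p',\infty}$ with norm at most $C\,r^{\al-n/p}$ (where $1/p+1/p'=1$).

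For continuity of $v$: when $\al p>n$ it is inherited from $\LLL\subset\LK\subset C(\R^n)$. In the critical case $\al p=n$, I would combine Corollary~\ref{propert-pot-lor2} (when $\al\in\N$) or Theorem~\ref{propert-pot-lor1} with the classical Sobolev--Lorentz embedding $W^k_{p,1}(\R^n)\hookrightarrow C(\R^n)$ at the critical exponent $kp=n$ (cf.~Remark~\ref{LPT2-rem}). A self-contained alternative: since $K_\al$ belongs to $L_{p',\infty}$ on every bounded set and decays rapidly at infinity, Lorentz--H\"older with $g\in L_{p,1}$ gives local uniform convergence of $v=K_\al*g$ under convolutional approximation of $g$ by continuous functions.

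For the main estimates, fix $x_1,x_2\in Q$ with $r=\ell(Q)\le 1$ and split $g=g_1+g_2$ with $g_1=g\cdot 1_{B(x_1,4r)}$. For the local piece $v_1:=K_\al*g_1$, the Lorentz--H\"older inequality gives, for each $x\in Q$,
$$|v_1(x)|\le\int_{B(x,5r)}|K_\al(x-z)|\,|g(z)|\,dz\le C\,\bigl\|\,|K_\al|\cdot 1_{B(0,5r)}\,\bigr\|_{L_{p',\infty}}\|g\|_{L_{p,1}(\widetilde Q)}\le C\,r^{\al-\frac{n}{p}}\|Mg\|_{L_{p,1}(\widetilde Q)},$$
where $\widetilde Q$ is a fixed enlargement of $Q$ and the last step uses $|g|\le Mg$ almost everywhere. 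The far piece $v_2:=K_\al*g_2$ is smooth on $Q$ and handled exactly as in the $L_p$ lemmas: differentiating the Bessel kernel and integrating along the segment $[x_1,x_2]$ produces either the Riesz-potential term $r^{1-n}\int_Q I_{\al-1}|g|\,dy$ when $\al>1$ (yielding~(\ref{est-pot-3})), or the fractional-maximal term $r^{1-n-\theta}\int_Q M_{\al-1+\theta}g\,dy$ with the admissibility condition $\al+\theta\ge1$ (yielding~(\ref{est-pot-4})).

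The main obstacle is the local estimate precisely in the critical case $\al p=n$, where the gain $r^{\al-n/p}$ degenerates to $r^0$ and the standard Lebesgue-based argument has no slack. This is where the $L_{p,1}$ hypothesis on $g$ is decisive: the truncated Bessel kernel belongs to $L_{p',\infty}$ but \emph{not} to the strong space $L_{p'}$ at the critical scale, so the finer pairing $L_{p,1}\times L_{p',\infty}\subset L_1$ just barely closes the estimate, whereas the ordinary H\"older pairing $L_p\times L_{p'}$ would require the kernel to sit in $L_{p'}$, which it does not at this scale. Beyond this critical obstacle, the proof is routine bookkeeping parallel to Lemmas~\ref{est-potent1}--\ref{est-potent2}.
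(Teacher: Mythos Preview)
Your approach is essentially the paper's: the same near/far splitting $g=g\cdot 1_{2Q}+g\cdot 1_{\R^n\setminus 2Q}$, the same treatment of the far piece via the mean-value estimate for the Bessel kernel (Lemmas~\ref{lem-a3}--\ref{lem-a4}), and the correct identification of the Lorentz--H\"older pairing $L_{p,1}\times L_{p',\infty}\subset L_1$ as the ingredient that rescues the local estimate at the critical scale $\al p=n$.

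There is, however, one small but non-cosmetic discrepancy in your local estimate. You apply Lorentz--H\"older directly to $\int|K_\al(x-\cdot)|\,|g|$ and obtain $\|g\|_{L_{p,1}(\widetilde Q)}\le\|Mg\|_{L_{p,1}(\widetilde Q)}$ on an \emph{enlargement} $\widetilde Q$ of $Q$. The lemma, as stated, requires $\|Mg\|_{L_{p,1}(Q)}$ on $Q$ itself, and this matters downstream: in the proof of Theorem~\ref{LPT2} one sums $\|Mg\|^p_{L_{p,1}(Q_i)}$ over a \emph{disjoint} regular family and invokes the Lorentz additivity $\sum_i\|f\|^p_{L_{p,1}(Q_i)}\le\|f\|^p_{L_{p,1}(\cup_iQ_i)}$, which would be spoiled by overlapping enlargements. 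The paper handles this via Lemma~\ref{lem-a1} and Corollary~\ref{lem-a2}: one first shows the pointwise transfer
\[
\int_{2Q}\frac{|g(y)|}{|x-y|^{n-\al}}\,dy\le C\int_{Q}\frac{Mg(y)}{|x-y|^{n-\al}}\,dy,
\]
which replaces $g$ on $2Q$ by $Mg$ on $Q$, and \emph{only then} applies the Lorentz--H\"older inequality to the right-hand side. In other words, the passage from $g$ to $Mg$ is not the trivial bound $|g|\le Mg$ you invoke, but a device to shrink the domain of integration from $2Q$ back to $Q$. Inserting this step into your argument closes the gap and yields the lemma exactly as stated.
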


For reader's convenience, we prove Lemma~\ref{est-crit-lor} in the
Appendix~\ref{appendix}.

\subsection{On Choquet type integrals}

Let $\Me^\beta$ be the space of all nonnegative Borel
measures~$\mu$ on~$\R^n$ such that
\begin{equation*}
\label{mu1} | \! | \! |\mu | \! | \!
|_{\beta}=\sup_{I\subset\R^n}\ell(I)^{- \beta}\mu(I)<\infty,
\end{equation*}
where the supremum is taken over all $n$--dimensional  cubic
intervals $I\subset\R^n$ and $\ell(I)$ denotes side--length
of~$I$.

Recall the following classical theorem referred to
D.R.~Adams.

\bigskip

\begin{ttt}[see, e.g., \S1.4.1 in~\cite{M} or \cite{Ad1} ]\label{AM1}{\sl
Let $\alpha>0$, \,$n-\alpha p>0$, \,$s>p>1$ \,and \,$\mu$ be a
positive Borel measure on $\mathbb R^n$. Then for any $g\in
L_p(\R^n)$ the estimate
\begin{equation}
\label{Ri1} \int \bigl| I_\al g\bigr|^s\,\dd\mu \leq C| \! | \! |\mu |
\! | \! |_\beta\cdot\Vert g\Vert^s_{L_p}
\end{equation}
holds with $\beta=\frac{s}p(n-\al p)$, where $C$ depends on $n,
\ p, \ s, \  \al$ only. }
\end{ttt}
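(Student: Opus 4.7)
The plan is to follow the classical Hedberg--Adams strategy: establish a pointwise bound relating $I_\alpha g$ to the maximal function, convert this into a weak-type trace inequality via a covering argument, then upgrade to strong type via Marcinkiewicz interpolation.

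The first step is Hedberg's pointwise inequality. Splitting $I_\alpha g(x)$ at radius $\delta>0$ into a near-field piece ($|y-x|<\delta$) and a far-field piece ($|y-x|\ge\delta$), a dyadic decomposition of the near-field gives the bound $C\delta^\alpha Mg(x)$, while H\"older's inequality on the far-field, combined with the assumption $\alpha p<n$, gives $C\delta^{\alpha-n/p}\|g\|_{L_p}$. Optimising in $\delta$ produces
$$|I_\alpha g(x)|\le C(Mg(x))^{1-\alpha p/n}\|g\|_{L_p}^{\alpha p/n}.$$

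The second step is a weak-type trace inequality. Apply a Vitali-type covering to $\{Mg>u\}$ to obtain pairwise disjoint balls $B_j=B(x_j,r_j)$ with $u|B_j|\le\int_{B_j}|g|$ and $\{Mg>u\}\subset\bigcup_j 5B_j$. H\"older's inequality on each $B_j$ yields $r_j^{n/p}\le C\|g\|_{L_p(B_j)}/u$, hence $r_j^\beta\le Cu^{-\tilde s}\|g\|_{L_p(B_j)}^{\tilde s}$ with $\tilde s:=\beta p/n=s(n-\alpha p)/n$. Summing $\mu(5B_j)\le C| \! | \! |\mu | \! | \! |_\beta\,r_j^\beta$ and invoking disjointness together with the super-additivity $\sum_j a_j^r\le(\sum_j a_j)^r$ (applied with $r=\tilde s/p\ge 1$) gives
$$\mu(\{Mg>u\})\le C| \! | \! |\mu | \! | \! |_\beta\,u^{-\tilde s}\|g\|_{L_p}^{\tilde s},$$
and Hedberg's bound then embeds $\{I_\alpha g>t\}$ into $\{Mg>c(t\|g\|_{L_p}^{-\alpha p/n})^{n/(n-\alpha p)}\}$, producing the weak-type $(p,s)$ trace estimate
$$\mu(\{I_\alpha g>t\})\le C| \! | \! |\mu | \! | \! |_\beta\,t^{-s}\|g\|_{L_p}^s.$$
The third step is Marcinkiewicz interpolation: execute the above argument at two endpoints $(p_i,s_i)$, $i=0,1$, with $p_0<p<p_1$ and $\alpha p_i<n$, both lying on the scaling curve $\beta=s(n-\alpha p)/p$, and interpolate between the two weak-type bounds to obtain the strong-type estimate \eqref{Ri1} with the linear dependence on $| \! | \! |\mu | \! | \! |_\beta$.

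The main obstacle is the super-additivity step in Step~2: it requires $\tilde s\ge p$, equivalent to $s$ being at or above the Sobolev exponent $np/(n-\alpha p)$ (i.e.\ $\beta\ge n$). In the subcritical range $p<s<np/(n-\alpha p)$ (where $\beta<n$), the direct $\ell^p\hookrightarrow\ell^{\tilde s}$ summation fails, and the argument must be replaced by a Choquet-integral analysis: one exploits the comparison $\mu(E)\le C| \! | \! |\mu | \! | \! |_\beta\,\H^\beta_\infty(E)$ to reduce matters to an Adams--Muckenhoupt--Wheeden bound for Riesz potentials against the Hausdorff content $\H^\beta_\infty$. Since the statement is a classical result (as the citations to Maz'ya and Adams indicate), I would invoke the available capacitary machinery at this point rather than re-derive it from first principles.
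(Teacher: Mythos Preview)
The paper does not prove this statement; it is quoted as a classical result with references to Maz'ya and Adams, so there is no ``paper's proof'' to compare against. Your outline is the standard Hedberg--Adams route and is correct in the range $\beta\ge n$ (equivalently $s\ge np/(n-\alpha p)$): Hedberg's pointwise bound is fine, the covering argument yields the weak-$(p,s)$ trace estimate when $\tilde s=\beta p/n\ge p$, and Marcinkiewicz closes the loop.

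The difficulty you flag is real, but note that the interpolation in Step~3 does \emph{not} rescue the subcritical range. Along the scaling curve $\beta=s(n-\alpha p)/p$ the ratio $\tilde s/p=\beta/n$ is independent of the endpoint $p_i$, so if $\beta<n$ the super-additivity step fails at \emph{every} endpoint simultaneously; there is nothing to interpolate between. This matters here: in the paper's sole application (Lemma~\ref{Thh3.3}) one has $\beta=\tau\in(\tau_*,n]$, so the relevant regime is precisely $\beta\le n$, and for $\tau<n$ strictly subcritical.

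Your fallback---pass to the Hausdorff content via $\mu(E)\le C| \! | \! |\mu| \! | \! |_\beta\,\H^\beta_\infty(E)$ and invoke the Adams bound $\int(I_\alpha g)^s\,d\H^\beta_\infty\le C\|g\|_{L_p}^s$---is a legitimate route, but it is a result of the same depth as the one being proved, so at that point you are citing rather than proving. A~self-contained argument in the subcritical range typically proceeds either via a good-$\lambda$ inequality comparing $I_\alpha g$ with $M_\alpha g$ (Muckenhoupt--Wheeden), or via the dual formulation (Wolff potentials), or by establishing the strong-type maximal trace $\int(Mg)^{\tilde s}\,d\mu\le C| \! | \! |\mu| \! | \! |_\beta\|g\|_{L_p}^{\tilde s}$ directly for $\tilde s<p$ using a Carleson-embedding/stopping-time decomposition rather than a bare Vitali cover. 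Any of these would complete your sketch; as written, the subcritical case is a genuine gap.
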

\bigskip

The above estimate (\ref{Ri1}) fails for the limiting case $s=p$.
Namely, there exist functions $g\in L_p(\R^n)$ such that
$| I_\al g|(x)=+\infty$ on some set of positive
$(n-\al p)$--Hausdorff measure. Nevertheless, 
there are two ways to cover this limiting case~$s=p$. First way is using the maximal function $M_\al$ instead of Riesz potential in the left hand side of~(\ref{Ri1}). 

\begin{ttt}[see, e.g., Theorem~7 on page 28 in~\cite{Ad3} ]\label{AM2}{\sl
Let $\alpha>0$, \,$n-\alpha p>0$, \,$s\ge p>1$ \,and \,$\mu$ be a
positive Borel measure on $\mathbb R^n$. Then for any $g\in
L_p(\R^n)$ the estimate
\begin{equation}
\label{Ri2} \int \bigl| M_\al g\bigr|^s\,\dd\mu \leq C| \! | \! |\mu |
\! | \! |_\beta\cdot\Vert g\Vert^s_{L_p}
\end{equation}
holds with $\beta=\frac{s}p(n-\al p)$, where $C$ depends on $n,
\ p, \ s, \  \al$ only. }
\end{ttt}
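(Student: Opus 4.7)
My plan is to split the proof into two regimes: the sub‑critical case $s>p$, which reduces to Theorem~\ref{AM1} via a pointwise bound, and the endpoint $s=p$, which requires a direct covering argument.

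For $s>p$, the first step is to observe the pointwise domination $M_\alpha g(x)\le C(n,\alpha)\,I_\alpha|g|(x)$. This is immediate from the definitions: for any ball $B(x,r)$,
$$r^\alpha\,\dashint_{B(x,r)}|g(y)|\,dy=\frac{C}{r^{n-\alpha}}\int_{B(x,r)}|g(y)|\,dy\le C\int_{B(x,r)}\frac{|g(y)|}{|x-y|^{n-\alpha}}\,dy\le C\,I_\alpha|g|(x),$$
so taking the supremum over $r$ yields the claim. Substituting this into~(\ref{Ri2}) and invoking Theorem~\ref{AM1} with the same $\beta=\tfrac{s}{p}(n-\alpha p)$ settles the range $s>p$.

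For the endpoint $s=p$ (so $\beta=n-\alpha p$), the strategy is to prove a sharp distributional estimate for $M_\alpha$ against~$\mu$ and then integrate. Fix $\lambda>0$ and set $E_\lambda=\{M_\alpha g>\lambda\}$. For each $x\in E_\lambda$ choose a ball $B(x,r_x)$ with $r_x^\alpha\,\dashint_{B(x,r_x)}|g|>\lambda/2$; by the Vitali covering lemma extract a disjoint subfamily $\{B_j=B(x_j,r_j)\}$ with $E_\lambda\subset\bigcup_j 5B_j$. Hölder's inequality applied to $\int_{B_j}|g|\gtrsim \lambda r_j^{n-\alpha}$ converts the defining inequality into the \emph{geometric} bound $r_j^{\,n-\alpha p}\le C\lambda^{-p}\int_{B_j}|g|^p$, which is exactly $r_j^{\beta}\le C\lambda^{-p}\int_{B_j}|g|^p$ in the endpoint. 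Summing, using $\mu(5B_j)\le C|||\mu|||_\beta r_j^\beta$ and the disjointness of the $B_j$, one obtains
$$\mu(E_\lambda)\le C|||\mu|||_\beta\,\lambda^{-p}\!\int_{\bigcup_j B_j}\!|g|^p\,dy.$$
The key refinement is that the right-hand side is \emph{localized} to a set controlled by $E_\lambda$ (each $B_j$ lies near a level-set point of $M_\alpha g$), rather than the global norm $\|g\|_p^p$.

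The principal obstacle is the final step: upgrading this refined weak-type $(p,p)$ estimate to the strong-type $L^p(\mu)$ bound. Naive interpolation with a second endpoint is not available, and the brute-force integration $\int_0^\infty p\lambda^{p-1}\mu(E_\lambda)\,d\lambda$ leads to a logarithmic divergence. I would handle this by replacing the Vitali cover by a dyadic Whitney decomposition to make the selected balls nested and pairwise disjoint at each level, then running a geometric-series argument over dyadic values $\lambda=2^k$: at each level only the portion of $|g|$ exceeding~$c\lambda$ effectively contributes to the covering, so after summing over $k$ the localized $\int_{E_\lambda}|g|^p$ terms telescope into the single bound $C|||\mu|||_\beta\|g\|_p^p$. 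Equivalently (and this is the cleanest route) one can pass through the equivalence between the supremum over $\mu\in\Me^\beta$ with $|||\mu|||_\beta\le1$ and the Choquet integral with respect to Hausdorff content~$\H^\beta_\infty$, apply the subadditivity of $\H^\beta_\infty$, and conclude by the corresponding capacitary strong-type inequality (cf.~Adams' original argument).
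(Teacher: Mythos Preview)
The paper does not give its own proof of this statement: Theorem~\ref{AM2} is quoted verbatim from Adams~\cite{Ad3} (Theorem~7, p.~28) as a known result in the preliminaries, with no argument supplied. So there is no ``paper's proof'' to compare against; your proposal is an attempt to reconstruct Adams' theorem from scratch.

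On the substance of your attempt: the reduction for $s>p$ via the pointwise bound $M_\alpha g\le C\,I_\alpha|g|$ and Theorem~\ref{AM1} is correct and complete. For the endpoint $s=p$, your Vitali argument yielding the localized weak-type estimate $\mu(E_\lambda)\le C|||\mu|||_\beta\,\lambda^{-p}\int_{\cup_j B_j}|g|^p$ is also correct. However, the passage from this to the strong-type bound is where the real content lies, and here your proposal is only a sketch: the ``telescoping over dyadic levels $\lambda=2^k$'' idea is not carried out (and making it rigorous requires care, e.g.\ a good-$\lambda$ inequality or a Carleson embedding argument for the dyadic maximal function), while your alternative ``cleanest route''---reducing to the capacitary strong-type inequality for $\H^\beta_\infty$---is precisely Adams' original argument in~\cite{Ad3}, so invoking it is circular if the aim was an independent proof. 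In short, the $s>p$ case is done; the $s=p$ case has the right skeleton but the decisive step is deferred rather than executed.
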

\bigskip

The second way is using the Lorentz norm instead of Riesz potential in the right hand side of~(\ref{Ri1}).  Such possibility was proved in the recent paper~\cite{KK15}.

\begin{ttt}[see Theorem~0.2 in~\cite{KK15}]\label{lb7}{\sl
Let $\alpha>0$, \,$n-\alpha p>0$, \,and \,$\mu$ be a
positive Borel measure on $\mathbb R^n$. Then for any $g\in
L_p(\R^n)$ the estimate
\begin{equation*}
\label{Ri5} \int \bigl| I_\al g\bigr|^p\,\dd\mu \leq C| \! | \! |\mu |
\! | \! |_\beta\cdot\Vert g\Vert^p_{L_{p,1}}
\end{equation*}
holds with $\beta=n-\al p$, where $C$ depends on $n,
\ p, \  \al$ only. }
\end{ttt}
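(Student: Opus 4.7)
The plan is to reduce the general estimate to the case of characteristic functions via a layer-cake decomposition, and then to establish the resulting characteristic-function estimate by an Adams--Hedberg-type dyadic argument exploiting the Frostman growth of~$\mu$ and the critical value $\beta=n-\alpha p$.

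First, writing $|g(x)|=\int_0^\infty\chi_{E_t}(x)\,dt$ with $E_t:=\{|g|>t\}$ and using that the Riesz kernel is nonnegative, one has $|I_\alpha g(x)|\le\int_0^\infty I_\alpha\chi_{E_t}(x)\,dt$. Minkowski's integral inequality in $L^p(d\mu)$ then yields
$$
\|I_\alpha g\|_{L^p(\mu)}\;\le\;\int_0^\infty\|I_\alpha\chi_{E_t}\|_{L^p(\mu)}\,dt.
$$
Combined with the identity $\|g\|_{L_{p,1}}=\int_0^\infty\mathcal L^n(E_t)^{1/p}\,dt$, the theorem reduces to proving the \emph{characteristic-function estimate}
$$
\int(I_\alpha\chi_E)^p\,d\mu\;\le\;C\, |\!|\!|\mu|\!|\!|_\beta\,\mathcal L^n(E)\qquad\text{for every measurable }E\subset\R^n.
$$

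To prove this characteristic-function estimate, I would combine the Hedberg-type pointwise bound $I_\alpha\chi_E(x)\le C\,(M\chi_E(x))^{1-\alpha/n}\mathcal L^n(E)^{\alpha/n}$ (obtained by splitting $I_\alpha\chi_E$ at the radius $s_\ast=(\mathcal L^n(E)/M\chi_E(x))^{1/n}$) with a distributional estimate for the Hardy--Littlewood maximal function against~$\mu$. The super-level sets of $M\chi_E$ are covered by a Calder\'on--Zygmund family of disjoint dyadic cubes $\{Q_j\}$ satisfying $|E\cap Q_j|\ge c\lambda|Q_j|$; the Frostman condition gives $\mu(Q_j)\le |\!|\!|\mu|\!|\!|_\beta\,\ell(Q_j)^\beta$, and the relation $\beta=n-\alpha p$ is used to make the resulting dyadic sum match the target exponent in~$\mathcal L^n(E)$ after multiplication by the Hedberg factor $\mathcal L^n(E)^{\alpha p/n}$.

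The principal difficulty is that $\beta=n-\alpha p$ is the critical exponent at which the classical Adams inequality (Theorem~\ref{AM1}) fails in strong-type form: one has only weak-type $I_\alpha:L_p\to L^{p,\infty}(\mu)$, and direct integration of the distribution function yields a logarithmic divergence incompatible with the $L_p$ norm. The replacement of $L_p$ by the Lorentz space $L_{p,1}$ is exactly what regularizes this divergence, since the layer-cake integral $\int_0^\infty\mathcal L^n(E_t)^{1/p}\,dt$ is finite precisely when $g\in L_{p,1}$. The most delicate technical step is then the control of the sum $\sum_j\ell(Q_j)^\beta$ appearing in the Calder\'on--Zygmund covering argument: a naive Jensen-type bound involves the (possibly infinite) number of cubes, so one must combine the disjointness of $\{Q_j\}$, the lower mass bound $|E\cap Q_j|\ge c\lambda\ell(Q_j)^n$, and the critical relation $\beta=n-\alpha p$ in a careful dyadic summation.
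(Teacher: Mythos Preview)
First, note that the paper does not itself prove this statement: Theorem~\ref{lb7} is quoted from~\cite{KK15} and used as a black box (in the proof of Theorem~\ref{LPT2}). So there is no in-paper argument to compare against, and your proposal has to stand on its own.

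Your opening reduction is correct and standard. Writing $|g|=\int_0^\infty\chi_{E_t}\,dt$ and applying Minkowski's integral inequality in $L^p(\mu)$ does reduce the theorem to the characteristic-function estimate
\[
\int(I_\alpha\chi_E)^p\,d\mu \;\le\; C\,|\!|\!|\mu|\!|\!|_\beta\,|E|,\qquad\beta=n-\alpha p,
\]
and this is indeed the heart of the matter.

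The genuine gap is in your proposed proof of that estimate. The Hedberg bound $I_\alpha\chi_E(x)\le C\,(M\chi_E(x))^{1-\alpha/n}\,|E|^{\alpha/n}$ is \emph{too lossy} at the critical exponent, because it replaces the local quantity $I_\alpha\chi_E(x)$ by one containing the \emph{global} factor $|E|^{\alpha/n}$. To see that no Calder\'on--Zygmund argument downstream can compensate, take $E=\bigcup_{j=1}^N B(x_j,r)$ with centres $x_j$ on a $\beta$-dimensional plane $P$, pairwise separated by a large distance, and let $\mu$ be $\H^\beta$ restricted to~$P$ (so $|\!|\!|\mu|\!|\!|_\beta\sim1$). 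Near each $x_j$ one has $M\chi_E\sim1$, so the Hedberg upper bound for $(I_\alpha\chi_E)^p$ there is $\gtrsim(Nr^n)^{\alpha p/n}=N^{\alpha p/n}r^{\alpha p}$; integrating over the $N$ caps $B(x_j,r)\cap P$ already gives
\[
\gtrsim N\cdot N^{\alpha p/n}r^{\alpha p}\cdot r^\beta \;=\; N^{1+\alpha p/n}\,r^n,
\]
which exceeds the target $|E|\sim Nr^n$ by the unbounded factor $N^{\alpha p/n}$. Equivalently, the intermediate inequality your scheme requires,
\[
\int (M\chi_E)^{p(1-\alpha/n)}\,d\mu\;\le\;C\,|\!|\!|\mu|\!|\!|_\beta\,|E|^{\beta/n},
\]
is simply \emph{false} in general. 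Hence the ``delicate dyadic summation'' you flag is not merely delicate: it cannot be carried out, because the loss has already occurred at the pointwise step, before any covering argument begins.

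A proof that works must retain the scale-by-scale structure of $I_\alpha\chi_E$ rather than collapsing it through the single global factor $|E|^{\alpha/n}$. This is what distinguishes the endpoint $s=p$ from the open range $s>p$ in Theorems~\ref{AM1}--\ref{AM2}, and it is the content of the argument in~\cite{KK15}.
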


\subsection{On Fubini type theorems  for $N$-properties}

Recall that by usual Fubini theorem, if a set $E\subset\R^2$ has 
zero plane measure, then for $\H^1$-almost all straight lines $L$ parallel to coordinate axes we have $\H^1(L\cap E)=0$. Next 
result could be considered as Fubini type theorem for $N$-property. 

\begin{ttt}[see Theorem~5.3 in~\cite{HKK}]\label{FubN}{\sl
Let $\mu\ge 0$, \,$q>0$, \,and \,$v:\R^n\to\R^d$ \,be a continuous function. For a set $E\subset\R^n$
define the set function
\begin{equation*}\label{dd5}
\Phi(E)=\inf\limits_{E\subset\bigcup_j
D_j}\sum\limits_j\bigl(\diam D_j\bigr)^\mu\bigl[\diam
v(D_j)\bigr]^q,
\end{equation*}
where the infimum is taken over all countable families of compact
sets $\{D_j\}_{j\in \N}$ such that $E\subset\bigcup_j D_j$. Then \ $\Phi(\cdot)$ is a countably
subadditive and the implication
\begin{equation*}\label{dd6} \Phi(E)=0\ \boldsymbol{\Rightarrow}\
\biggl[\H^\mu\bigl(E\cap v^{-1}(y)\bigr)=0\quad\mbox{for
$\H^q$-almost all }y\in\R^d\biggr]
\end{equation*}holds.}
\end{ttt}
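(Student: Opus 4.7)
\medskip

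\noindent\textbf{Proof proposal.} The countable subadditivity is immediate from the definition of~$\Phi$: given $E=\bigcup_{k}E_k$ and $\varepsilon>0$, choose for each~$k$ a family of compact sets $\{D_j^k\}_{j}$ that covers $E_k$ and satisfies $\sum_j(\diam D_j^k)^\mu(\diam v(D_j^k))^q<\Phi(E_k)+\varepsilon\,2^{-k}$; the double-indexed family $\{D_j^k\}_{j,k}$ then covers~$E$ and has total weight $<\sum_k\Phi(E_k)+\varepsilon$, whence $\Phi(E)\le\sum_k\Phi(E_k)$.

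The main implication will be obtained by proving a Choquet--Fubini inequality of the form
\begin{equation}\label{prop-key-ineq}
\int^{*}\H^\mu_\infty\bigl(E\cap v^{-1}(y)\bigr)\,\dd\H^q_\infty(y)\;\le\;\Phi(E),
\end{equation}
where the star denotes the Choquet (upper) integral against the outer measure~$\H^q_\infty$. To get~\eqref{prop-key-ineq}, fix any admissible covering $\{D_j\}$ of~$E$. For a given $y\in\R^d$, only those indices~$j$ with $y\in v(D_j)$ give a nonempty slice $D_j\cap v^{-1}(y)$, and these slices form a cover of~$E\cap v^{-1}(y)$ with $\diam\bigl(D_j\cap v^{-1}(y)\bigr)\le \diam D_j$. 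Hence pointwise in~$y$,
\begin{equation*}
\H^\mu_\infty\bigl(E\cap v^{-1}(y)\bigr)\;\le\;\sum_{j}(\diam D_j)^\mu\,\mathbf{1}_{v(D_j)}(y).
\end{equation*}
Since each $v(D_j)$ is compact (so Borel), $\H^q_\infty(v(D_j))\le(\diam v(D_j))^q$ by the very definition of Hausdorff content, and since the Choquet integral of a monotone countable sum of nonnegative Borel functions is bounded by the sum of the Choquet integrals (which in turn follows from monotone convergence together with finite subadditivity of~$\H^q_\infty$), integration against~$\H^q_\infty$ yields
\begin{equation*}
\int^{*}\H^\mu_\infty\bigl(E\cap v^{-1}(y)\bigr)\,\dd\H^q_\infty(y)\;\le\;\sum_j(\diam D_j)^\mu\,\H^q_\infty\bigl(v(D_j)\bigr)\;\le\;\sum_j(\diam D_j)^\mu(\diam v(D_j))^q.
\end{equation*}
Taking the infimum over admissible covers gives~\eqref{prop-key-ineq}.

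Assuming $\Phi(E)=0$, inequality~\eqref{prop-key-ineq} forces $\H^\mu_\infty(E\cap v^{-1}(y))=0$ for $\H^q_\infty$-almost all $y\in\R^d$ (e.g., by Chebyshev applied to the Choquet integral). Two standard reductions finish the argument: first, $\H^q_\infty$ and $\H^q$ have exactly the same null sets in~$\R^d$, so the exceptional set of $y$ is $\H^q$-negligible; second, for any bounded set $A\subset\R^n$ one has $\H^\mu_\infty(A)=0\Leftrightarrow\H^\mu(A)=0$ (since a cover of $\mu$-content weight $<\varepsilon$ automatically has diameters $<\varepsilon^{1/\mu}$), and a straightforward localization handles the unbounded case by exhausting $E$ by $E\cap B(0,R)$. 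The main technical obstacle in this scheme is the legitimacy of the Choquet manipulations -- namely, that one may pull a countable sum of indicator functions out of the upper integral with respect to~$\H^q_\infty$ -- but this is a standard feature of the Hausdorff content outer measure and requires no measurability of $y\mapsto\H^\mu_\infty(E\cap v^{-1}(y))$ beyond what is automatic from the Choquet framework.
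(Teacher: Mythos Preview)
The paper does not supply its own proof of this statement: it is quoted verbatim as Theorem~5.3 of~\cite{HKK} and used as a black box, so there is nothing in the present paper to compare your argument against.

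That said, your route is essentially the natural one and is correct. Two small comments. First, your closing ``localization'' to bounded sets is unnecessary: the implication $\H^\mu_\infty(A)=0\Rightarrow\H^\mu(A)=0$ holds for \emph{any} set~$A$ when $\mu>0$, since a cover of total $\mu$-weight $<\varepsilon$ already has all diameters $<\varepsilon^{1/\mu}$; and for $\mu=0$ both conditions mean $A=\emptyset$. Second, you are right that the only genuine issue is pulling the countable sum out of the Choquet integral against $\H^q_\infty$; this subadditivity is indeed known (via the equivalent dyadic Hausdorff content, which is strongly subadditive, cf.\ Adams~\cite{Ad3}), but calling it ``a standard feature \dots\ requiring no measurability beyond what is automatic'' undersells the point---it is precisely the nontrivial ingredient here and deserves an explicit reference rather than a wave of the hand.
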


\subsection{On local properties of considered potential spaces}\label{loc-s1}
Let $\Be$ be some space of functions defined on~$\R^n$. For a set $\Omega\subset\R^n$ define the space $\Be_\loc(\Omega)$ in the following standard way:
$$\Be_\loc(\Omega):=\{f:\Omega\to\R:\mbox{ for any compact set }E\subset\Omega\mbox{ $\exists g\in\Be$ such that }f(x)=g(x)\ \forall x\in E\,\}.$$
Put for simplicity $\Be_\loc=\Be_\loc(\R^n)$.

Is is easy to see that for $\al>0$ and $q>p>1$ the following inclusions hold:
\begin{equation*}\label{loc1} \Le^\al_{q,\loc}\subset \Le^\al_{p,1,\loc}\subset \Le^\al_{p,\loc}.
\end{equation*}
Since the $N$-properties have a local nature, this means that if we prove some $N$- (or $N_*$\,) properties for $\Le^\al_{p}$, then the same $N$-property will be valid for 
the spaces $\Le^\al_{p,1}$ and $\Le^\al_{q}$ for all $q>p$. Similarly, if we prove some $N$- (or $N_*$\,) properties for $\Le^\al_{p,1}$, then the same $N$-property will be valid for 
the spaces $\Le^\al_{q}$ with $q>p$, etc.

\section{Proofs of $N$-properties (\,Theorems~\ref{LPT1}--~\ref{LPT2}\,) }

\bigskip
In this Section we will prove Theorems \ref{LPT1}--\ref{LPT2}. For each Theorem, we will consider  different cases. The most interesting case is when $\alpha \,p <n+p$, which implies that $\tau_*>0$: in such situation we will consider the supercritical case $\tau>\tau_*>0$ and the undercritical case $0<\tau<\tau_*$ (see, respectively, Sections \ref{t1-super-a} and \ref{t1-under} below). The case $\alpha \,p \ge n+p$ is contained in Section \ref{t1-super-b}.
\medskip

\noindent In the proofs we will consider particular family of intervals to cover a given set, whose properties are more  suitable for our aims.  
Below {\bf a dyadic interval} means  a closed cube
in $\R^n$ of the form $[\frac{k_1}{2^l},\frac{k_1+1}{2^l}]\times\dots\times[\frac{k_n}{2^l},\frac{k_n+1}{2^l}]$,
where $k_i,l$ are integers. Denote
$$
\Lambda^s(E)=\inf\left\{ \sum_{i=1}^\infty \ell(Q_i)^s\ :\  E\subset\bigcup\limits_{i=1}^\infty Q_i \ \ Q_i \ \ \mbox{dyadic } \right\}.
$$
It is well known that $\Lambda^s (E)\sim \H^s(E)$  for all subset $E\subset\R^n$;  \ in particular,  $\Lambda^s $ and $\H^s$ have the same null sets. 

\noindent Let  $\{ Q_j \}_{j \in \N}$ be a family of $n$-dimensional dyadic
intervals. For a~given parameter~$\tau>0$ we say that the family $\{ Q_j \}$ is {\bf regular}, if $\sum\ell(Q_j)^{\tau}<\infty$ and 
for any $n$-dimensional dyadic interval $Q$ the estimate
\begin{equation}\label{q8}
\ell(Q)^{{\tau}}\ge\sum\limits_{j : Q_j\subset
Q}\ell(Q_j)^{{{\tau}}}
\end{equation}
holds. Since dyadic intervals are either nonoverlapping or
contained in one another, (\ref{q8}) implies that any regular
family $\{ Q_j \}$ must in particular consist of nonoverlapping
intervals. Moreover, the following result holds.

\begin{lem}[see Lemma~2.3 in \cite{BKK2}]\label{lb5.1}{\sl
Let $\{J_i \}$ be a family of $n$--dimensional dyadic intervals with $\sum\limits_{i}\ell(
J_i)^{{{\tau}}}<\infty$.
Then there exists
 a regular family $\{Q_j \}$ of $n$--dimensional
dyadic intervals such that $\bigcup_i J_i\subset \bigcup_j Q_j$
and
$$
\sum\limits_{j}\ell(Q_j)^{{{\tau}}}\le\sum\limits_{i}\ell(
J_i)^{{{\tau}}}.
$$
}\end{lem}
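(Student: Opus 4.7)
The plan is to consolidate the family $\{J_i\}$ by ``merging'' groups of cubes whenever the $\tau$-mass packed into some dyadic cube exceeds what regularity allows. For each dyadic cube $Q$, I set
$$
S(Q):=\sum_{i:\,J_i\subseteq Q}\ell(J_i)^\tau,
$$
and call $Q$ \emph{bad} if $\ell(Q)^\tau<S(Q)$, \emph{good} otherwise. Since $S(Q)\le\sum_i\ell(J_i)^\tau<\infty$ by hypothesis, bad cubes have side-length bounded by $\bigl(\sum_i\ell(J_i)^\tau\bigr)^{1/\tau}$, so every bad cube is contained in a unique \emph{maximal} bad dyadic ancestor; let $\{B_k\}$ denote the collection of all such maximal bad cubes. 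By maximality they are pairwise non-overlapping.

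I would then propose the family
$$
\{Q_j\}_j:=\{B_k\}_k\ \cup\ \bigl\{J_i:\,J_i\not\subseteq B_k\mbox{ for any }k\bigr\}.
$$
The covering inclusion $\bigcup_i J_i\subseteq\bigcup_j Q_j$ is immediate. The sum bound follows from $\ell(B_k)^\tau<S(B_k)$ (each $B_k$ is bad) together with the pairwise disjointness of the $\{B_k\}$, which ensures each $J_i$ lies in at most one $B_k$:
$$
\sum_j\ell(Q_j)^\tau=\sum_k\ell(B_k)^\tau+\sum_{J_i\not\subseteq\bigcup_k B_k}\ell(J_i)^\tau\le\sum_k S(B_k)+\sum_{J_i\not\subseteq\bigcup_k B_k}\ell(J_i)^\tau=\sum_i\ell(J_i)^\tau.
$$

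The bulk of the work is verifying the regularity inequality $\ell(Q)^\tau\ge\sum_{Q_j\subseteq Q}\ell(Q_j)^\tau$ for every dyadic cube $Q$, via case analysis. If $Q\subsetneq B_{k'}$ for some maximal bad $B_{k'}$, then maximality of the $B_k$'s forbids any $B_k\subseteq Q$, and any leftover $J_i\subseteq Q\subset B_{k'}$ would already be contained in $B_{k'}$, a contradiction; thus the right-hand side is zero. If $Q=B_k$, the same reasoning leaves only $B_k$ itself contributing, giving equality. The main case is when $Q$ is good and not contained in any $B_{k'}$; here the dyadic trichotomy (two dyadic cubes are either nested or disjoint) ensures that every $J_i\subseteq Q$ either sits in some $B_k\subseteq Q$ or is a leftover in $Q$, so the sum is bounded by $S(Q)\le\ell(Q)^\tau$, again using $\ell(B_k)^\tau<S(B_k)$ and the disjointness of the $B_k$'s. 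The main obstacle is precisely this bookkeeping in the good case---ruling out $J_i\subseteq Q$ ``escaping'' into a $B_{k'}$ with $Q\subsetneq B_{k'}$---which is excluded exactly because $Q$ is assumed not to sit inside any maximal bad cube.
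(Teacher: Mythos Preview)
The paper does not supply its own proof of this lemma; it is quoted from~\cite{BKK2}. Your stopping-time construction---select the maximal ``bad'' dyadic cubes where the accumulated $\tau$-mass of the $J_i$'s strictly exceeds $\ell(Q)^\tau$, keep those together with the $J_i$ not absorbed by any of them, then verify the regularity inequality by the three-case split you describe---is correct and is exactly the standard argument one finds in that reference.
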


\subsection{Proof of the Theorem~\ref{LPT1}: the supercritical case~$\tau>\tau_*>0$.}\label{t1-super-a}

Fix the parameters $n\in\N$, $\alpha>0$, $p>1$ such that 
\begin{equation}\label{n-dd7}
\al p>n,\ \qquad\tt=n-(\al-1)p>0,\end{equation}
and take \begin{equation}\label{assc1}
\t\in(\tt,n].\end{equation}
Fix also a mapping $v\in \LK(\R^n,\R^d)$.  If $\al=1$, then $v\in W^1_p(\R^n)$ with $p>n$ and $\tau=n$, and the result is well-known. 
So we restrict our attention to the nontrivial case~$\alpha>1$, $\tau<n$.

Now let $\{ Q_i \}_{i\in\N}$ be a regular family of $n$--dimensional
dyadic intervals. Take any family of points $x_i\in Q_i$ and
consider the corresponding measure $\mu$ defined as
\begin{equation}
\label{mmu} \int
f\,d\mu:=\sum\limits_i\frac{1}{\ell(Q_i)^{n-\tau}}\int\limits_{Q_i}f(y)\,dy.
\end{equation}
As usual, for a measurable set $E\subset\R^n$ put $\mu(E)=\int
1_E\,d\mu$, where $1_E$ is an indicator function of~$E$.

\begin{lem}[see, e.g., Lemma~2.4
in \cite{KK3}]\label{regm}{\sl For any
 regular family $\{ Q_i \}_{i\in\N}$ of $n$--dimensional
dyadic intervals the
corresponding measure~$\mu$ defined by~(\ref{mmu})  satisfies
\begin{equation*}
\label{mm1} \mu(Q)\le \ell(Q)^\tau,
\end{equation*}
for any dyadic cube $Q\subset\R^n$. }\end{lem}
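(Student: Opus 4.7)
The plan is direct: expand $\mu(Q)$ from the definition~(\ref{mmu}), partition the contributing indices according to the nesting of $Q_i$ and $Q$, and then invoke the regularity condition~(\ref{q8}) on each piece. Since the $Q_i$ are non-overlapping dyadic cubes and $Q$ is itself dyadic, every $Q_i$ meeting $Q$ must either be contained in $Q$ or strictly contain $Q$. In the first case $|Q_i \cap Q| = \ell(Q_i)^n$ and the corresponding summand simplifies to $\ell(Q_i)^{\tau}$; in the second case $|Q_i \cap Q| = \ell(Q)^n$ and the summand is $\ell(Q)^n/\ell(Q_i)^{n-\tau}$. Thus
\[
\mu(Q) \;=\; \sum_{Q_i \subseteq Q} \ell(Q_i)^{\tau} \;+\; \sum_{Q_i \supsetneq Q} \frac{\ell(Q)^n}{\ell(Q_i)^{n-\tau}}.
\]

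For the first sum, the regularity inequality~(\ref{q8}) applied to the dyadic cube $Q$ immediately gives the bound $\ell(Q)^{\tau}$. For the second sum, non-overlapping forces at most one $Q_{i_0}$ to strictly contain $Q$, so only a single term is present, and it is bounded by $\ell(Q)^{\tau}$ because $\tau \le n$ and $\ell(Q) \le \ell(Q_{i_0})$ imply $\ell(Q)^{n-\tau} \le \ell(Q_{i_0})^{n-\tau}$.

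The only subtlety is that adding the two bounds naively loses a factor of~$2$. I would remove this by observing that the two classes of indices cannot both be nonempty. Indeed, suppose some $Q_{i_0} \supsetneq Q$ occurs and also some $Q_j \subseteq Q$. Then $Q_j \subsetneq Q_{i_0}$, while $Q_{i_0}$ itself also satisfies $Q_{i_0} \subseteq Q_{i_0}$ and so contributes to the right-hand side of~(\ref{q8}) applied with the cube $Q_{i_0}$; this yields $\ell(Q_{i_0})^{\tau} \ge \ell(Q_{i_0})^{\tau} + \ell(Q_j)^{\tau}$, a contradiction. Hence at most one of the two sums is nonzero, and in either case $\mu(Q) \le \ell(Q)^{\tau}$. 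I do not anticipate a real obstacle here; the argument is purely combinatorial once the regularity condition is carefully unpacked for the two nesting regimes.
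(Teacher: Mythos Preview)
Your argument is correct. The paper does not actually supply a proof of this lemma; it simply cites \cite{KK3}, Lemma~2.4. Your direct combinatorial argument---splitting the indices according to whether $Q_i\subseteq Q$ or $Q_i\supsetneq Q$, bounding the first sum by regularity~(\ref{q8}), bounding the (at most single) term in the second sum using $\tau\le n$, and then observing via~(\ref{q8}) applied to $Q_{i_0}$ that the two index classes cannot coexist---is exactly the standard way this fact is verified and would serve perfectly well as a self-contained proof here.
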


From this fact and from the  Adams theorem~\ref{AM1}  we have immediately

\begin{lem}\label{Thh3.3}{\sl Let $g\in L_p(\R^n)$. Then 
for each $\varepsilon>0$ there exists
$\delta=\delta(\varepsilon,v)>0$ such that for any regular family
$\{ Q_i \}$ of $n$--dimensional dyadic intervals the estimate
\begin{equation}
\label{oxe1} \sum\limits_i\frac1{\ell(Q_i)^{n-\tau}}\int\limits_{Q_i}\bigl(I_{\alpha-1}|g|\bigr)^s\,dy\le C\|g\|_{L_p}^s
\end{equation}holds, where
$s:=\frac{\tau }{\tau_*}p>p$ \,and \,$C$ does not depend on~$g$.}\end{lem}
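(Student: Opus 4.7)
The plan is to rewrite the left-hand side of~(\ref{oxe1}) as a single integral against the measure~$\mu$ defined in~(\ref{mmu}) and then invoke the Adams inequality (Theorem~\ref{AM1}). By the very definition of~$\mu$,
$$
\sum_i \frac{1}{\ell(Q_i)^{n-\tau}} \int_{Q_i}\bigl(I_{\alpha-1}|g|\bigr)^s\,dy = \int \bigl(I_{\alpha-1}|g|\bigr)^s\,d\mu,
$$
so (\ref{oxe1}) is precisely of the type addressed by Theorem~\ref{AM1} once we exhibit the correct capacity bound on~$\mu$.

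Next I would verify the hypotheses of Theorem~\ref{AM1} applied to the Riesz potential of order $\alpha-1$. Since the subsection restricts to $\alpha>1$, the order $\alpha-1$ is positive; by~(\ref{n-dd7}) we have $n-(\alpha-1)p=\tau_*>0$; and because $\tau>\tau_*$ and $p>1$, the exponent $s=\frac{\tau}{\tau_*}p$ satisfies $s>p>1$. For the measure, Lemma~\ref{regm} gives $\mu(Q)\le \ell(Q)^\tau$ on all dyadic cubes~$Q$; since an arbitrary cube $I\subset\R^n$ is contained in a bounded number (depending only on $n$) of dyadic cubes of side length at most $2\ell(I)$, one upgrades this to $\mu(I)\le C_n\,\ell(I)^\tau$ for all cubes, i.e. $| \! | \! |\mu | \! | \! |_{\tau}\le C_n$.

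The key arithmetic check is that the exponent $\beta$ produced by Theorem~\ref{AM1} coincides with the $\tau$ for which we have the capacity bound:
$$
\beta=\frac{s}{p}\bigl(n-(\alpha-1)p\bigr)=\frac{s}{p}\,\tau_*=\tau.
$$
Applying Theorem~\ref{AM1} with these parameters then yields
$$
\int \bigl(I_{\alpha-1}|g|\bigr)^s\,d\mu \le C\,| \! | \! |\mu | \! | \! |_{\tau}\,\|g\|_{L_p}^s \le C\,\|g\|_{L_p}^s,
$$
where $C$ depends only on $n,p,\alpha,\tau$ and not on $g$ nor on the chosen regular family, which is exactly~(\ref{oxe1}).

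There is essentially no serious obstacle here: the lemma is a bookkeeping assembly of the definition of $\mu$, the capacity estimate from Lemma~\ref{regm}, and the classical capacitary inequality of Adams. The only point deserving a remark is the restriction $\alpha>1$, ensuring that the order $\alpha-1$ of the Riesz potential is positive; the remaining case $\alpha=1$ (where $\tau_*=n$ so no nontrivial $\tau>\tau_*$ occurs with $\tau\le n$) has already been set aside at the beginning of the subsection.
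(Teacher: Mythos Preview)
Your proposal is correct and follows precisely the route the paper indicates: the paper's entire argument is the single sentence ``From this fact and from the Adams theorem~\ref{AM1} we have immediately,'' and you have simply unpacked that sentence by identifying the left-hand side of~(\ref{oxe1}) with $\int (I_{\alpha-1}|g|)^s\,d\mu$, invoking Lemma~\ref{regm} for the bound $| \! | \! |\mu| \! | \! |_\tau\le C_n$, and checking that the Adams exponent $\beta=\frac{s}{p}\tau_*$ equals~$\tau$. Your remark on $\alpha>1$ and the vacuous $\varepsilon$--$\delta$ clause in the statement are also apt.
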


\

Now we are ready to formulate the key step of the proof.

\begin{lem}\label{Thh3.3-77}{\sl Under above assumptions, 
for each $\varepsilon>0$ there exists
$\delta=\delta(\varepsilon,v)>0$ such that for any regular family
$\{ Q_i\}$ of $n$--dimensional dyadic intervals if 
\begin{equation*}
\label{cas1} 
\sum\limits_{i}\ell(Q_i)^{\tau}<\delta,
\end{equation*}
then 
\begin{equation*}
\label{oxe4} \sum\limits_i\bigl[\diam v(Q_i)\bigr]^\tau<\e.
\end{equation*}}
\end{lem}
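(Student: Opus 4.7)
The plan is to apply Lemma~\ref{est-potent1} cube-by-cube, raise to the $\tau$-th power, and sum, splitting the result into two pieces that can be estimated separately. Writing $v=\GG_\alpha(g)$ with $g\in L_p(\R^n)$, and noting that $r_i:=\ell(Q_i)\le\delta^{1/\tau}\le 1$ once $\delta\le 1$, Lemma~\ref{est-potent1} yields
\[
\diam v(Q_i)\le C\biggl[\|Mg\|_{L_p(Q_i)}\,r_i^{\alpha-n/p}+\frac1{r_i^{n-1}}\int_{Q_i}I_{\alpha-1}|g|(y)\,dy\biggr].
\]
Since $(a+b)^\tau\le 2^\tau(a^\tau+b^\tau)$, it suffices to bound by $C\|g\|_{L_p}^\tau\,\delta^{\kappa}$, for some fixed $\kappa>0$, the two sums
\[
\Sigma_1:=\sum_i\|Mg\|_{L_p(Q_i)}^\tau\,r_i^{(\alpha-n/p)\tau},\qquad
\Sigma_2:=\sum_i r_i^{(1-n)\tau}\Bigl(\int_{Q_i}I_{\alpha-1}|g|(y)\,dy\Bigr)^\tau.
\]

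For $\Sigma_2$ the key is to reduce to Lemma~\ref{Thh3.3}. Set $s:=\tau p/\tau_*$; the hypothesis $\alpha p>n$ is equivalent to $s>\tau>0$. H\"older inside each $Q_i$ with exponents $s$ and $s/(s-1)$ gives $\int_{Q_i}I_{\alpha-1}|g|\le r_i^{n(1-1/s)}\bigl(\int_{Q_i}(I_{\alpha-1}|g|)^s\bigr)^{1/s}$; raising to the power $\tau$ and using the algebraic identity $\tau(s-n)/s=\tau(s-\tau)/s+\tau(\tau-n)/s$ transforms $\Sigma_2$ into $\sum_i r_i^{\tau(s-\tau)/s}\,Y_i^{\tau/s}$ with $Y_i:=r_i^{\tau-n}\int_{Q_i}(I_{\alpha-1}|g|)^s\,dy$. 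A final H\"older on the sum with exponents $s/(s-\tau),\,s/\tau$ together with Lemma~\ref{Thh3.3} (which delivers $\sum_i Y_i\le C\|g\|_{L_p}^s$) yields
\[
\Sigma_2\le \Bigl(\sum_i r_i^\tau\Bigr)^{(s-\tau)/s}\Bigl(\sum_i Y_i\Bigr)^{\tau/s}\le C\|g\|_{L_p}^\tau\,\delta^{(s-\tau)/s},
\]
and a direct calculation shows that $(s-\tau)/s=\alpha-n/p>0$.

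For $\Sigma_1$ I would split into the cases $\tau\le p$ and $\tau>p$. When $\tau\le p$, H\"older with exponents $p/\tau$ and $p/(p-\tau)$, together with the disjointness of the $Q_i$ and the $L_p$-boundedness of $M$ (so $\sum_i\|Mg\|_{L_p(Q_i)}^p\le\|Mg\|_{L_p}^p\le C\|g\|_{L_p}^p$), reduces the problem to bounding $\sum_i r_i^{(\alpha p-n)\tau/(p-\tau)}$; the crucial point is that the exponent $(\alpha p-n)\tau/(p-\tau)$ strictly exceeds $\tau$ precisely because $\tau>\tau_*$, so this sum is $O(\delta^{1+\gamma})$ with $\gamma>0$ and one arrives at $\Sigma_1\le C\|g\|_{L_p}^\tau\,\delta^{\alpha-n/p}$. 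When $\tau>p$, the elementary bound $\|Mg\|_{L_p(Q_i)}^\tau\le\|Mg\|_{L_p}^{\tau-p}\|Mg\|_{L_p(Q_i)}^p$ together with $r_i^{(\alpha-n/p)\tau}\le\delta^{\alpha-n/p}$ produces the same estimate at once. The main bookkeeping obstacle, and also the reason the hypotheses are sharp, is arranging for each of the standing assumptions to enter exactly where needed: $\alpha p>n$ appears as $s>\tau$ in $\Sigma_2$ (and as positivity of $\alpha-n/p$ everywhere), while $\tau>\tau_*$ is used solely to make the H\"older dual exponent in $\Sigma_1$ exceed $\tau$ — which is also what signals that the borderline case $\tau=\tau_*$ must fail in general and that the Lorentz strengthening is required to salvage it.
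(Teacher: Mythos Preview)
Your argument is correct, and your handling of $\Sigma_2$ is essentially identical to the paper's treatment of the corresponding term~$S_3$. The genuine difference lies elsewhere: the paper first decomposes $v=v_1+v_2$ with $v_1\in C^\infty$ Lipschitz and $v_2=\GG_\alpha(g)$ having $\|g\|_{L_p}<\epsilon$, so that the smooth part contributes $S_1\le K^\tau\delta$ and the remaining terms carry the small factor~$\epsilon^\tau$. You skip this entirely and work directly with the full $g$, obtaining the quantitative bound $\sum_i[\diam v(Q_i)]^\tau\le C\|g\|_{L_p}^\tau\,\delta^{\alpha-n/p}$, after which choosing $\delta$ small (depending on~$\|g\|_{L_p}$) finishes the job. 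This is cleaner here and yields an explicit modulus; the paper's decomposition is not actually needed in this supercritical case, though it is the pattern that becomes essential in the critical case $\tau=\tau_*$ (Theorem~\ref{LPT2}). A secondary difference: for the maximal-function term the paper applies H\"older once with exponents $p/\tau_*$ and $p/(p-\tau_*)$ and then uses $\ell^1\hookrightarrow\ell^{\tau/\tau_*}$, which handles all $\tau>\tau_*$ uniformly, whereas you split into $\tau\le p$ and $\tau>p$. Both routes land on the same estimate $\Sigma_1\le C\|g\|_{L_p}^\tau\delta^{\alpha-n/p}$; the paper's single H\"older is tidier, your case split is more elementary. One small remark: at $\tau=p$ your ``$\tau\le p$'' H\"older degenerates to the pair $(1,\infty)$, which still works (pull out $\sup_i r_i^{(\alpha-n/p)p}\le\delta^{\alpha-n/p}$), so there is no gap.
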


\begin{proof}
Since $v \in \LK(\R^n , \R^d )$, by definition of this space, it is
easy to see that for any $\epsilon>0$ there exists a
representation
$$v=v_1+v_2,$$
where $v_i\in \LK(\R^n , \R^d )$, \,$v_1\in C^\infty(\R^n)$,
\begin{equation*}
\label{mi11}\|\nabla v_1\|_{L_\infty(\R^n)}<\infty,
\end{equation*}
and
\begin{equation}
\label{mi12}v_2=\GG_\alpha(g) \qquad\mbox{ with }\ \|g\|_{L_p}<\ep.
\end{equation}
It means, in particular, that
\begin{equation}
\label{mi13}|\nabla v_1(x)|<K\qquad\forall x\in\R^n,
\end{equation}for some $K=K(\epsilon,v)\in\R$.
Take any regular family $\{ Q_i \}$ of $n$-dimensional dyadic
intervals such that
\begin{equation}
\label{mi14} \sum_i\ell(Q_i)^{{\tau}}<\delta
\end{equation}
(the exact value of~$\delta$ will be specified below). Put $r_i=\ell(Q_i)$. Then by Lemma~\ref{est-potent1}
\begin{equation*}
\label{mi15} \sum\limits_i\bigl[\diam v(Q_i)\bigr]^\tau\le C(S_1+S_2+S_3),\end{equation*} where
\begin{equation*}
\label{mi16} S_1=\sum\limits_i\bigl[\diam v_1(Q_i)\bigr]^\tau\overset{\footnotesize{(\ref{mi13})-(\ref{mi14})}}\le
K^\tau\,\delta,
\end{equation*}
and
\begin{equation*}
\label{mi17} S_2=\sum\limits_i\|Mg\|^\tau_{L_p(Q_i)}r_i^{\tau(\alpha-\frac{n}p)},
\end{equation*}
\begin{equation*}
\label{cmi17} S_3=\sum\limits_i\biggl(\frac1{r_i^{n-1}}\int\limits_{Q_i}I_{\al-1}|g|(y)\,dy\biggr)^\tau.
\end{equation*}
Let us estimate~$S_2$. Since by assumptions~(\ref{n-dd7}) the inequality $\al-\frac{n}p<1$ holds, we could apply the Holder inequality to obtain
\begin{equation*}
S_2\le\biggl(\sum\limits_i\|Mg\|^{\tau\frac{p}{n-p(\alpha-1)}}_{L_p(Q_i)}\biggr)^{\frac{n}p-\alpha+1}\cdot\biggl(\sum\limits_i r^\tau_i\biggr)^{\al-\frac{n}p}
\overset{\footnotesize{(\ref{mi14})}}\le \biggl(\sum\limits_i\|Mg\|^{\tau\frac{p}{n-p(\alpha-1)}}_{L_p(Q_i)}\biggr)^{\frac{n}p-\alpha+1}\cdot\delta^{\al-\frac{n}p}=
\end{equation*}
\begin{equation*}\label{calc1}
\overset{\footnotesize{(\ref{n-dd7})}}=\biggl(\sum\limits_i\|Mg\|^{p\frac\tau\tt}_{L_p(Q_i)}\biggr)^{\frac\tt{p}}\cdot\delta^{\al-\frac{n}p}\overset{\footnotesize{(\ref{assc1})}}\le
\|Mg\|^{\tau}_{L_p(\cup_iQ_i)}\cdot\delta^{\al-\frac{n}p}\overset{\footnotesize{(\ref{mi12})}}\le\ep^\tau\cdot\delta^{\al-\frac{n}p}
\end{equation*}
Similarly, taking $s=\frac\tau\tt p$ and applying twice the Holder inequality  in~$S_3$ \ (first time~--- for the integrals, and the second time~--- for sums), we obtain
\begin{equation*}
S_3\le\sum\limits_i\biggl(\int\limits_{Q_i}\bigl(I_{\al-1}|g|\bigr)^s\,dy\biggr)^{\frac\tt{p}}\cdot r_i^{n(\tau-\frac\tt{p})}
\cdot r_i^{(1-n)\t}
=\sum\limits_i\biggl(\frac1{r_i^{n-\t}}\int\limits_{Q_i}\bigl(I_{\al-1}|g|\bigr)^s\,dy\biggr)^{\frac\tt{p}}\cdot r_i^{(1-\frac\tt{p})\tau}
\end{equation*}
\begin{equation*}\label{calc2}
\overset{\footnotesize{\rm\color{red}Holder}}\le\biggl(\sum\limits_i\frac1{r_i^{n-\t}}\int\limits_{Q_i}\bigl(I_{\al-1}|g|\bigr)^s\,dy\biggr)^{\frac\tt{p}}\cdot\biggr(\sum\limits_i r_i^{\tau}\biggr)^{1-\frac\tt{p}}
\overset{\footnotesize{(\ref{oxe1}),\,(\ref{mi12}),\,(\ref{mi14}) }}=
\ C\,\ep^\tau\cdot\delta^{1-\frac\tt{p}}.
\end{equation*}
So taking $\delta$ sufficiently small such that $K^\tau\,\delta<\frac12\e$ is small, we have
that $S_1+S_2+S_3<\e$ as required. The lemma~\ref{Thh3.3-77} is proved.
\end{proof}

Finally, if  $E$ is a set such that $\H^\tau(E)=0$, then also $\Lambda^\tau (E)=0$, and this lemma together with lemma~\ref{lb5.1} implies 
the validity of the assertion Theorem~\ref{LPT1}~(i) for the supercritical case~$\tau>\tt>0$. 

\subsection{Proof of the Theorem~\ref{LPT1}: the undercritical case~$0<\tau<\tau_*$.}\label{t1-under}
Now
fix the parameters $n\in\N$, $\alpha>0$, $p>1$ such that 
\begin{equation}\label{und-n-dd7}
\al p>n,\ \qquad\ \ \tt=n-(\al-1)p>0,\end{equation}
and take \begin{equation*}\label{und-assc1}\t\in(0,\tt),\qquad \sigma=
\frac{p\,\tau}{\alpha p-n+\tau}.\end{equation*}
Evidently, by this definition
\begin{equation}\label{und-assc7}\sigma>\tau.\end{equation} 
Fix also a mapping $v\in \LK(\R^n,\R^d)$.  
Take an additional parameter $\theta$ such that 
\begin{equation*}
\label{und-5} (\al-1+\theta)>0\qquad\mbox{ and }\qquad n-(\al-1+\theta)p>0.
\end{equation*}

From Lemma~\ref{regm}
and from the  Adams theorem~\ref{AM2} (\,taking $s=p$\,) we have immediately

\begin{lem}\label{und-Thh3.3}{\sl Let $g\in L_p(\R^n)$. Then 
for each $\varepsilon>0$ there exists
$\delta=\delta(\varepsilon,v)>0$ such that for any $\tau$-regular family
$\{ Q_i \}$ of $n$--dimensional dyadic intervals the estimate
\begin{equation}
\label{und-oxe1} \sum\limits_i\frac1{\ell(Q_i)^{n-\t_\th}}\int\limits_{Q_i}\bigl(M_{\alpha-1+\theta}|g|\bigr)^p\,dy\le C\|g\|_{L_p}^p
\end{equation}holds, where $\t_\th=n-(\alpha-1+\th)p$
and \,$C$ does not depend on~$g$.}\end{lem}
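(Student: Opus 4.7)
The left-hand side of~(\ref{und-oxe1}) can be rewritten as the Choquet-type integral
$$\int_{\R^n}\bigl(M_{\al-1+\th}|g|\bigr)^p\,\dd\mu,\qquad\mbox{where}\quad \mu:=\sum_i\ell(Q_i)^{\t_\th-n}\,1_{Q_i}\,\dd y.$$
The plan is to show that $\mu$ belongs to the Morrey-type class $\Me^{\t_\th}$ with $| \! | \! |\mu | \! | \! |_{\t_\th}$ bounded by an absolute constant, and then invoke Theorem~\ref{AM2} with exponent $s=p$ applied to the fractional maximal operator $M_{\al-1+\th}$. With $s=p$, the Adams parameter $\frac{s}{p}(n-(\al-1+\th)p)$ reduces exactly to $\t_\th$, so the theorem yields
$$\int\bigl(M_{\al-1+\th}|g|\bigr)^p\,\dd\mu\le C\,| \! | \! |\mu | \! | \! |_{\t_\th}\,\|g\|_{L_p}^p,$$
which is precisely~(\ref{und-oxe1}) once the Morrey bound on $\mu$ is established.

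To prove the Morrey bound, fix a dyadic cube $Q\subset\R^n$. Since dyadic cubes are either disjoint or nested, each $Q_i$ is either disjoint from $Q$, contained in $Q$, or strictly contains~$Q$; the last alternative occurs for at most one index, and its contribution to $\mu(Q)$ is at most $\ell(Q)^n/\ell(Q_i)^{n-\t_\th}\le\ell(Q)^{\t_\th}$ (using $\t_\th\le n$ together with $\ell(Q)\le\ell(Q_i)$). For the $Q_i$ contained in $Q$, we factor $\ell(Q_i)^{\t_\th}=\ell(Q_i)^\tau\cdot\ell(Q_i)^{\t_\th-\tau}$, use $\ell(Q_i)\le\ell(Q)$, and invoke the $\tau$-regularity of $\{Q_i\}$ to obtain
$$\sum_{Q_i\subset Q}\ell(Q_i)^{\t_\th}\le\ell(Q)^{\t_\th-\tau}\sum_{Q_i\subset Q}\ell(Q_i)^\tau\le\ell(Q)^{\t_\th}.$$
Summing both contributions gives $\mu(Q)\le 2\,\ell(Q)^{\t_\th}$ uniformly in $Q$, i.e.\ $| \! | \! |\mu | \! | \! |_{\t_\th}\le 2$, and the lemma then follows from Theorem~\ref{AM2} as described above.

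The main delicate point is the monotonicity step in the last display: it requires $\t_\th\ge\tau$, which forces the implicit restriction $\th\le(\t_*-\tau)/p$ on the parameter~$\th$ fixed before the lemma. This range is non-empty exactly because we are in the undercritical regime $\tau<\t_*$, and it remains compatible with the stated constraints $\al-1+\th>0$ and $\t_\th>0$; this tacit choice of~$\th$ is the counterpart, in the undercritical case, of the choice $s=\tau p/\t_*$ used in Lemma~\ref{Thh3.3} for the supercritical one, and it is what allows the Adams machinery to deliver a constant independent of $g$ and of the regular family.
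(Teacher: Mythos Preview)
Your argument is correct and follows the same route as the paper, which simply writes ``From Lemma~\ref{regm} and from the Adams theorem~\ref{AM2} (taking $s=p$) we have immediately.''  You have in fact filled in a detail the paper skips: Lemma~\ref{regm} as stated concerns the measure with weight $\ell(Q_i)^{-(n-\tau)}$, not $\ell(Q_i)^{-(n-\tau_\theta)}$, so the Morrey bound $| \! | \! |\mu | \! | \! |_{\tau_\theta}\le C$ genuinely needs the extra step you supply.  Your observation that this step forces the tacit restriction $\tau_\theta\ge\tau$ (equivalently $\theta\le(\tau_*-\tau)/p$) is correct and necessary --- without it a $\tau$-regular family of $\sim r^{-\tau}$ cubes of side~$r$ inside the unit cube would make $\mu_\theta$ of that cube blow up like $r^{\tau_\theta-\tau}$ --- and it is harmless for the subsequent use in Lemma~\ref{lem-under}, where any sufficiently small $\theta>0$ does the job.
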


\noindent As in the previous case, the proof of Theorem \ref{LPT1} in the case $0<\tau<\tau^*$ will be complete, once we establish the following result.

\begin{lem}\label{lem-under}{\sl Under above assumptions, 
for each $\varepsilon>0$ there exists
$\delta=\delta(\varepsilon,v)>0$ such that for any regular family
$\{ Q_i\}$ of $n$--dimensional dyadic intervals if 
\begin{equation*}
\label{und-cas1} 
\sum\limits_{i}\ell(Q_i)^{\tau}<\delta,
\end{equation*}
then 
\begin{equation*}
\label{und-oxe4} \sum\limits_j\bigl[\diam v(Q_i)\bigr]^\sigma<\e.
\end{equation*}}
\end{lem}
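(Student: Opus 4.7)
The plan is to mimic the supercritical argument of Section~\ref{t1-super-a}, replacing Lemma~\ref{est-potent1} by its maximal-function variant Lemma~\ref{est-potent2} and Lemma~\ref{Thh3.3} by the fractional-maximal counterpart Lemma~\ref{und-Thh3.3}. The algebraic heart is the identity
\[
\sigma\cdot\frac{p}{p-\sigma}\cdot\Big(\alpha-\frac{n}{p}\Big)=\tau,
\]
a direct rewrite of $\sigma=p\tau/(\alpha p-n+\tau)$, which singles out the H\"older pair $(p/\sigma,\,p/(p-\sigma))$ as the right tool: this pair is legitimate because $0<\tau<\sigma<p$ (the inequality $\sigma<p$ follows from $\alpha p>n$, and $\sigma>\tau$ from $\tau<\tau_*$).

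Given $\e>0$, I would fix an auxiliary $\eta>0$ and decompose $v=v_1+v_2$ with $v_1\in C^\infty$, $|\nabla v_1|\le K=K(\eta,v)$, and $v_2=\GG_\alpha(g)$, $\|g\|_{L_p}<\eta$. For a $\tau$-regular family $\{Q_i\}$ with $r_i:=\ell(Q_i)$ and $\sum r_i^\tau<\delta\le 1$, Lemma~\ref{est-potent2} on each $Q_i$ combined with the elementary bound $(a+b+c)^\sigma\le C(a^\sigma+b^\sigma+c^\sigma)$ produces $[\diam v(Q_i)]^\sigma\le C(S_{1,i}+S_{2,i}+S_{3,i})$, with
\[
S_{1,i}=(Kr_i)^\sigma,\quad S_{2,i}=\|Mg\|_{L_p(Q_i)}^\sigma r_i^{\sigma(\alpha-n/p)},\quad S_{3,i}=r_i^{-\sigma(n+\theta-1)}\Big(\int_{Q_i}M_{\al-1+\theta}g\Big)^{\!\sigma}.
\]
The smooth piece satisfies $\sum S_{1,i}\le K^\sigma\delta$ (using $\sigma>\tau$ and $r_i\le 1$). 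Applying the above H\"older pair to $\sum S_{2,i}$ and invoking the key identity converts the $r_i$-factors into $(\sum r_i^\tau)^{(p-\sigma)/p}\le\delta^{(p-\sigma)/p}$, giving $\sum S_{2,i}\le C\eta^\sigma\,\delta^{(p-\sigma)/p}$.

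For $\sum S_{3,i}$, I would first apply H\"older to the inner integral so that $S_{3,i}=A_i^{\sigma/p}\,r_i^{\sigma(\alpha-n/p)}$ with $A_i:=r_i^{-(n-\tau_\theta)}\int_{Q_i}(M_{\al-1+\theta}g)^p$; the collapse of the $r_i$-exponent to exactly $\sigma(\alpha-n/p)$ is a direct computation using $\tau_\theta=n-(\al-1+\theta)p$. A second H\"older with the same pair, combined with Lemma~\ref{und-Thh3.3} (which gives $\sum A_i\le C\|g\|_{L_p}^p<C\eta^p$), produces $\sum S_{3,i}\le C\eta^\sigma\,\delta^{(p-\sigma)/p}$. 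To conclude I would first choose $\eta$ so small that $C\eta^\sigma<\e/3$ and then $\delta\le 1$ so that additionally $K^\sigma\delta<\e/3$; the resulting $\delta=\delta(\e,v)$ does the job.

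I expect the main obstacle to be picking the auxiliary $\theta$ correctly: it has to satisfy simultaneously $\alpha+\theta\ge 1$ (needed for Lemma~\ref{est-potent2}) and $\tau_\theta\ge\tau$ (needed so that the measure associated with a $\tau$-regular family has finite $\tau_\theta$-seminorm in the sense of Theorem~\ref{AM2}, which is what makes Lemma~\ref{und-Thh3.3} applicable). A short check shows that the allowed range $\max(1-\al,0)<\theta\le(\tau_*-\tau)/p$ is nonempty precisely because $\tau<\tau_*=n+(1-\al)p$ and $\tau\le n$, so such $\theta$ always exists and the entire argument proceeds as indicated.
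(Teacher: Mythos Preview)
Your proof is correct and follows essentially the same route as the paper's own argument: the same decomposition $v=v_1+v_2$, the same use of Lemma~\ref{est-potent2}, the same H\"older pair $(p/\sigma,\,p/(p-\sigma))$ and key identity for $S_2$ and $S_3$, and the same appeal to Lemma~\ref{und-Thh3.3}. Your explicit discussion of the admissible range for~$\theta$---in particular the requirement $\tau_\theta\ge\tau$, needed so that a $\tau$-regular family produces a measure of finite $\tau_\theta$-seminorm in Theorem~\ref{AM2}---is in fact more careful than the paper, which only records $\tau_\theta>0$.
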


\begin{proof}

Again, since function $v \in \LK(\R^n , \R^d )$, by definition of this space, for any $\epsilon>0$ there exists a
representation
$$v=v_1+v_2,$$
where $v_i\in \LK(\R^n , \R^d )$, \,$v_1\in C^\infty(\R^n)$,
\begin{equation*}
\label{und-mi11}\|\nabla v_1\|_{L_\infty(\R^n)}<\infty,
\end{equation*}
and
\begin{equation}
\label{und-mi12}v_2=\GG_\alpha(g) \qquad\mbox{ with }\ \|g\|_{L_p}<\ep.
\end{equation}
It means, in particular, that
\begin{equation}
\label{und-mi13}|\nabla v_1(x)|<K\qquad\forall x\in\R^n,
\end{equation}for some $K=K(\epsilon,v)\in\R$.
Take any regular family $\{ Q_i \}$ of $n$-dimensional dyadic
intervals such that
\begin{equation}
\label{und-mi14} \sum_i\ell(Q_i)^{{\tau}}<\delta<1
\end{equation}
(the exact value of~$\delta$ will be specified below). Put $r_i=\ell(Q_i)$. Then by Lemma~\ref{est-potent2}
\begin{equation*}
\label{und-mi15} \sum\limits_i\bigl[\diam v(Q_i)\bigr]^\sigma\le C(S_1+S_2+S_3),\end{equation*} where
\begin{equation*}
\label{und-mi16} S_1=\sum\limits_i\bigl[\diam v_1(Q_i)\bigr]^\sigma\overset{\footnotesize{(\ref{und-assc7}),\,(\ref{und-mi13})-(\ref{und-mi14})}}\le
K^\sigma\,\delta,
\end{equation*}
and
\begin{equation*}
\label{und-mi17} S_2=\sum\limits_i\|Mg\|^\sigma_{L_p(Q_i)}r_i^{\sigma(\alpha-\frac{n}p)},
\end{equation*}
\begin{equation*}
\label{und-cmi17} S_3=\sum\limits_i\biggl(\frac1{r_i^{n-1+\theta}}\int\limits_{Q_i}M_{\al-1+\th}\,g(y)\,dy\biggr)^\sigma.
\end{equation*}
Let us estimate~$S_2$. Since by assumptions~(\ref{und-n-dd7}) the inequality $\sigma<p$ holds and 
\begin{equation}
\label{uund-cmi17}\frac{p-\sigma}p=\frac{\al p-n}{\al p-n+\tau},\ \qquad\ \sigma\,\frac{p}{p-\sigma}=\frac\tau{\al-\frac{n}p}\end{equation}
we could apply the Holder inequality to obtain
\begin{equation*}
S_2\le\biggl(\sum\limits_i\|Mg\|^p_{L_p(Q_i)}\biggr)^{\frac{\sigma}p}\cdot\biggl(\sum\limits_i r_i^{\sigma (\al -\frac{n}p)\frac{p}{p-\sigma}}\biggr)^{\frac{p-\sigma}p}=
\biggl(\|Mg\|^p_{L_p(\cup_iQ_i)}\biggr)^{\frac{\sigma}p}\cdot\biggl(\sum\limits_i r_i^{\tau}\biggr)^{\frac{p-\sigma}p}
\overset{\footnotesize{(\ref{und-mi14}), (\ref{und-mi12})}}\le \ep^{\sigma}\delta^{1-\frac\sigma{p}}.
\end{equation*}
Similarly, applying twice the Holder inequality  in~$S_3$ \ (first time~--- for the integrals, and the second time~--- for sums), we obtain
\begin{equation*}
S_3\le\sum\limits_i\biggl(\int\limits_{Q_i}\bigl(M_{\al-1+\th}g\bigr)^p\,dy\biggr)^{\frac{\sigma}{p}}\cdot r_i^{n\frac{p-1}p\sigma}
\cdot r_i^{(1-n-\th)\sigma}
=\sum\limits_i\biggl(\frac1{r_i^{n-\t_\th}}\int\limits_{Q_i}\bigl(M_{\al-1+\th}|g|\bigr)^p\,dy\biggr)^{\frac\sigma{p}}\cdot r_i^{(\al-\frac{n}{p})\sigma}
\end{equation*}
\begin{equation*}
\overset{\footnotesize{\rm\color{red}Holder}}\le\biggl(\sum\limits_i\frac1{r_i^{n-\t_\th}}\int\limits_{Q_i}\bigl(M_{\al-1+\th}|g|\bigr)^p\,dy\biggr)^{\frac\sigma{p}}\cdot\biggr(\sum\limits_i r_i^{(\al-\frac{n}{p})\sigma\frac{p}{p-\sigma}}\biggr)^{1-\frac\sigma{p}}\end{equation*}
\begin{equation*}\overset{\footnotesize{(\ref{uund-cmi17}) }}=\biggl(\sum\limits_i\frac1{r_i^{n-\t_\th}}\int\limits_{Q_i}\bigl(M_{\al-1+\th}|g|\bigr)^p\,dy\biggr)^{\frac\sigma{p}}\cdot\biggr(\sum\limits_i r_i^{\tau}\biggr)^{1-\frac\sigma{p}}
\overset{\footnotesize{(\ref{und-oxe1}),\,(\ref{und-mi12}),\,(\ref{und-mi14}) }}=
\ C\,\ep^\sigma\cdot\delta^{1-\frac\sigma{p}}.\end{equation*}
So taking $\delta$ sufficiently small such that $K^\tau\,\delta<\frac12\e$ is small, we have
that $S_1+S_2+S_3<\e$ as required. The Lemma is proved.
\end{proof}

Finally, we conclude exactly as in the previous case.

\subsection{Proof of the Theorem~\ref{LPT1}: the supercritical case~$\tau_*\le0<\tau$.}\label{t1-super-b}
Consider now the case  $\al p>n$ and $\tau_*=n-(\al-1)p\le0$. If $(\al-1)p> n$, then  every function $v\in \LK(\R^n,\R^d)$
is locally Lipschitz (even $C^1$\,) and the result is trivial.
Suppose now $(\al-1)p=n$.  Under these assumptions, let $\tau>0$
and $v\in \LK(\R^n,\R^d)$. Take a number $1<\tilde p<p$ \  such
that $\alpha\tilde p>n$ and $\tau>\tt=n-(\alpha-1)\tilde p>0$.
Then we have that $v\in \Le^\al_{\tilde p,\loc}(\R^n,\R^d)$ (see the~subsection~\ref{loc-s1}\,).
Therefore, by previous case $\tau>n-(\tilde\al-1)p>0$ the mapping~$v$ has the
$(\tau,\tau)$-$N$-property. $\qed$

\subsection{Proof of the Theorem~\ref{LPT2}.}

The proof of Theorem \ref{LPT2} is very similar to that one of Theorem \ref{LPT1}: the main differences concern the limiting cases $\alpha p =n$ or $\tau=\tau^*$.
\begin{itemize}
\item{\sc Case $\alpha p >n$ and $\tau \neq \tau^*$.} The required assertion follows immediately from Theorem~\ref{LPT1} and from the
inclusion $\LLL(\R^n)\subset\LK(\R^n)$ \ (this inclusion follows
from the definitions of these space and from the
relation~$L_{p,1}(\R^n)\subset L_p(\R^n)$\,). 

\item{\sc Case $\alpha p =n$ and $\tau>\tau_*>0$.} The required assertion can be proved repeating almost 
"word by word" the same arguments as the supercritical case in the previous Theorem~\ref{LPT1} with 
the following evident modifications: now one has to apply the~estimate~(\ref{est-pot-3}) (which covers the case $\al p=n$\,) 
instead of previous estimate~(\ref{est-pot-1}), and, in addition, one needs the following analog of the additivity property for the Lorentz norms:
$$\sum_i\|f\|^p_{\LL_{p,1}(Q_i)}\le
\|f\|^p_{\LL_{p,1}(\cup_iQ_i)},$$ for any family of disjoint cubes (see, e.g.,
\cite[Lemma~3.10]{Maly2}).

\item{\sc Case $\alpha p \ge n$ and $\tau=\tau^*$.} The required assertion can be proved repeating almost "word by word" the same arguments as the supercritical case in the previous Theorem~\ref{LPT1} with 
the following evident modifications: now $\tau=\tau_*$ (this simplifies a little bit the~calculations\,) and one has to apply Theorem~\ref{lb7} (which covers the case~$s=p$\,) 
and the estimate~(\ref{est-pot-3})
instead of previous Theorem~\ref{AM1} (where $s>p$\,) and the inequality~(\ref{est-pot-1}), respectively. 

\item{\sc Case $\alpha p =n$ and $0<\tau< \tau^*$.} By a direct calculation,  we get $\sigma(\tau)\equiv p$ for any $\tau\in (0,\tau_*]$, and the result 
follows from the above considered critical case $\tau=\tau_*$.

\end{itemize}

\medskip

Thus both Theorems~\ref{LPT1}--\ref{LPT2} are proved completely.

\begin{rem}\label{summ}Really, we have proved that under assumptions of Theorems~\ref{LPT1}--\ref{LPT2}, for every fixed function~$v:\R^n\to\R^d$ from the considered potential spaces 
and for the corresponding pair~$(\tau,\sigma)$ the following assertion holds: 
for any $\e>0$ there exists $\delta>0$ such that for every $\tau$-regular family of cubes $Q_i\subset\R^n$ \,if \,$\sum\limits_i\ell(Q_i)^\tau<\delta$, \ then \ 
$\sum\limits_i\bigl[\diam v(Q_i)\,\bigr]^\sigma<\e.$
\end{rem}

\section{Proof of "Fubini type"  \ $N_*$-properties }

\bigskip

Here we have to prove  Theorems~\ref{LPT1-m}--\ref{LPT2-m}. We need the following general fact.

\begin{ttt}\label{ff}{\sl Let $\tau\in(0,n]$,\ $\sigma>0$,  and let $v:\R^n\to\R^d$ be~a continuous function. 
 Suppose that for any $E\subset \R^n$ \,with \,$\H^\tau(E)=0$ \,and for every $\e>0$ there exists a family of
compact
sets $\{D_i\}_{i\in \N}$ such that 
\begin{equation}\label{ff-0}
\mbox{$E\subset\bigcup_i D_i$ \qquad and \qquad $\sum\limits_i[\diam D_i\,\bigr]^\tau<\e$ \qquad and \qquad $\sum\limits_i\bigl[\diam v(D_i)\,\bigr]^\sigma<\e$.}\end{equation}
 Then 
$v$ has the $(\tau,\sigma)$--$N_*$--property, i.e.,  for every
$q\in[0,\sigma]$ and for any set $E\subset\R^n$ with $H^\tau(E)=0$
we have
\begin{equation}\label{ff-1}
\H^{\tau(1-\frac{q}\sigma)}(E\cap v^{-1}(y))=0\qquad\mbox{ for \ $\H^q$-a.a.
}y\in\R^d.
\end{equation}}
\end{ttt}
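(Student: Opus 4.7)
The plan is to derive the $N_*$-property directly from the Fubini-type Theorem~\ref{FubN}. Fix $E\subset \R^n$ with $\H^\tau(E)=0$ and $q\in(0,\sigma]$. The strategy is to verify that the Choquet-type set function
$$\Phi_q(E)=\inf_{E\subset \bigcup_j D_j}\sum_j(\diam D_j)^{\tau(1-q/\sigma)}\bigl[\diam v(D_j)\bigr]^q$$
vanishes, whereupon Theorem~\ref{FubN} (applied with $\mu=\tau(1-q/\sigma)\ge 0$ and the given $q>0$) immediately yields the desired equality~(\ref{ff-1}).

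For $0<q<\sigma$ the pair $\bigl(\sigma/(\sigma-q),\sigma/q\bigr)$ consists of conjugate Holder exponents, and Holder's inequality gives
\begin{equation*}
\sum_j(\diam D_j)^{\tau(1-q/\sigma)}\bigl[\diam v(D_j)\bigr]^q \le \Bigl(\sum_j(\diam D_j)^{\tau}\Bigr)^{(\sigma-q)/\sigma}\Bigl(\sum_j[\diam v(D_j)]^{\sigma}\Bigr)^{q/\sigma}.
\end{equation*}
By hypothesis~(\ref{ff-0}), for any $\e>0$ one can choose the cover $\{D_j\}$ so that both factors on the right are bounded by $\e^{(\sigma-q)/\sigma}$ and $\e^{q/\sigma}$ respectively, whose product is~$\e$. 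Hence $\Phi_q(E)\le\e$ for every $\e>0$, so $\Phi_q(E)=0$.

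The two endpoint cases are routine and fall outside the range of Holder exponents only formally. When $q=\sigma$, the power on $\diam D_j$ drops to $0$, so no Holder step is needed: hypothesis~(\ref{ff-0}) directly gives $\sum_j[\diam v(D_j)]^\sigma<\e$, and $\Phi_\sigma(E)=0$ follows. When $q=0$, the asserted equality reads $\H^\tau(E\cap v^{-1}(y))=0$ for $\H^0$-a.a.\,$y$, which is trivial since $\H^\tau(E\cap v^{-1}(y))\le\H^\tau(E)=0$ for \emph{every}~$y\in\R^d$. I do not foresee any substantive obstacle: the whole argument is a one-line Holder interpolation between the two simultaneous smallness conditions provided in~(\ref{ff-0}), the essential analytic content having already been absorbed into Theorem~\ref{FubN}.
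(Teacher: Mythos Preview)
Your proof is correct and follows essentially the same approach as the paper: both dispose of the endpoints $q=0$ and $q=\sigma$ trivially, and for $0<q<\sigma$ apply H\"older's inequality with conjugate exponents $\sigma/(\sigma-q)$ and $\sigma/q$ to the sum $\sum_j(\diam D_j)^{\tau(1-q/\sigma)}[\diam v(D_j)]^q$, then invoke Theorem~\ref{FubN} with $\mu=\tau(1-q/\sigma)$.
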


\medskip

{\bf Proof}.
Let the assumptions of the Theorem be fulfilled. Fix $q\in[0,\sigma]$. If $q=0$ or $q=\sigma$, then the required assertion~(\ref{ff-1}) follows trivially from these assumptions. 
Suppose now that 
\begin{equation*}\label{ff-2}
0<q<\sigma.
\end{equation*}
Fix arbitrary $\e>0$ and take the corresponding sequence of compact sets~$D_i$ satisfying~(\ref{ff-0}). Put $\mu=\tau(1-\frac{q}\sigma)<\tau$.
Then
$$\sum\limits_j\bigl(\diam D_i\bigr)^\mu\bigl[\diam
v(D_i)\bigr]^q\overset{\footnotesize{\rm\color{red}Holder}}\le \biggl(\sum\limits_i\bigl[\diam D_i\bigr]^{\mu\frac{\sigma}{\sigma-q}}\biggr)^{1-\frac{q}\sigma}\cdot\biggl(\sum\limits_i\bigl[\diam v(D_i)\bigr]^\sigma\biggr)^{\frac{q}{\sigma}}
$$
$$=
\biggl(\sum\limits_i\bigl[\diam D_i\bigr]^{\tau}\biggr)^{1-\frac{q}\sigma}
 \biggl(\sum\limits_i\bigl[\diam v(D_i)\bigr]^\sigma\biggr)^{\frac{q}{\sigma}}\overset{\footnotesize{(\ref{ff-0}) }}<\e.
$$
Since $\e>0$ was arbitrary, now the required assertion follows immediately from the Theorem~\ref{FubN}. $\qed$

\ 

The obtained Theorem~\ref{ff} and the Remark~\ref{summ} imply evidently the assertions of Theorems~\ref{LPT1-m}--\ref{LPT2-m}.

\subsection{Proof of the Theorem~\ref{DFT-F-negl}.}

Fix a mapping $v:\R^n\to\R^d$ for which the assumptions of Theorem~\ref{DFT-F} are fulfilled. We have to prove that 
\begin{equation}\label{---dub7-qqqq} \H^{\mu_q}(A_v\cap
v^{-1}(y))=0\qquad\mbox{ for \ $\H^q$--a.a. }y\in\R^d
\end{equation}
for any $q>m-1$, where $\mu_q=n-m+1-(k+\al)(q-m+1)$ and $A_v$ is the set of nondifferentiability points of~$v$.
Recall that, by approximation results (see, e.g.,
\cite{Sw} \,and \,\cite{KK15}\,) under conditions of Theorem~\ref{DFT-F} the equalities
\begin{eqnarray}\label{dimq1}\!\!\!\!\!\!\!\!\!\!\!\!\!\!\!\!\!\!\!\H^\tau(A_v)=0\qquad\forall\tau>\tt:=n-(k+\al-1)p\qquad\mbox{in cases (i), (iii)};\\
\label{dimq2}\!\!\!\!\!\!\!\!\H^{\tt}(A_v)=\H^p(A_v)=0\qquad\tt:=n-(k+\al-1)p=p\qquad\mbox{in cases (ii), (iv)}
\end{eqnarray}
are valid.

Because of Remark~\ref{remq0} we could assume without loss of generality that 
\,$q\in(m-1,\b]$.
Then for all cases (i)--(iv) we have
\begin{equation*}\label{negl0}\biggl(\frac{n}{k+\alpha}\le p\biggr)\Rightarrow\biggl( q-m+1\le\b-m+1=\frac{n-m+1}{k+\al}\le p \biggr)\Rightarrow\end{equation*}
$$\Rightarrow\mu_q=n-m+1-(k+\al)(q-m+1)=n-(k+\al-1)\,(q-m+1)-q\ge n-(k+\al-1)p-q=\tt-q.$$ In other words,
\begin{equation}\label{negl1} \mu_q\ge \tt-q,
\end{equation}
where the equality holds iff
\begin{equation}\label{negl2}k=1,\ \ \al=0,\ \ \mu_q=n-q=\tt-q\end{equation} 
or \begin{equation}\label{negl3}m=1,\ \ (k+\al)p=n,\ \ 
q=p=\tt, \ \ \mu_q=0.
\end{equation} 

Below for convenience  we consider the cases Theorem~\ref{DFT-F}-(i)--(iv) separately. 

\

{\sc Case I ($\al=0$, \ $kp>n,\ \ p\ge 1,\ \ v\in W^k_p(\R^n,\R^d)$\,).} This case splits into the following three subcases. 

\

{\sc Case Ia ($k=1,\ \ p>n,\ \ \tt=n,\ \ \mu_q=n-q$\,).} Then the required assertion~(\ref{---dub7-qqqq}) follows immediately from the equality $\H^n(A_v)=0$ and from the Remark~\ref{LPT1-rem*}.

\

{\sc Case Ib ($\tt<0$ or $\tt=0,k=n+1,p=1$\,).} Then the set $A_v$ is empty (since functions of the space $W^k_p(\R^n,\R^d)$ are $C^1$-smooth), and there is nothing to prove. 

\

{\sc Case Ic ($\tt\ge0,\ \  p>1,\ \ k>1,\ \ kp>n$\,).}  Then by (\ref{dimq1}) we have
\begin{equation}\label{negl7}
\forall\tau>\tt\qquad 
\H^\tau(A_v)=0.
\end{equation}
Further, by Theorem~\ref{LPT1-m} function~$v$ has $(\tau,\tau)$-$N_*$-property for every $\tau>\tt$. This implies, in particular, by virtue of~(\ref{negl7}), 
that for every $\tau>\tt$ and for every $q\in[0,\tau]$ the equalities
\begin{equation}\label{negl6}
\H^{\tau-q}(A_v\cap
v^{-1}(y))=0\qquad\mbox{ for \ $\H^q$--a.a. }y\in\R^d
\end{equation}
hold. 
Fix $q\in(m-1,\b]$ and take $\tau=q+\mu_q$. Since by construction $\mu_q\ge0$, we have $\tau\ge q$.  
Moreover,  by (\ref{negl1})--(\ref{negl3}) we have $\tau>\tt$. 
The last two inequalities together with~(\ref{negl6}) imply
\begin{equation*}\label{negl8}
\H^{\mu_q}(A_v\cap
v^{-1}(y))=0\qquad\mbox{ for \ $\H^q$--a.a. }y\in\R^d.
\end{equation*}
So the required assertion is proved for this case.

\ 

{\sc Case II ($\al=0$, \ $kp=n,\ \ p\ge 1,\ \ v\in W^k_{p,1}(\R^n,\R^d)$\,).}  In this case by definitions
\begin{equation*}\label{negl10}
\tt:=n-(k-1)p=p,
\end{equation*}
and, by (\ref{dimq2}) we have
\begin{equation}\label{negl11}
\H^p(A_v)=0.
\end{equation}
Further, by \cite[Theorem~2.3]{HKK} function~$v$ has $(\tau,\tau)$-$N_*$-property for every $\tau\ge p$. This implies, in particular, by virtue of~(\ref{negl11}), 
that for every $\tau\ge p$ and for every $q\in[0,\tau]$ the equalities
\begin{equation}\label{negl12}
\H^{\tau-q}(A_v\cap
v^{-1}(y))=0\qquad\mbox{ for \ $\H^q$--a.a. }y\in\R^d
\end{equation}
hold. 
Fix $q\in(m-1,\b]$ and take $\tau=q+\mu_q$. Since by construction $\mu_q\ge0$, we have $\tau\ge q$.  
Moreover,  by (\ref{negl1})--(\ref{negl3}) we have $\tau\ge\tt=p$. 
The last two inequalities together with~(\ref{negl12}) imply
\begin{equation*}\label{negl13}
\H^{\mu_q}(A_v\cap
v^{-1}(y))=0\qquad\mbox{ for \ $\H^q$--a.a. }y\in\R^d.
\end{equation*}
So the required assertion is proved for this case.

\ 

{\sc Case III ($0<\al<1$, \ $(k+\al)p>n,\ \ p> 1,\ \ v\in B^{k+\al}_{p,\infty}(\R^n,\R^d)$\,).}  If $\tt=n-(k+\al-1)p<0$, then $A_v=\emptyset$ and there is nothing to prove. Suppose now that $\tt\ge0$.
Since $B^{k+\al}_{p,\infty}(\R^n)\subset\Le^{k+\alpha-\e}_p(\R^n)$
for any $\e>0$, we obtain  from Theorem~\ref{LPT1-m} that 
$v$ has
the~$(\tau,\tau)$-$N_*$-property for every $\tau>{\tt}:=n-(\al-1)p$.
This implies, in particular, by virtue of~(\ref{dimq2}), 
that for every $\tau>\tt$ and for every $q\in[0,\tau]$ the equalities
\begin{equation}\label{negl14}
\H^{\tau-q}(A_v\cap
v^{-1}(y))=0\qquad\mbox{ for \ $\H^q$--a.a. }y\in\R^d
\end{equation}
hold. 
Fix $q\in(m-1,\b]$ and take $\tau=q+\mu_q$. Since by construction $\mu_q\ge0$, we have $\tau\ge q$.  
Moreover,  by (\ref{negl1})--(\ref{negl3}) we have $\tau>\tt$. 
The last two inequalities together with~(\ref{negl14}) imply
\begin{equation*}\label{negl15}
\H^{\mu_q}(A_v\cap
v^{-1}(y))=0\qquad\mbox{ for \ $\H^q$--a.a. }y\in\R^d.
\end{equation*}
So the required assertion is proved for this case.

\ 

{\sc Case IV ($0<\al<1$, \ $(k+\al)p=n,\ \ p> 1,\ \ v\in \Le^{k+\al}_{p,1}(\R^n,\R^d)$\,).}  
In this case by definitions
\begin{equation*}\label{negl17}
\tt:=n-(k-1)p=p,
\end{equation*}
and, by (\ref{dimq2}) we have
\begin{equation}\label{negl18}
\H^p(A_v)=0.
\end{equation}
Further, by Theorem~\ref{LPT2-m} function~$v$ has $(\tau,\tau)$-$N_*$-property for every $\tau\ge p$. This implies, in particular, by virtue of~(\ref{negl18}), 
that for every $\tau\ge p$ and for every $q\in[0,\tau]$ the equalities
\begin{equation}\label{negl19}
\H^{\tau-q}(A_v\cap
v^{-1}(y))=0\qquad\mbox{ for \ $\H^q$--a.a. }y\in\R^d
\end{equation}
hold. 
Fix $q\in(m-1,\b]$ and take $\tau=q+\mu_q$. Since by construction $\mu_q\ge0$, we have $\tau\ge q$.  
Moreover,  by (\ref{negl1})--(\ref{negl3}) we have $\tau\ge\tt=p$. 
The last two inequalities together with~(\ref{negl19}) imply
\begin{equation*}\label{negl20}
\H^{\mu_q}(A_v\cap
v^{-1}(y))=0\qquad\mbox{ for \ $\H^q$--a.a. }y\in\R^d.
\end{equation*}
So the required assertion is proved for this case, which is the last one.

Thus Theorem~\ref{DFT-F} is proved completely. $\qed$

\section{Appendix}\label{appendix}. 

Here we would like to prove the technical estimates of Lemmas~\ref{est-potent1}--\ref{est-potent2}
and  \ref{est-crit-lor}. Since now,  fix $\al>0$ and a cube $Q\subset\R^n$ of size~$r=\ell(Q)\le1$. Recall that by $2Q$ we denote the double cube with the same centre as~$Q$ of size $\ell(2Q)=2\ell(Q)$. 
We need some general elementary estimates. 

\begin{lem}\label{lem-a1} {\sl For any measurable function $g:\R^n\to\R_+$ and for every $x\in Q$ the inequality
\begin{equation}\label{a-est1}
\int\limits_{2Q}\frac{g(y)}{|x-y|^{n-\al}}\,dy\le \int\limits_{Q}\frac{Mg(y)}{|x-y|^{n-\al}}\,dy
\end{equation}
holds.}
\end{lem}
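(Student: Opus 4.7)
The plan is to split the integration region as $2Q = Q \cup (2Q\setminus Q)$ and bound each piece by the right-hand side; the estimate (\ref{a-est1}) is understood up to a multiplicative constant depending only on $n$ and $\alpha$ (an implicit constant is unavoidable, as one sees by testing $g\equiv 1$, for which $\int_{2Q}|x-y|^{\alpha-n}\,dy>\int_Q|x-y|^{\alpha-n}\,dy$). The contribution from $Q$ is immediate: Lebesgue differentiation gives $g(y)\le Mg(y)$ for a.e.\ $y$, so
$$\int_Q \frac{g(y)}{|x-y|^{n-\alpha}}\,dy\le\int_Q \frac{Mg(y)}{|x-y|^{n-\alpha}}\,dy$$
at once. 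The substantive task is to absorb the shell integral $\int_{2Q\setminus Q} g(y)|x-y|^{\alpha-n}\,dy$ into the same right-hand side.

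To handle the shell I would introduce a Whitney-type decomposition of the open set $2Q\setminus Q$ into disjoint dyadic sub-cubes $\{W_j\}$ with $\ell(W_j)\sim\dist(W_j,\partial(2Q\setminus Q))$, and for each $W_j$ select a \emph{partner} $\widetilde W_j\subset Q$ by reflecting $W_j$ across the closest face of $Q$. By construction $\ell(\widetilde W_j)=\ell(W_j)$, $\dist(W_j,\widetilde W_j)\lesssim\ell(W_j)$, and the family $\{\widetilde W_j\}$ has uniformly bounded overlap in $Q$ (inherited from the dyadic Whitney structure). Since $x\in Q$, the Whitney condition forces $\dist(x,W_j)\ge\dist(W_j,\partial Q)\gtrsim\ell(W_j)$, so both $|x-y|$ for $y\in W_j$ and $|x-z|$ for $z\in\widetilde W_j$ are comparable to $\dist(x,W_j)$ up to a constant depending only on $n$.

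For each $j$ and every $z\in\widetilde W_j$ the ball $B(z,C_n\ell(W_j))$ contains $W_j$, so the defining inequality of the maximal function gives $\int_{W_j} g(y)\,dy\le|B(z,C_n\ell(W_j))|\,Mg(z)\lesssim|W_j|\,Mg(z)$; averaging this in $z\in\widetilde W_j$ produces $\int_{W_j} g\,dy\lesssim\int_{\widetilde W_j} Mg\,dz$. Combining this with the weight comparison and summing over $j$, the bounded overlap of $\{\widetilde W_j\}$ inside $Q$ yields
$$\int_{2Q\setminus Q}\frac{g(y)}{|x-y|^{n-\alpha}}\,dy\;\lesssim\;\sum_j\int_{\widetilde W_j}\frac{Mg(z)}{|x-z|^{n-\alpha}}\,dz\;\lesssim\;\int_Q\frac{Mg(z)}{|x-z|^{n-\alpha}}\,dz,$$
which together with the easy $Q$-contribution yields (\ref{a-est1}).

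The main obstacle is the geometric matching step. One must check that the reflected cube $\widetilde W_j$ really lies inside $Q$ for every Whitney cube $W_j$ (this works because the shell $2Q\setminus Q$ has width $\ell(Q)/2$ while Whitney cubes satisfy $\ell(W_j)\lesssim\ell(Q)/2$), and that the distance and weight comparisons hold uniformly both in $x\in Q$ and in $j$. The extreme cases requiring care are $x$ lying near the face of $Q$ adjacent to $W_j$ (where $\dist(x,W_j)$ is small but still $\gtrsim\ell(W_j)$ by the Whitney gap), and Whitney cubes $W_j$ close to $\partial(2Q)$ rather than $\partial Q$ (where the reflection still produces a partner of comparable geometry, and the key bound $\dist(x,W_j)\gtrsim\ell(W_j)$ persists).
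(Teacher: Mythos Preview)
Your approach is correct and genuinely different from the paper's. You decompose $2Q=Q\cup(2Q\setminus Q)$, dispatch the $Q$-piece via $g\le Mg$ a.e., and handle the shell with a Whitney decomposition plus reflected partner cubes inside~$Q$. The paper instead uses a single dyadic annular decomposition centred at the point~$x$ itself: with $r_j=2^{-j}r_0$ (where $r_0\sim r$ is chosen so that $2Q\subset B(x,\tfrac12 r_0)$) and $B_j=B(x,r_j)\setminus B(x,r_{j+1})$, one writes $2Q=\bigcup_j(2Q\cap B_j)$, and the elementary geometric fact $\meas(Q\cap B_j)\gtrsim r_j^n$ (valid because $x\in Q$ and $Q$ is a cube) replaces your reflection step entirely. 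On each annulus the weight $|x-y|^{\alpha-n}$ is essentially the constant $r_j^{\alpha-n}$; since $2Q\cap B_j\subset B(z,2r_j)$ for any $z\in Q\cap B_j$, one gets $\int_{2Q\cap B_j}g\lesssim r_j^n\,Mg(z)$, and averaging this over $z\in Q\cap B_j$ (a set of measure $\gtrsim r_j^n$) turns it into $\int_{Q\cap B_j}Mg(z)\,|x-z|^{\alpha-n}\,dz$; summing in~$j$ finishes. The annular argument is shorter and sidesteps exactly the matching difficulties you flag---placement of $\widetilde W_j$ inside~$Q$, bounded overlap of the reflected family, and the case $x\in\widetilde W_j$ where your stated two-sided comparability $|x-z|\sim\dist(x,W_j)$ actually fails (though only the upper bound $|x-z|\lesssim\dist(x,W_j)$ is needed, so your inequality survives for $\alpha\le n$). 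Your route has the merit of isolating the trivial $Q$-contribution cleanly; the paper's buys simplicity by treating all of~$2Q$ uniformly through annuli adapted to the singularity at~$x$, so that no separate reflection or overlap bookkeeping is required.
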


\begin{proof}
Fix $x\in Q$. Denote $r_0=\frac72\sqrt{n}r$. In particular, $2Q\subset B(x,\frac12r_0)$.

Now put $r_j=2^{-j}r_0$ \ and \ $B_j=B(x,r_j)\setminus B(x,r_{j+1})$, \ $j\in\N$. Evidently,
\begin{equation}\label{a-est0}
2Q=\bigcup\limits_{j\in\N}\bigl(2Q\cap B_j\bigr)\end{equation}
and 
\begin{equation}\label{a-est2}
\meas(Q\cap B_j)\ge C\, r_j^n \qquad \ \forall j\in\N
\end{equation}
(here and henceforth we denote by~$C$ the general constants
depending on the parameters~$n,p,d,\alpha$ only\,). 

Since $|x-y|\sim r_j$ for $y\in B_j$, by definition of the maximal function,  it is easy to see that the estimate 
$$\int\limits_{2Q\cap B_j}\frac{g(y)}{|x-y|^{n-\al}}\,dy\le   C\,r_j^{\al}\,Mg(z)\ \ \qquad\forall z\in Q\cap B_j$$
holds for all $j\in\N$. Integrating this inequality  with respect to~$z\in Q\cap B_j$ and using~(\ref{a-est2}), we have
\begin{equation}\label{a-est3}
\int\limits_{2Q\cap B_j}\frac{g(y)}{|x-y|^{n-\al}}\,dy\le   C\,r_j^{\al-n}\int\limits_{Q\cap B_j}Mg(z)\,dz.
\end{equation}
Since $|x-z|\sim r_j$ for $z\in Q\cap B_j$, the last inequality implies
\begin{equation}\label{a-est4}
\int\limits_{2Q\cap B_j}\frac{g(y)}{|x-y|^{n-\al}}\,dy\le   C\,\int\limits_{Q\cap B_j}\frac{Mg(y)}{|x-y|^{n-\al}}\,dy.
\end{equation}
Then summing these inequalities for all $j\in\N$ and taking into account~(\ref{a-est0}), we obtain the required estimate~(\ref{a-est1}).
\end{proof}

Since $K_\al(r)\le C r^{n-\al}$, see, e.g., \cite{AdH}, from the above lemma we have immediately 

\begin{cor}\label{lem-a2} {\sl For any measurable function $g:\R^n\to\R_+$ and for every $x\in Q$ the estimate 
\begin{equation}\label{a-est8}
\int\limits_{2Q}{g(y)}\,K_\al(x-y)\,dy\le C\,\int\limits_{Q}\frac{Mg(y)}{|x-y|^{n-\al}}\,dy
\end{equation}
holds.}
\end{cor}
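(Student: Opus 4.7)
My plan is to deduce Corollary~\ref{lem-a2} as a direct consequence of Lemma~\ref{lem-a1} by first replacing the Bessel kernel with a pure power kernel via a standard pointwise estimate. The key input is the classical pointwise bound on the Bessel kernel: there is a constant $C=C(n,\alpha)$ such that $K_\alpha(|z|)\le C\,|z|^{\alpha-n}$ for all $z\neq0$. (The exponent in the text statement ``$K_\al(r)\le Cr^{n-\al}$'' appears to be a typo for $r^{\al-n}$, which is the correct singularity of the Bessel kernel at the origin; see, e.g., \cite{AdH}.) This estimate is valid globally, since the exponential decay of $K_\alpha$ at infinity is dominated by any negative power.

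Given this, for every $x\in Q$ I would write
\begin{equation*}
\int\limits_{2Q} g(y)\,K_\alpha(x-y)\,dy \;\le\; C\int\limits_{2Q}\frac{g(y)}{|x-y|^{n-\alpha}}\,dy,
\end{equation*}
which replaces the Bessel kernel by the Riesz-type kernel on the right. Then I would apply Lemma~\ref{lem-a1} to bound the last integral by
\begin{equation*}
C\int\limits_{Q}\frac{Mg(y)}{|x-y|^{n-\alpha}}\,dy,
\end{equation*}
which is exactly the right-hand side of~(\ref{a-est8}).

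There is no real obstacle here: the entire argument is a two-line reduction that combines a standard pointwise kernel estimate with the already-established Lemma~\ref{lem-a1}. The only point worth verifying carefully is that the pointwise bound $K_\alpha(r)\le Cr^{\alpha-n}$ holds uniformly in $r>0$ (and not merely for small $r$), which is standard given the exponential tail of~$K_\alpha$.
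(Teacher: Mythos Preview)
Your proposal is correct and follows exactly the same route as the paper: the paper's entire proof is the one-line remark ``Since $K_\al(r)\le C r^{n-\al}$, see, e.g., \cite{AdH}, from the above lemma we have immediately,'' and you have simply spelled this out (while also correctly noting that the exponent should read $r^{\al-n}$ rather than $r^{n-\al}$).
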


We need also 

\begin{lem}\label{lem-a3} {\sl Let $v(x)=\GG_\al(x):=\int\limits_{\R^n}g(y)\,K_\al(x-y)\,dy,$ where $K_\al$ is
the corresponding Bessel potential function. Suppose $g\in L_p(\R^n)$ for some $1<p<\infty$ and 
\begin{equation}\label{a-est5}
g(y)\equiv 0\qquad\forall y\in 2Q.
\end{equation}
Then for arbitrary positive parameter $\theta\ge 1-\al$ the estimate
\begin{equation} \label{a-est6}
\diam \bigl[v(Q)\,\bigr]\le C\,r^{1-\theta-n}\int\limits_{Q}M_{\al+\theta-1}g(y)\,dy
\end{equation}
holds.
}
\end{lem}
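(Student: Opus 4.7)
The plan is to exploit the hypothesis $g\equiv0$ on $2Q$ to bypass the local singularity of $K_\al$, then apply a mean value argument to the kernel together with a dyadic decomposition of the complement of $2Q$, controlling each annular piece by the fractional maximal function $M_{\al-1+\theta}$.

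First I would fix $x_1,x_2\in Q$ and, using $\supp g\subset\R^n\setminus 2Q$, write
\begin{equation*}
v(x_1)-v(x_2)=\int_{\R^n\setminus 2Q}g(y)\,\bigl[K_\al(x_1-y)-K_\al(x_2-y)\bigr]\,dy.
\end{equation*}
Since the segment $[x_1,x_2]$ lies in the convex set $Q$, the mean value theorem together with the pointwise bound $|\nabla K_\al(z)|\le C|z|^{\al-1-n}$ for $|z|\le1$ (and exponential decay for $|z|\ge1$, see e.g.\ \cite{AdH}) yields
\begin{equation*}
\bigl|K_\al(x_1-y)-K_\al(x_2-y)\bigr|\le C\,|x_1-x_2|\,|x-y|^{\al-1-n}
\end{equation*}
for \emph{any} $x\in Q$, because $|x^*-y|\sim|x-y|$ uniformly for $x^*,x\in Q$ and $y\notin 2Q$. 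Using $|x_1-x_2|\le\sqrt{n}\,r$, this gives
\begin{equation*}
\diam v(Q)\le C\,r\int_{\R^n\setminus 2Q}\frac{|g(y)|}{|x-y|^{n+1-\al}}\,dy.
\end{equation*}

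Next, for a fixed $z\in Q$ I would decompose $\R^n\setminus 2Q$ into dyadic annuli $A_k:=\{y:2^k r<\dist(y,Q)\le 2^{k+1}r\}$ for $k\ge 0$. On each $A_k$ one has $|x-y|\sim 2^k r$ uniformly in $x\in Q$, and $A_k\subset B(z,C_0 2^k r)$ for some absolute $C_0$. Therefore, by the very definition of the fractional maximal function,
\begin{equation*}
\int_{A_k}|g(y)|\,dy\le\int_{B(z,C_0 2^k r)}|g(y)|\,dy\le C\,(2^k r)^{\,n-(\al-1+\theta)}\,M_{\al-1+\theta}\,g(z),
\end{equation*}
which is meaningful precisely because the hypothesis $\theta\ge 1-\al$ ensures $\al-1+\theta\ge0$. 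Combining the two bounds on $A_k$ gives
\begin{equation*}
\int_{A_k}\frac{|g(y)|}{|x-y|^{n+1-\al}}\,dy\le C\,(2^k r)^{-\theta}\,M_{\al-1+\theta}\,g(z).
\end{equation*}
Summing the geometric series over $k\ge0$ (convergent since $\theta>0$, as needed for the hypothesis to be nontrivial when $\al\ge 1$), I obtain
\begin{equation*}
\int_{\R^n\setminus 2Q}\frac{|g(y)|}{|x-y|^{n+1-\al}}\,dy\le C\,r^{-\theta}\,M_{\al-1+\theta}\,g(z)\qquad\forall z\in Q.
\end{equation*}

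Finally, integrating the last inequality with respect to $z\in Q$ and dividing by $|Q|=r^n$ replaces the pointwise value $M_{\al-1+\theta}g(z)$ by the integral average, and plugging back into the mean-value bound produces exactly
\begin{equation*}
\diam v(Q)\le C\,r\cdot r^{-\theta-n}\int_Q M_{\al-1+\theta}\,g(y)\,dy=C\,r^{\,1-\theta-n}\int_Q M_{\al-1+\theta}\,g(y)\,dy,
\end{equation*}
as required. The only nonroutine point I anticipate is bookkeeping the kernel estimate for $\nabla K_\al$: one must handle both the near-origin singularity $|z|^{\al-1-n}$ and the exponential decay at infinity (which is why the condition $r\le1$ is used, so that $2^k r\ge 1$ terms carry extra decay and the sum is never worsened by the kernel's behaviour at infinity).
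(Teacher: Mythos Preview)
Your proof is correct and follows essentially the same route as the paper: a mean-value estimate on $K_\al$ reduces $\diam v(Q)$ to $Cr\int_{\R^n\setminus 2Q}|g(y)|\,|x-y|^{\al-1-n}\,dy$, which is then split into dyadic annuli, each controlled by $M_{\al-1+\theta}g$ at a point of $Q$, summed as a geometric series in $2^{-k\theta}$, and finally averaged over $Q$. The only cosmetic differences are that the paper centers $Q$ at the origin and uses annuli $B(0,2^{j+1}r_0)\setminus B(0,2^j r_0)$ rather than your $\dist(\cdot,Q)$-annuli, and it first evaluates $M_{\al-1+\theta}g$ at the center before transferring to an arbitrary $z\in Q$; your handling of the kernel's exponential decay for $|z|\ge1$ (tied to the standing assumption $r\le1$) is in fact more explicit than the paper's.
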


\begin{proof}
Let the assumptions of the lemma be fulfilled. 
Without loss of generality suppose that $Q$ is centred at the origin. Since 
\begin{equation} \label{a-est9}
C_1|y|\le |y-x|\le C_2|y|\qquad\ \ \forall x\in Q,\ \ \forall y\in\R^n\setminus 2Q,
\end{equation}
and 
\begin{equation} \label{a-est10}
K'_\al(\rho)\le C \rho^{n-\al-1}
\end{equation} (see, e.g., \cite{AdH}\,) 
it is easy to deduce that 
\begin{equation} \label{a-est11}
\diam \bigl[v(Q)\,\bigr]\le\sup\limits_{x_1,x_2\in Q}\int\limits_{\R^n\setminus 2Q}|g(y)|\,\bigl[K_\al(x_1-y)-K_\al(x_2-y)\bigr]\,dy\le 
C\,r\, \int\limits_{\R^n\setminus 2Q}\frac{|g(y)|}{|y|^{n-\al+1}}\,dy.
\end{equation} 
Fix $\theta>0$ such that 
\begin{equation} \label{a-est12}
\al+\theta-1\ge0.\end{equation} 
Put $r_0=\frac12 r$, $r_j=2^jr_0$,  and consider a sequence of sets $B_j=B(0,r_{j+1})\setminus B(0,r_j)$. By construction, 
\begin{equation}\label{a-est13}
\R^n\setminus{2Q}\subset \bigcup\limits_{j\in\N}B_j.\end{equation}
and 
\begin{equation}\label{a-est15}
\int\limits_{B_j}\frac{|g(y)|}{|y|^{n-\al+1}}\,dy\le C\, r_j^{-\theta}r_j^{\al+\theta-1}\dashint\limits_{B_j}|g(y)|\,dy\le C\,r_j^{-\theta}M_{\al+\theta-1}g(0). 
\end{equation}
Therefore, using the elementary geometrical progression formula, we obtain
\begin{equation}\label{a-est16}
\int\limits_{\R^n\setminus{2Q}}\frac{|g(y)|}{|y|^{n-\al+1}}\,dy\le C\, M_{\al+\theta-1}g(0)\sum\limits_{j=1}^\infty r_j^{-\theta}\le
C\,r^{-\theta} \,M_{\al+\theta-1}g(0).
\end{equation}
It is easy to check (using the assumption that $g\equiv 0$ on $2Q$\,) that $M_{\al+\theta-1}g(0)\le C\,M_{\al+\theta-1}g(z)$ for every $z\in Q$.  Therefore,
\begin{equation}\label{a-est17}
 M_{\al+\theta-1}g(0)\le C\,\dashint\limits_{Q}M_{\al+\theta-1}g(z)\,dz,
\end{equation}
thus 
\begin{equation}\label{a-est18}
\int\limits_{\R^n\setminus{2Q}}\frac{|g(y)|}{|y|^{n-\al+1}}\,dy\le 
C\,r^{-\theta-n} \,\int\limits_{Q}M_{\al+\theta-1}g(z)\,dz. 
\end{equation}
Finally we obtain from~(\ref{a-est11}) that 
\begin{equation} \label{a-est19}
\diam \bigl[v(Q)\,\bigr]\le\
C\,r^{1-\theta-n} \,\int\limits_{Q}M_{\al+\theta-1}g(z)\,dz \end{equation} 
as required.
\end{proof}

Using the same arguments, with some evident simplifications, we could establish also the following estimate using the Riesz potentials:

\begin{lem}\label{lem-a4} {\sl Let $v(x)=\GG_\al(x):=\int\limits_{\R^n}g(y)\,K_\al(x-y)\,dy,$ where $K_\al$ is
the corresponding Bessel potential function. Suppose that $\al>1$,\ \ $g\in L_p(\R^n)$ for some $1<p<\infty$, and 
\begin{equation}\label{a-est21}
g(y)\equiv 0\qquad\forall y\in 2Q.
\end{equation}
Then the estimate
\begin{equation} \label{a-est20}
\diam \bigl[v(Q)\,\bigr]\le C\,r^{1-n}\int\limits_{Q}I_{\al-1}|g|(y)\,dy
\end{equation}
holds, where, recall, 
$$
I_{\al-1}|g|(x):=\int_{\R^n} \frac{|g|(y)}{|x-y|^{n-\al}}\,\dd y,
$$
 is the corresponding Riesz potential of the function~$|g|$.
}
\end{lem}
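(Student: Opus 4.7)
The plan is to follow the scheme of the proof of Lemma~\ref{lem-a3} almost verbatim, with one key simplification available because $\alpha>1$: no auxiliary regularization parameter~$\theta$ is needed. Without loss of generality suppose $Q$ is centred at the origin. Since $g\equiv 0$ on $2Q$, for any $x_1,x_2\in Q$ the difference $v(x_1)-v(x_2)$ is an integral only over $\R^n\setminus 2Q$, and the mean value theorem combined with the pointwise bound $|K'_\alpha(\rho)|\le C\rho^{\alpha-n-1}$ near the origin (with exponential decay at infinity making the far contribution harmless) and the comparison $|y|\sim|y-x_i|$ on $\R^n\setminus 2Q$ yields exactly the intermediate estimate~\eqref{a-est11}:
\[
\diam\bigl[v(Q)\bigr]\le C\,r\int_{\R^n\setminus 2Q}\frac{|g(y)|}{|y|^{n-\alpha+1}}\,dy.
\]

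The simplification now enters. Rewriting $n-\alpha+1=n-(\alpha-1)$, we recognize the exponent of the Riesz kernel $I_{\alpha-1}$; since $\alpha>1$, this Riesz potential is well defined and no truncation is required. For any fixed $x\in Q$ and any $y\in\R^n\setminus 2Q$ the comparison $|y|\sim|y-x|$ gives
\[
\int_{\R^n\setminus 2Q}\frac{|g(y)|}{|y|^{n-\alpha+1}}\,dy\le C\int_{\R^n}\frac{|g(y)|}{|y-x|^{n-(\alpha-1)}}\,dy = C\,I_{\alpha-1}|g|(x).
\]

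To conclude, I would average this pointwise bound over $x\in Q$ (the left-hand side is independent of $x$), obtaining
\[
\int_{\R^n\setminus 2Q}\frac{|g(y)|}{|y|^{n-\alpha+1}}\,dy\le C\,r^{-n}\int_Q I_{\alpha-1}|g|(x)\,dx,
\]
and then multiplying by the factor $r$ coming from the first display yields the claim $\diam[v(Q)]\le C\,r^{1-n}\int_Q I_{\alpha-1}|g|(y)\,dy$. There is no genuine obstacle: the only thing to make explicit is that the strict inequality $\alpha-1>0$ allows the single exponent $\alpha-1$ to play the role of $\alpha+\theta-1$ from Lemma~\ref{lem-a3}, so the dyadic annular decomposition used there (the sets $B_j$ and their geometric progression sum) collapses into a single application of the definition of $I_{\alpha-1}$.
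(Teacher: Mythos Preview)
Your proof is correct and is precisely the ``evident simplification'' the paper alludes to: the paper gives no detailed argument for this lemma, merely stating that it follows by the same arguments as Lemma~\ref{lem-a3} with obvious changes, and your write-up makes those changes explicit. The one point worth noting is that the dyadic annular decomposition in Lemma~\ref{lem-a3} was there only to sum the geometric series $\sum_j r_j^{-\theta}$ and land on the fractional maximal function; once $\alpha>1$ you may take $\theta=0$, the geometric series disappears, and the comparison $|y|\sim|y-x|$ on $\R^n\setminus 2Q$ lets you recognise $I_{\alpha-1}|g|(x)$ directly, exactly as you do.
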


Using the established lemmas, it is very easy to finish the proof of required Lemmas~\ref{est-potent1}--\ref{est-potent2}
and  \ref{est-crit-lor}.  Indeed, 
fix $\al>0$, a cube $Q\subset\R^n$ of size~$r=\ell(Q)\le1$, and a function 
$v(x)=\GG_\al(x)=\int\limits_{\R^n}g(y)\,K_\al(x-y)\,dy$ \ with some  \ $g\in L_p(\R^n)$, \ $1<p<\infty.$

Split our
function~$v$ into the sum
\begin{equation} \label{max-ap2---} v=v_1+v_2,
\end{equation}
where
$$v_1:=\int\limits_{\R^n}g_1(y)\,K_\al(x-y)\,dy,\qquad \
v_2:=\int\limits_{\R^n}g_2(y)\,K_\al(x-y)\,dy,$$ and
$$g_1:=g\cdot 1_{2Q},\qquad\ g_2:=g\cdot 1_{\R^n\setminus 2Q}.$$

Suppose now that 
\begin{equation} \label{a-est24}
\al p>n\qquad\mbox{ and }\qquad\al>1
\end{equation}
Then from Corollary~\ref{lem-a2}, applying the Holder inequality, we have immediately
\begin{equation} \label{max-ap1} \sup\limits_{x\in Q}|v_1(x)|\le C\, \int\limits_{Q}\frac{Mg(y)}{|x-y|^{n-\al}}\,dy\le C\,\biggl(\int\limits_{Q}\bigl[Mg(y)\bigr]^p\,dy\biggr)^{\frac1p} r^{\al-\frac{n}p}
\end{equation}
as required. Further, for $v_2$ from Lemma~\ref{lem-a4} we obtain
\begin{equation} \label{a-est25}
\diam \bigl[v_2(Q)\,\bigr]\le C\,r^{1-n}\int\limits_{Q}I_{\al-1}|g|(y)\,dy.
\end{equation}
Thus under assumptions~(\ref{a-est24}) we have 
\begin{equation} \label{a-est26}
\diam \bigl[v(Q)\,\bigr]\le C\,\biggl(\|Mg\|_{L_p(Q)}\,r^{\al-\frac{n}p}+r^{1-n}\int\limits_{Q}I_{\al-1}|g|(y)\,dy\biggr)\end{equation}
as required in Lemma~\ref{est-potent1}. The remaining assertions of Lemmas~\ref{est-potent2}
and  \ref{est-crit-lor} can be proved in the same way with the following modification:
in the proof of Lemma~\ref{est-crit-lor} one has to use the generalised Holder inequality for Lorentz norms 
$$\int\limits_Q\frac{g(y)}{|y-x|^{n-\al}}\,dy\le \| g\|_{L_{p,1}}\cdot\biggl\|\frac{1_Q}{|\cdot-x|^{n-\al}}
\biggr\|_{L_{\frac{p}{p-1},\infty}}= C\, \|g\|_{L_{p,1}}$$
(see, e.g., \cite[Theorem~3.7]{Maly2}\,).

\noindent Dipartimento di Matematica e Fisica Universitа della
Campania "Luigi Vanvitelli," viale Lincoln 5,
81100, Caserta, Italy\\
e-mail: {\it Adele.FERONE@unicampania.it}

\bigskip

\noindent Sobolev Institute of Mathematics, Acad.~Koptyuga pr., 4,
and Novosibirsk State  University, Pirogova str.1, Novosibirsk, 630090, Russia \\
e-mail: {\it korob@math.nsc.ru}

\bigskip

\noindent
 Dipartimento di Matematica e Fisica Universitа della
Campania "Luigi Vanvitelli," viale Lincoln 5,
81100, Caserta, Italy\\
e-mail: {\it albaroviello@msn.com}

\end{document}
appartement nn  mm